\documentclass[11pt]{article}
\usepackage[total={6.5in, 9in}]{geometry}
\usepackage{mathrsfs}
\usepackage{t1enc}
\usepackage[latin1]{inputenc}
\usepackage[english]{babel}
\usepackage{amsmath,amsthm,amssymb,thmtools}
\usepackage{amsfonts}
\usepackage{latexsym}
\usepackage{enumerate}
\usepackage{dsfont}

\usepackage{euscript}
\usepackage{graphicx}
\usepackage[natural]{xcolor}
\usepackage{tikz}
\usepackage{tikz-3dplot}
\usetikzlibrary{shapes}
\usetikzlibrary{arrows,decorations.pathmorphing,backgrounds,positioning,fit,petri,automata}

\usepackage[
pdfauthor={},
pdftitle={},
pdfstartview=XYZ,
bookmarks=true,
colorlinks=true,
linkcolor=blue,
urlcolor=blue,
citecolor=blue,
bookmarks=true,
linktocpage=true,
hyperindex=true
]{hyperref}

\theoremstyle{plain}
\newtheorem{thm}{Theorem}[section]

\newtheorem{prop}[thm]{Proposition}
\newtheorem{lemma}[thm]{Lemma}

\newtheorem{claim}[thm]{Claim}
\newtheorem{assum}{Assumption}

\numberwithin{equation}{section}

\newcommand{\overconnect}{
  \raisebox{-0.4ex}{$\overset{\scriptscriptstyle \hspace{0.4em} >}{\scalebox{0.8}{$\fontfamily{cmex}\selectfont\hookrightarrow$}}$}
  \hspace{0.4em}
}

\newcommand{\underconnect}{
  \raisebox{-0.4ex}{$\overset{\scriptscriptstyle \hspace{0.4em} <}{\scalebox{0.8}{$\fontfamily{cmex}\selectfont\hookrightarrow$}}$}
  \hspace{0.4em}
}

\begin{document}
\title{An Ore-type condition for $H$-tilings in graphs}
\author{Yuping Gao\footnote{Lanzhou University, School of Mathematics and Statistics, Lanzhou 730000, China.
		Email: {\tt gaoyp@lzu.edu.cn}.
		Supported in part by  NSFC grant, No. 12271228 and China Scholarship Council, No. 202406180027.}
	\qquad
	Yilin Guo\footnote{Shandong University, School of Mathematics, Jinan 250100, China.
		Email:	{\tt {doudou752991@163.com}}.
		}\qquad Guanghui Wang\footnote{a. Shandong University, School of Mathematics, Jinan 250100, China.
 b. Shandong University, State Key Laboratory of Cryptography and Digital Economy Security, Jinan 250100, China.
Email: {\tt ghwang@sdu.edu.cn}. Supported in part by  NSFC grant, No. 12231018 and National Key Research and Development Program, No. 2023YFA1009603.}\qquad
Lin-Peng Zhang\footnote{Shandong University, School of Mathematics, Jinan 250100, China. Email: {\tt lpzhangmath@163.com}. Supported by the China Postdoctoral Science Foundation (No. 2025M783120), and Shandong Postdoctoral Science Foundation (No. SDZZ-ZR-202501340). }
}

\date{}
\maketitle

\begin{abstract}
A graph $G$ admits an $H$-tiling if it contains a collection of vertex-disjoint copies of $H$. In this paper, we confirm a conjecture proposed by K\"{u}hn, Osthus, and Treglown by showing that for any given graph $H$, there exists a constant $C(H)$ such that the following holds. If $G$ is a sufficiently large $n$-vertex graph satisfying $d(x) + d(y) \geq 2\left(1 - 1/\chi_{\text{cr}}(H)\right)n$ for all nonadjacent vertices $x, y \in V(G)$, then $G$ contains an $H$-tiling covering all but at most $C(H)$ vertices. Here $\chi_{\text{cr}}(H)$ denotes the critical chromatic number of $H$.
\medskip

\noindent {\textbf{Keywords}: Tiling, Ore-type condition, Regularity Lemma, Blow-up Lemma}
\medskip

\noindent {{\bf AMS Subject Classification (2020)}: \ 05C35}
\end{abstract}

\section{Introduction}\label{sec:Introduction}

\subsection{Tilings in graphs of large minimum degree}\label{subsec:md}

Given two graphs $H$ and $G$, an $H$-\emph{tiling} in $G$ is a collection of vertex-disjoint copies of $H$ in $G$. $H$-tilings are generalizations of matchings, which correspond to the case where $H$ is a single edge. An $H$-tiling is called \emph{perfect} (or an $H$-\emph{factor}) if it covers all vertices of $G$.

While Tutte's theorem provides a complete characterization for the existence of a $K_2$-factor, no analogous characterization exists for most other connected graphs $H$. Indeed, Hell and Kirkpatrick~\cite{hell1983} showed that the problem of deciding whether a graph $G$ has an $H$-factor is NP-complete precisely when $H$ contains a component with at least three vertices. Consequently, research has focused on identifying sufficient conditions that guarantee the existence of an $H$-factor. A foundational result in this direction is the Hajnal-Szemer\'{e}di theorem.

\begin{thm}[Hajnal and Szemer\'{e}di~\cite{hajnal1970}]\label{thm:HS1970Kr-tiling}
If $G$ is an $n$-vertex graph, $r\mid n$, and $\delta(G) \ge (1 - 1/r)n$, then $G$ has a $K_r$-factor.
\end{thm}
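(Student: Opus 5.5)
The plan is to follow the short proof of Kierstead and Kostochka, arguing by induction on the number of edges rather than on $n$. Equivalently, it suffices to prove the complementary statement that every $n$-vertex graph with $\Delta(G)\le r-1$ and $r\mid n$... more conveniently, the equitable colouring formulation: every graph $G'$ with maximum degree $\Delta(G')\le s-1$ has an equitable $s$-colouring. Here $G'$ is the complement of $G$ and $s=n/r$. An equitable colouring of the complement into $s$ classes of size $r$ is exactly a $K_r$-factor of $G$. Also $\delta(G)\ge(1-1/r)n$ translates to $\Delta(G')\le n/r-1=s-1$.

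So fix $G'$ with $\Delta(G')\le s-1$ and $|V(G')|=rs$. We induct on $|E(G')|$; the edgeless case is trivial. Delete an edge $xy$ and equitably colour $G'-xy$ by induction. If $x,y$ get different colours we are done. Otherwise recolour $x$: since $d(x)\le s-1$, some class $V_j$ contains no neighbour of $x$. Moving $x$ there gives a \emph{nearly equitable} colouring, with one class $V^-$ of size $r-1$, one class $V^+$ of size $r+1$, and all others of size $r$. The core of the proof is a lemma: any nearly equitable proper $s$-colouring of a graph with $\Delta\le s-1$ can be converted into an equitable one.

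To prove the lemma, build the auxiliary digraph $\mathcal H$ on the colour classes, with $V\to W$ if some vertex of $V$ has no neighbour in $W$ (so it can be moved). Let $\mathcal A$ be the set of classes that can reach $V^-$. If $V^+\in\mathcal A$, shift vertices along a path from $V^+$ to $V^-$ and finish. Otherwise let $a=|\mathcal A|$ and $b=s-a$, and let $A=\bigcup\mathcal A$, $B=V\setminus A$, so $|A|=ar-1$ and $|B|=br+1$. Every $y\in B$ has a neighbour in every class of $\mathcal A$, so $d_A(y)\ge a$.

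Next call a class $W\in\mathcal A$ \emph{terminal} if every class of $\mathcal A$ can still reach $V^-$ in $\mathcal H$ after removing $W$, and call a vertex $z\in W$ \emph{solo} if some $y\in B$ has $z$ as its unique neighbour in $W$. The counting uses the degree bound. Summing $d_A(y)\ge a$ over $B$ against $\Delta\le s-1$ forces some terminal class $W$ and some $z\in W$ to behave well: either some solo vertex $z$ has no neighbour in another suitable class (giving a recolouring that strictly enlarges $\mathcal A$ or moves a vertex across), or a swap makes $V^+$ reachable. The induction inside the lemma is on $b$, or equivalently a potential function such as $(a, \text{number of solo vertices})$ that increases lexicographically.

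I expect this counting step to be the main obstacle. It has to show that a terminal class with few solo edges exists, roughly via $|B|\cdot a\le \sum_{y\in B} d_A(y)$ compared with the $(s-1)$-bounded degrees of the vertices of $W$. It also has to organise the swap $y\leftrightarrow z$ so that the resulting nearly equitable colouring has a strictly larger accessible set. That yields a contradiction with a maximal counterexample, completing the lemma and hence the theorem.
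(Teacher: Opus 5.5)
The paper does not prove this statement. It is quoted from Hajnal and Szemer\'edi as background, so your proposal has to stand on its own. Your reduction is correct: pass to the complement, so that $\Delta(G')\le s-1$, and look for an equitable $s$-colouring. So are the edge induction, the nearly equitable colouring, and the set-up with $\mathcal{H}$, $\mathcal{A}$, $|A|=ar-1$, $|B|=br+1$, $d_A(y)\ge a$, terminal classes and solo vertices. This is the Kierstead--Kostochka framework.

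The gap is that the lemma turning a nearly equitable colouring into an equitable one, which is the whole content of the proof, is only gestured at. The mechanism you describe is also not the one that works. The basic move runs as follows. A solo vertex $z$ of a terminal class $W$, with solo neighbour $y\in Y\in\mathcal{B}$, is movable to some $X\in\mathcal{A}\setminus\{W\}$. You move $y$ into $W$ and $z$ into $X$, then shift along an $X$--$V^-$ path in $\mathcal{H}-W$; terminality is exactly what keeps that path valid after $W$ changes. This does \emph{not} enlarge the accessible family of $G'$: the deficiency simply migrates to $Y$, unless $Y=V^+$, in which case you are done. What the move buys is that $A\cup\{y\}$ is now perfectly balanced. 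The induction on $b$ must then be applied to the \emph{smaller instance} $G'[B\setminus\{y\}]$ with its $b$ classes. There $Y$ is the deficient class, hence accessible, so fewer than $b$ classes are inaccessible. This needs $\Delta(G'[B])\le b-1$, which follows from $d_A(v)\ge a$ and $\Delta\le a+b-1$. None of this recursion appears in your proposal, and without it ``induction on $b$'' has nothing to act on.

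Second, the counting only produces such a $z$ in part of the range. What must be shown is that some solo vertex of a terminal class is movable inside $\mathcal{A}$. Fix a terminal $W\ne V^-$ and let $S$ be its set of solo vertices. Suppose no vertex of $S$ is movable inside $\mathcal{A}$. Then each $z\in S$ has a neighbour in all $a-1$ other accessible classes, so $d_B(z)\le b$. Hence at most $b|S|$ vertices of $B$ have exactly one neighbour in $W$, and all others have at least two. This gives
\[
2(br+1)-b|S|\le e(W,B)\le b|S|+(s-1)(r-|S|),\quad\text{i.e.}\quad (b-a+1)(r-|S|)\le -2 .
\]
That is a contradiction only when $a\le b+1$. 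For larger $a$ it is not contradictory, so comparing $\sum_{y\in B}d_A(y)$ with the degree bound cannot finish the lemma. The remaining case needs a genuinely different argument, and you supply none.

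Your fallback, that ``a swap makes $V^+$ reachable'' or that a potential $(a,\#\text{solo vertices})$ increases, is unsupported. After a swap $y\leftrightarrow z$ the class sizes are unchanged, and $z$ is still blocked from every accessible class. One checks that a new class becomes accessible only if $z$ has a second solo neighbour $u\notin Y$, non-adjacent to $y$; such a $u$ then becomes movable into $(W\setminus\{z\})\cup\{y\}$. Forcing that kind of structure is precisely the step you yourself flag as the main obstacle. As written, the proposal is a correct plan, not a proof.
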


In the 1990s, Alon and Yuster generalized Theorem~\ref{thm:HS1970Kr-tiling} to arbitrary graphs $H$ by incorporating the chromatic number $\chi(H)$.

\begin{thm}[Alon and Yuster~\cite{alon1992}]\label{thm:AY1992}
For every $\varepsilon>0$ and for every integer $h$, there
exists $n_0=n_0 (\varepsilon, h)$ such that for every graph $H$ on $h$ vertices, any graph $G$ on $n>n_0$ vertices and $\delta(G) \ge(1 - 1/\chi(H))n$
contains at least $(1 - \varepsilon)n/h$ vertex-disjoint copies of $H$.
\end{thm}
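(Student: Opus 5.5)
The plan is to use the Regularity Lemma to reduce the problem to a fractional tiling of the reduced graph, and then to use the Hajnal--Szemer\'edi theorem (Theorem~\ref{thm:HS1970Kr-tiling}) to tile the reduced graph by cliques. Write $r=\chi(H)$ and $h=|V(H)|$.

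\textbf{Step 1: regularity and the reduced graph.} Given $\varepsilon>0$, choose $d\ll\varepsilon$ and $\varepsilon'\ll d$. Apply the degree form of Szemer\'edi's Regularity Lemma to $G$ with parameters $\varepsilon'$ and $d$. This yields an exceptional set $V_0$ with $|V_0|\le\varepsilon' n$, clusters $V_1,\dots,V_k$ of equal size $m$, and a spanning subgraph $G'$ in which every vertex $v$ satisfies $d_{G'}(v)>d_G(v)-(d+\varepsilon')n$. Let $R$ be the reduced graph on $[k]$, where $ij$ is an edge whenever $(V_i,V_j)$ is $\varepsilon'$-regular with density greater than $d$. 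The usual averaging argument shows $\delta(R)\ge(1-1/r-2d)k$.

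\textbf{Step 2: tile the reduced graph by cliques.} Theorem~\ref{thm:HS1970Kr-tiling} needs exact minimum degree $(1-1/r)k$ and $r\mid k$, which we do not have. To get around this, add a set $A$ of $\lceil 2drk\rceil+O(r)$ new vertices to $R$, each adjacent to everything, choosing $|A|$ so that $r$ divides $k+|A|$. A direct check shows the new graph has minimum degree at least $(1-1/r)(k+|A|)$, so Theorem~\ref{thm:HS1970Kr-tiling} gives a $K_r$-factor of it. Discarding the cliques that meet $A$ leaves a $K_r$-tiling of $R$ covering all but at most $r|A|=O(dr^2k)$ clusters.

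\textbf{Step 3: embed $H$ in each clique.} Fix one $K_r$ in the tiling, with clusters $V_{i_1},\dots,V_{i_r}$; every pair among them is $\varepsilon'$-regular with density greater than $d$. Take a proper $r$-colouring of $H$ and greedily embed copies of $H$ in $G$, sending colour class $j$ into $V_{i_j}$ and removing the used vertices each time. While every cluster still has at least $\varepsilon'' m$ unused vertices, the restricted pairs stay regular with density about $d$. The standard embedding (counting) lemma then supplies another copy of $H$, because $h$ is fixed and $\varepsilon'\ll d$.

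\textbf{Main obstacle: the colour classes have unequal sizes.} If used naively, the clusters are consumed at different rates, and the process stalls once one cluster falls below $\varepsilon'' m$, possibly with a constant fraction of the others unused. I would fix this by rotating the colouring: cycle through all $r$ cyclic shifts of the assignment of colour classes to the $r$ clusters, embedding copies in batches of $r$, one per shift. After each full batch every cluster has lost exactly $h$ vertices, so all clusters shrink in balance. The process then continues until each cluster has fewer than $\varepsilon'' m+h$ unused vertices.

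\textbf{Counting.} The vertices left uncovered are:
\begin{itemize}
\item $V_0$, at most $\varepsilon' n$ vertices;
\item the discarded clusters, $O(dr^2 n)$ vertices;
\item the leftovers inside the tiled cliques, about $\varepsilon'' n$ vertices.
\end{itemize}
With the constants chosen appropriately this total is at most $\varepsilon n$, so the copies of $H$ found number at least $(1-\varepsilon)n/h$ and are vertex-disjoint, as required.
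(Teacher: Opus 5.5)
The paper gives no proof of this statement; it quotes it from Alon and Yuster as background. Your argument is a correct proof, and it is essentially the standard regularity-based one. Its ingredients match machinery the paper itself uses in the balanced case (Section~\ref{sec:balancedcase}).

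\emph{Step 2.} Padding $R$ with universal vertices fixes both the $2dk$ degree deficit and the divisibility. This is exactly the device of Lemma~\ref{lemma:Kktiling}, where the paper uses the Ore-type Theorem~\ref{thm:K2008Kr-factor} instead of Theorem~\ref{thm:HS1970Kr-tiling}.

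\emph{The rotation trick.} Cyclically rotating the colour classes is the same shift the paper uses to replace $H$ by its bottle graph $\mathcal{B}(H)$. Rotating all $r$ classes amounts to tiling with $K_r(h)$, which has an $H$-factor. So you are really in the balanced situation of Lemma~\ref{lemma:BDS}.

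\emph{Step 3, where you diverge.} Because a linear leftover $\varepsilon n$ is allowed, a greedy embedding via the counting lemma on the unused parts is enough. You therefore avoid super-regularity, the Blow-up Lemma, insertion of exceptional vertices and the sink-set transfer argument. The paper needs all of these only to push the leftover down to a constant.

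Two details are worth stating explicitly:
\begin{itemize}
\item The constant hierarchy must include $\varepsilon'\ll\varepsilon''$ with $\varepsilon'/\varepsilon''\ll d$. This keeps slices of size at least $\varepsilon'' m$ regular enough for the counting lemma with $h$ fixed.
\item A new batch should start only when every cluster still has at least $\varepsilon'' m+h$ unused vertices, so no cluster drops below $\varepsilon'' m$ mid-batch.
\end{itemize}
Since $r\le h$, all constants depend only on $\varepsilon$ and $h$, which gives the required uniformity over all $H$ on $h$ vertices.

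As a side benefit, replacing Theorem~\ref{thm:HS1970Kr-tiling} by Lemma~\ref{lemma:Rdegree} together with Lemma~\ref{lemma:Kktiling} makes your argument work verbatim under the Ore-type hypothesis $d(x)+d(y)\ge 2(1-1/\chi(H))n$.
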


\begin{thm}[Alon and Yuster~\cite{alon1996}]\label{thm:AY1996H-factor}
For every $\varepsilon>0$ and for every integer $h$, there
exists $n_0=n_0 (\varepsilon, h)$ such that for every graph $H$ on $h$ vertices, any graph $G$ on $n>n_0$ vertices and $\delta(G) \ge (1 -1/\chi(H) + \varepsilon)n$, has an $H$-factor.
\end{thm}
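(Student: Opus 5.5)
The plan is to prove Theorem~\ref{thm:AY1996H-factor} by the absorbing method, combined with the almost-tiling result Theorem~\ref{thm:AY1992} and the Hajnal--Szemer\'edi theorem (Theorem~\ref{thm:HS1970Kr-tiling}). Write $r=\chi(H)$ and $h=|H|$. Fix $\varepsilon>0$ and assume $h\mid n$. Note that $H$ is a subgraph of the complete $r$-partite graph $K_r(h)$ with all parts of size $h$. Hence it is enough to find a $K_r(h)$-factor: such a factor covers $rh^2$ vertices per copy, and each copy of $K_r(h)$ splits into $rh$ disjoint copies of $H$. The divisibility $rh^2\mid n$ need not hold, so at the end we will cover a leftover of fewer than $rh^2$ vertices directly by copies of $H$, using the $\varepsilon n$ slack in the degree condition.

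\emph{Step 1 (absorbing set).} Call a vertex set $A$ of constant size an \emph{absorber} for an $h$-set $S$ if both $G[A]$ and $G[A\cup S]$ have $H$-factors. Using $\delta(G)\ge(1-1/r+\varepsilon)n$, every $r$ vertices have at least $r\varepsilon n$ common neighbours, so the copies of $K_{r}$ extend greedily. Together with supersaturation (Erd\H{o}s--Stone counting), this lets us construct absorbers as follows. For each $v\in S$, pick a copy of $H$ through a vertex $u$ that can replace $v$; for this we need $u,v$ to have many common "$(H-x)$-extensions", which holds since $u$ and $v$ have a common neighbourhood of size at least $(1-2/r+2\varepsilon)n$, and this is large enough to embed $H$ minus a vertex of a colour class. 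The outcome is that every $h$-set $S$ has $\Omega(n^{|A|})$ absorbers. A standard random selection (take each candidate with probability $\gamma n^{1-|A|}$, then delete overlaps and non-absorbers) yields a set $\mathcal{F}$ of at most $\gamma n$ vertex-disjoint absorbers such that every $h$-set has at least $\gamma^2 n$ absorbers in $\mathcal{F}$. Let $W=\bigcup\mathcal{F}$.

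\emph{Step 2 (almost cover).} $G'=G-W$ still satisfies $\delta(G')\ge(1-1/r+\varepsilon/2)|G'|$. Apply Theorem~\ref{thm:AY1992} to $G'$ with error parameter $\gamma^2/2$. This gives an $H$-tiling of $G'$ missing a set $R$ with $|R|\le\gamma^2 n/2$, and $|R|$ is divisible by $h$ because $h\mid n$ and each absorber contributes a multiple of $h$. Partition $R$ into $h$-sets and absorb them one by one into distinct absorbers of $\mathcal{F}$; this is possible since each $h$-set has at least $\gamma^2 n>|R|/h$ available absorbers. The union of these tilings is an $H$-factor.

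\emph{Main obstacle.} The hard part is Step 1, namely showing that every $h$-set has $\Omega(n^{O(1)})$ absorbers under only the Alon--Yuster degree threshold. Switching $v$ for $u$ requires a rich common neighbourhood, and when $r=2$ the bound $(1-2/r)n$ degenerates. To handle this, we instead use \emph{reachability}: declare $u\sim v$ if there are many $(h-1)$-sets $T$ with both $T\cup\{u\}$ and $T\cup\{v\}$ spanning $H$. We then show that the degree condition forces a single reachability class, chaining short paths of reachable pairs, and that reachability transitivity propagates the counts. If that fails, we fall back on the regularity-lemma/blow-up-lemma proof of Alon and Yuster: regularize, find a $K_r$-factor in the reduced graph via Theorem~\ref{thm:HS1970Kr-tiling}, and balance the clusters before applying the blow-up lemma.
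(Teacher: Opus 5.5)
The paper gives no proof of this statement. It is quoted in the introduction as background from Alon and Yuster, whose argument is based on the regularity lemma and Theorem~\ref{thm:HS1970Kr-tiling} applied in the reduced graph. That is the same spirit as the machinery the paper builds for Theorem~\ref{thm:main}.

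Your absorbing route is genuinely different, and it is viable. Regularity enters only through Theorem~\ref{thm:AY1992} as a black box, and the reserved absorbers turn its $\gamma^2 n$ leftover into an exact factor without balancing clusters or inserting exceptional vertices by hand. What it gives up is robustness without slack. In a balanced complete $r$-partite graph, two vertices in different parts have a complete $(r-2)$-partite common neighbourhood, so for $H=K_r$ no switching set exists. Accordingly, at the slack-free thresholds of Theorem~\ref{thm:KSS2001H-matching} and Theorem~\ref{thm:main} one only covers all but $C(H)$ vertices. The paper controls that leftover with the blow-up lemma and a sink-set transfer rather than with absorbers.

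As written, however, the crux of Step~1 is not established, and your ``main obstacle'' is a misdiagnosis.

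\textbf{What fails.} You cannot embed $H-x$ inside $N(u)\cap N(v)$. For $r=2$ this set can be an independent set of size $2\varepsilon n$. For $r\ge 3$ the degree condition only forces $G[N(u)\cap N(v)]$ to have minimum degree above $(1-\frac{1}{r-2})|N(u)\cap N(v)|$. So it contains copies of $K_{r-1}$, but possibly no $r$-chromatic subgraph.

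\textbf{The repair.} Only $N_H(x)$ has to land in the common neighbourhood. Take an $r$-colouring $X_1\ni x, X_2,\dots,X_r$ of $H$. Place $X_2\cup\dots\cup X_r$ in $N=N(u)\cap N(v)$, and place $X_1\setminus\{x\}$ in their common neighbourhood.
\begin{itemize}
\item Every vertex has fewer than $(1/r-\varepsilon)n$ non-neighbours and $|N|\ge(1-2/r+2\varepsilon)n$. Greedily, $N$ contains $\Omega(n^{r-1})$ copies of $K_{r-1}$, each with at least $(1/r+(r-1)\varepsilon)n$ common neighbours.
\item Erd\H{o}s' supersaturation for $r$-partite $r$-graphs turns these $\Omega(n^r)$ copies of $K_r$ into $\Omega(n^{rh})$ copies of $K_r(h)$ with $r-1$ parts inside $N$.
\item Hence there are $\Omega(n^{h-1})$ sets $T$ with $H\subseteq G[T\cup\{u\}]$ and $H\subseteq G[T\cup\{v\}]$.
\item For $r=2$ the same follows from $e(N,V(G))\ge\varepsilon n^2$ and K\H{o}v\'ari--S\'os--Tur\'an counting. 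Nothing degenerates; the apparent degeneration came from dropping the $2\varepsilon n$ term.
\end{itemize}

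So every pair is directly switchable. The absorber $F\cup T_1\cup\dots\cup T_h$, where $F=\{y_1,\dots,y_h\}$ is a copy of $H$ and $T_i$ switches $s_i$ with $y_i$, yields $\Omega(n^{h^2})$ absorbers per $h$-set. With that, your random selection and Step~2 go through.

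\textbf{What to delete.} Drop the reachability-class detour, and drop the fallback to the original proof, which is not an argument. Also drop the opening reduction to $K_r(h)$-factors. $K_r(h)$ has $rh$ vertices and splits into $r$ copies of $H$, not $rh$ copies on $rh^2$ vertices. Steps~1 and~2 never use this reduction, and ``covering the leftover directly using the slack'' is unjustified.
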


Alon and Yuster conjectured that the error terms involving $\varepsilon n$ in Theorems~\ref{thm:AY1992} and~\ref{thm:AY1996H-factor} could be replaced by constants. In~\cite{alon1992}, they remarked that this is essentially best possible. These conjectures were resolved by Koml\'{o}s, S\'{a}rk\"{o}zy, and Szemer\'{e}di.

\begin{thm}[Koml\'{o}s, S\'{a}rk\"{o}zy, and Szemer\'{e}di~\cite{komlos2001}]\label{thm:KKS2001H-factor}
For every graph $H$, there exists a constant $C$ such that if $G$ is an $n$-vertex graph with
$\delta(G) \ge (1 -1/\chi(H))n + C$,
then $G$ has an $H$-factor.
\end{thm}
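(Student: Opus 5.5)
The plan follows the regularity/blow-up strategy, combined with a stability analysis near the extremal configurations. Set $r=\chi(H)$ and $h=|V(H)|$. By Theorem~\ref{thm:AY1996H-factor} the statement holds with an $\varepsilon n$ surplus, so the real work is removing that surplus when $\delta(G)\ge(1-1/r)n+C$. I would split into a \emph{non-extremal case} and an \emph{extremal case}. $G$ is $\beta$-extremal if it contains a set $A$ with $|A|\approx n/r$ spanning $o(n^2)$ edges (near-independent), or if it is close to the disconnected or parity-type obstruction. In the main example, the complete $r$-partite graph with slightly unbalanced parts, the only obstruction is divisibility. So the extremal case is exactly where $G$ looks like $K_{r}$-blown-up with parts $V_1,\dots,V_r$ of size $\approx n/r$.

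\textbf{Non-extremal case.} I would apply the Regularity Lemma to get a reduced graph $R$ with $\delta(R)\ge(1-1/r-2d)|R|$. Using non-extremality, I would show that $R$ has an almost perfect $K_r$-tiling, even a perfect one after discarding few clusters (Theorem~\ref{thm:HS1970Kr-tiling} applied to $R$ gives it directly up to rounding). I would also show a \emph{connectivity/absorbing} property: the $K_r$'s of the tiling are linked by chains in which a vertex may be shifted between clusters. Non-extremality gives enough ``cross'' $K_{r+1}^-$ structures for this. Before taking the tiling, I would greedily cover the exceptional vertices $V_0$ and vertices with atypical neighbourhoods by disjoint copies of $H$. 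Each such vertex has degree $\ge(1-1/r)n$, so it lies in a copy of $H$ using otherwise typical clusters. I would then rebalance cluster sizes so that each regular $r$-tuple has part sizes in the exact ratio admitting a perfect $H$-tiling of the complete $r$-partite graph. Removing a few copies of $H$ across the linked chains achieves this. Finally the Blow-up Lemma embeds an $H$-factor in each super-regular $r$-tuple.

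\textbf{Extremal case.} Here I would take the near-partition $V_1,\dots,V_r$ and move misplaced vertices to the part where they have fewest neighbours, so that every vertex has few neighbours in its own part. Then I would handle ``bad'' vertices and the surplus edges inside parts. The $+C$ in the degree condition guarantees that whenever some $|V_i|$ exceeds $n/r$, the part $V_i$ contains enough disjoint edges or small stars. Copies of $H$ using two vertices of one part in a colour class can then correct the sizes until they are divisible and balanced appropriately for $H$. After that, each vertex is adjacent to almost all of the other parts, and a direct embedding (or the Blow-up Lemma on the $r$-partite graph, which is super-regular) finishes the proof.

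\textbf{Main obstacle.} The delicate step is the exact balancing in the extremal case when $H$ has colour classes of unequal sizes. The required ratio of part sizes depends on the colourings of $H$, and the constant $C$ must supply enough intra-part edges to fix divisibility. I would first try a counting argument: if $|V_i|=n/r+s$, then $\delta\ge(1-1/r)n+C$ forces $\Omega(s+C)$ disjoint edges inside $V_i$. Each such edge can be absorbed into one copy of $H$ that shifts the class-size vector by a unit. Chaining these shifts, via $\gcd$ properties of the colour-class sizes over all optimal colourings, should reach an admissible vector with $C=C(H)$ bounded.
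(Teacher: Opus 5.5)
The paper gives no proof of this theorem; it only quotes it from \cite{komlos2001}, so I am judging your outline on its own. Note also that the statement tacitly needs $|H|\mid n$, and your plan should assume this explicitly. The stability strategy is reasonable, but the extremal case is mis-described, and this is the main gap. By your own definition, $G$ is extremal as soon as it has a \emph{single} near-independent set $A$ with $|A|\approx n/r$. That does not make $G$ look like a blown-up $K_r$. The remainder only inherits $\delta(G-A)\gtrsim(1-\frac{1}{r-1})|G-A|$. It may be non-extremal, contain further sparse sets, or (when two colours remain) be close to two disjoint cliques.

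For example, take $r=3$ and $|A|=n/3$ with $G[A]$ sparse of minimum degree $C$. Join $A$ completely to $B=V\setminus A$, and let $G[B]$ be quasirandom of density $\frac12+\eta$. This graph satisfies the degree condition and is extremal in your sense. Yet there is no partition $V_1,V_2,V_3$ with few edges inside the parts. So cleaning the parts, counting intra-part edges and applying one Blow-up to an $r$-partite graph cannot work.

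What is missing is the mixed case. Peel off sparse sets $A_1,\dots,A_j$. Tile the rest by copies of an $(r-j)$-chromatic piece of $H$, using the non-extremal machinery for $r-j$ colours, or a two-clique analysis when $r-j=2$. Each copy should have a large common neighbourhood in $A_1\cup\dots\cup A_j$. Then balance the sizes $|A_i|$ and complete the copies via Hall's theorem. This is what Subsection~\ref{subsec:II} does for the paper's own theorem.

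Two further steps fail as written. First, the balancing device in your last paragraph: an edge inside an oversized part $V_i$ need not let you put an extra vertex of $V_i$ into a copy of $H$. For $H=K_r(t)$ with $t\ge 2$ (already for $C_4$), a copy with $t+1$ vertices in $V_i$ needs $K_{a,b}\subseteq G[V_i]$ with $a,b\ge 1$ and $a+b\ge t+1$, e.g.\ a $t$-star. A matching does not give this. The fix is at hand. If $|V_i|=n/r+s$, every vertex of $V_i$ has at least $s+C$ neighbours inside $V_i$. Together with the small maximum degree after cleaning, this yields many disjoint $t$-stars, as in Proposition~\ref{prop:nui}.

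Second, in the non-extremal case Theorem~\ref{thm:HS1970Kr-tiling} does not apply to $R$. Here $\delta(R)$ may lie about $2d|R|$ below the threshold, so $\Theta(d|R|)$ clusters may have to be discarded. Together with $V_0$, this means inserting exceptional vertices creates an imbalance linear in the cluster size, which ``a few copies of $H$'' cannot repair. When every $r$-colouring of $H$ is balanced (e.g.\ $H=K_r(t)$), a super-regular $r$-tuple has no internal slack. All of that imbalance must then be exported between tuples. The lemma that non-extremality yields a shift structure robust enough to carry linearly many vertices is the heart of this case, and you only assert it.
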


\begin{thm}[Koml\'{o}s, S\'{a}rk\"{o}zy, and Szemer\'{e}di~\cite{komlos2001}]\label{thm:KSS2001H-matching}
Given the conditions of Theorem~\ref{thm:KKS2001H-factor}, if
$\delta(G) \ge (1 -1/\chi(H))n$,
then $G$ has an $H$-tiling that covers all but at most $C$ vertices.
\end{thm}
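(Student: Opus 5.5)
Theorem~\ref{thm:KSS2001H-matching} concerns a graph $G$ with $\delta(G)\ge (1-1/\chi(H))n$, where $r:=\chi(H)$ and $h:=|V(H)|$. The plan is to reduce it to Theorem~\ref{thm:KKS2001H-factor} by adding a bounded number of universal vertices.

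Let $C_0$ be the constant from Theorem~\ref{thm:KKS2001H-factor} for the graph $H$. Fix an integer $t$ with $t\ge rC_0$, and add $t$ new vertices, each adjacent to every other vertex; call the result $G'$. It has $n'=n+t$ vertices. The new vertices have degree $n'-1$. An old vertex $v$ has degree
\[
d_{G'}(v)\ge (1-1/r)n+t=(1-1/r)n'+t/r\ge (1-1/r)n'+C_0 .
\]
Hence $\delta(G')\ge (1-1/r)n'+C_0$. To make the divisibility condition hold, also choose $t$ (within a window of length $h$) so that $h\mid n'$; if the version of Theorem~\ref{thm:KKS2001H-factor} in use already includes this divisibility hypothesis, this choice is exactly what is needed. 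Theorem~\ref{thm:KKS2001H-factor} then gives an $H$-factor $\mathcal{F}$ of $G'$.

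Next, delete from $\mathcal{F}$ every copy of $H$ that meets one of the $t$ added vertices. At most $t$ copies are removed, and they cover at most $th$ vertices in total. The remaining copies lie entirely in $G$ and are vertex-disjoint. Every vertex of $G$ was covered by $\mathcal{F}$, so the vertices of $G$ left uncovered are among the at most $th$ removed ones. We may therefore take $C:=th\le (rC_0+h)h$, which depends only on $H$.

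The step that needs care is the degree computation. Adding $t$ vertices raises each old degree by exactly $t$, while the required threshold $(1-1/r)n'$ rises only by $(1-1/r)t$. The surplus $t/r$ must absorb the additive constant $C_0$, which is why $t$ grows linearly in $r$. The argument uses only Theorem~\ref{thm:KKS2001H-factor} as a black box and places no largeness requirement on $n$: for small $n$ the bound $C\ge n$ holds trivially anyway, once $C$ is enlarged if necessary.
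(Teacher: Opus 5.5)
Your argument is correct, but the paper has no proof of this statement to compare it with: the result is quoted from Koml\'os, S\'ark\"ozy and Szemer\'edi as background. Your reduction pads $G$ with $t$ universal vertices, so that the surplus $t/\chi(H)$ in the old degrees absorbs the additive constant $C_0$ and $h\mid n'$. You then apply the factor theorem and discard the at most $t$ copies of $H$ that meet new vertices. This is exactly the device the paper itself uses in Lemma~\ref{lemma:Kktiling}, where $ks+r$ universal vertices are added to get an almost-perfect $K_k$-tiling out of Theorem~\ref{thm:K2008Kr-factor}.

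One check is missing from your degree computation. To conclude $\delta(G')\ge (1-1/r)n'+C_0$, the new vertices must also meet the bound, i.e.\ $n'-1\ge (1-1/r)n'+C_0$, which amounts to $n'\ge r(C_0+1)$. This holds once $n\ge r$, or outright if you take $t\ge r(C_0+1)$, and small $n$ is trivial as you say.

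Two smaller remarks. The uncovered count is in fact at most $t(h-1)$. If one reads the statement as requiring the same constant as Theorem~\ref{thm:KKS2001H-factor}, take $C=th\ge C_0$; the factor theorem stays true with a larger additive constant, so one constant serves both.

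The same reasoning explains why the paper cannot use this shortcut for its main theorem. At the critical-chromatic threshold there is in general no factor theorem to pad into: by K\"uhn--Osthus, the perfect-tiling threshold may be governed by $\chi(H)$ rather than $\chi_{\mathrm{cr}}(H)$. That is why a direct Shokoufandeh--Zhao-style argument is needed there.
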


K{\"u}hn and Osthus~\cite{kuhn2006, kuhn2009} later showed that for any graph $H$, either the standard chromatic number or the critical chromatic number is the relevant parameter. The \emph{critical chromatic number} $\chi_{cr}(H)$ of a graph $H$ is defined as
\[
\chi_{cr}(H)= (\chi(H) - 1)\frac{|H|}{|H| - \sigma(H)},
\]
where $\sigma(H)$ denotes the size of the smallest color-class in any $\chi(H)$-coloring of $H$. Throughout this paper, we consider only graphs $H$ containing at least one edge (without explicit mention), ensuring $\chi_{cr}(H)$ is well-defined. Note that $\chi(H)-1 < \chi_{cr}(H) \leq \chi(H)$ for all graphs $H$, with equality $\chi_{cr}(H)=\chi(H)$ holding precisely when every $\chi(H)$-coloring of $H$ has color-classes of equal size.

In the context of almost perfect tilings, Koml\'{o}s was the first to demonstrate the relevance of the critical chromatic number.

\begin{thm}[Koml\'{o}s~\cite{komlos2000}]\label{thm:K2000H-matching}
For any graph $H$ and $\varepsilon > 0$, there exists a threshold $n_0 = n_0(H, \varepsilon)$ such that if $n \ge n_0$ and an $n$-vertex graph $G$ satisfies $\delta(G) \ge (1 - 1/\chi_{cr}(H))n$,
then $G$ contains an $H$-tiling that covers all but at most $\varepsilon n$ vertices.
\end{thm}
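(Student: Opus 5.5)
We prove Koml\'{o}s' theorem with the Regularity Lemma, reducing the problem to a fractional tiling problem in the reduced graph. Write $r=\chi(H)$, $h=|H|$ and $\sigma=\sigma(H)$. Let $B^*$ be the complete $r$-partite graph with one part of size $(r-1)\sigma$ and $r-1$ parts of size $h-\sigma$. Then $B^*$ has an $H$-factor: take $h-\sigma$ copies of $H$ and rotate which colour class plays the small role. Moreover $1-1/\chi_{cr}(H)$ is precisely the relative size of the $r-1$ large parts, i.e.\ $(r-1)(h-\sigma)/|B^*|$. Our aim is therefore to find an almost perfect $B^*$-tiling in $G$ (say with appropriately scaled blown-up parts), and then tile each blown-up copy with $H$.

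\textbf{Step 1 (regularity).} Apply the degree form of the Regularity Lemma with parameters $\varepsilon' \ll d \ll \varepsilon$. This gives a reduced graph $R$ on $k$ clusters with $\delta(R)\ge (1-1/\chi_{cr}(H)-2d)k$.

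\textbf{Step 2 (fractional tiling in $R$; the main obstacle).} We show that $R$ has a fractional $B^*$-tiling, using weighted copies of bottle-type graphs $K_r$ with unequal part weights in ratio $(r-1)\sigma:(h-\sigma):\dots:(h-\sigma)$, that covers all but $\varepsilon k/2$ of the cluster weight. The natural route is Koml\'{o}s' own argument:
\begin{itemize}
\item Take a maximum-weight tiling by such weighted $K_r$'s.
\item Suppose more than $\varepsilon k$ weight is uncovered. The degree condition, at the critical density, implies that some uncovered cluster together with a few tiles sends many edges into those tiles.
\item This allows a local reconfiguration: several tiles plus the uncovered vertex are replaced by tiles of strictly larger total weight. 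Each exchange must gain a fixed amount, of order $\varepsilon^2$.
\end{itemize}
An alternative for this step is LP duality. A fractional tiling covering all but $\varepsilon k$ exists unless there is a weighting $w$ on $V(R)$ with $w(\text{every weighted copy})\le 1$ and total weight large. We would then try to contradict this dual witness using the minimum degree, via a greedy construction of a heavy copy of $K_r$ with unequal part weights. I would try the duality route first. The delicate point is exactly the threshold $1-1/\chi_{cr}$, which leaves only $O(d)$ slack, so the augmentation must be quantitative.

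\textbf{Step 3 (back to $G$).} Pass from the fractional tiling to an integral one by splitting each cluster into subclusters whose sizes are proportional to the weights. Each weighted $K_r$ then corresponds to $r$ subclusters that are pairwise $\varepsilon'$-regular with density at least $d$, and whose sizes are in the ratio of $B^*$'s parts. Inside each such $r$-tuple, repeatedly embed copies of $B^*$ (hence $H$), using the Key Lemma/Blow-up Lemma or just greedy embedding via regularity, until only $\varepsilon' $-fractions remain. The uncovered vertices are then:
\begin{itemize}
\item the exceptional set,
\item the vertices lost in the cleaning stage of the Regularity Lemma,
\item the uncovered cluster weight from Step 2,
\item the remnants left inside each $r$-tuple.
\end{itemize}
With the chosen parameter hierarchy these total at most $\varepsilon n$, which gives the theorem.
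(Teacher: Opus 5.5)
The paper does not prove this statement: it is quoted from Koml\'os, and it also follows from the paper's main theorem, since $\delta(G)\ge(1-1/\chi_{cr}(H))n$ implies the Ore-type condition. Your Steps 1 and 3 are the standard regularity reduction and are fine. The genuine gap is Step 2, which is the entire content of the theorem: you state what is needed and name two strategies, but carry out neither.

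The difficulty sits exactly where your sketch is silent. Give a tile part weights $(\alpha,1,\dots,1)$ with $\alpha=(r-1)\sigma/(h-\sigma)$. A cluster adjacent to every part except one large part then has weighted degree ratio $(r-2+\alpha)/(r-1+\alpha)=1-1/\chi_{cr}(H)$, i.e.\ exactly the threshold. So the degree condition allows every uncovered cluster to see every tile this way, and inserting it into a tile then only swaps it for the missed part, with no gain. Any augmentation must therefore combine several clusters at once (in the situation just described, for instance, the leftover degree of the uncovered clusters forces a $K_r$ among them), with exact weighted accounting. Your bullets neither identify such configurations nor show that one must exist, and the claimed gain of order $\varepsilon^2$ per exchange is unsupported. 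On the LP side you likewise give no argument that the minimum degree contradicts a dual witness. Stated correctly for copies normalized to total weight $1$, such a witness is some $w\le\mathbf{1}$ with $w(T)\le 0$ for every weighted copy $T$ and $\sum_v w_v>\varepsilon k$; the pointwise cap is essential.

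For comparison, the paper's machinery fills exactly this step, working with actual cliques rather than LP duality or iterated augmentation; in its notation $k=\chi(H)$ and $\ell=|V(R)|$. It takes a lexicographically maximal cover of $R$ by cliques of orders $k,\dots,1$. Maximality forces $e(K^i,K^j)\le i(j-1)$ for $i\le j<k$ and $e(\bigcup_{p=1}^k K^i_p,K^k)\le ik(k-1)$ (Proposition~\ref{prop:connectivity}). Together with the degree condition this gives $\varphi_k\ge\sum_{i\ge2}(i-1)\varphi_{k-i}+(k-1)\gamma-O(d)$ (Proposition~\ref{prop:varphik}) and a lower bound on the number of $k$-cliques well-connected to each smaller clique (Proposition~\ref{prop:lambdai}). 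These bounds are precisely what is needed (the quantities $I_i$ in the proof of Lemma~\ref{lem:DS}) to absorb every smaller clique, by slicing well-connected $k$-cliques, into regular $k$-cliques with part ratio $(1,\dots,1,\alpha')$; this is your bottle tiling. For an $\varepsilon n$ leftover you may take $\alpha'=\alpha$ and discard the $O(d)\ell$ smaller cliques that cannot be absorbed when only $I_i\ge -O(d)$ holds, so the extremal analysis of Section~\ref{sec:excase} is unnecessary. When $\chi_{cr}(H)=\chi(H)$, Hajnal--Szemer\'edi applied to $R$ suffices instead.

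Two slips in your setup also need fixing. First, $|B^*|=(r-1)h$, so $B^*$ is tiled by $r-1$ copies of $H$, not $h-\sigma$. The $\sigma$-class always goes in the neck and the remaining $r-1$ classes are shifted cyclically among the large parts; rotating which class sits in the neck overfills it unless all classes have equal size. Second, $(r-1)(h-\sigma)/|B^*|=1-\sigma/h$, not $1-1/\chi_{cr}(H)$. It is $1/\chi_{cr}(H)=(h-\sigma)/|B^*|$ that is the share of a single large part. Hence the threshold is the degree ratio of a large-part vertex in a blow-up of $B^*$, which is exactly the source of the obstruction described above.
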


Koml\'{o}s also conjectured that the number of uncovered vertices can be reduced to a constant depending only on $H$. This was proved by Shokoufandeh and Zhao.

\begin{thm}[Shokoufandeh and Zhao~\cite{zhao2003}]\label{thm:SZ2003H-matching}
For any $k$-chromatic graph $H$ on $h$ vertices with smallest color-class of
order $\sigma$, there exists an $n_0$ such that, for all $n\geq n_0$, if $G$ is an $n$-vertex graph with
$\delta(G) \ge (1 - 1/\chi_{cr}(H))n$,
then $G$ contains an $H$-tiling that covers all but at most $\dfrac{5(k-2)(h-\sigma)^2}{\sigma(k-1)}$ vertices of $G$.
\end{thm}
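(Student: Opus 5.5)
The plan follows the regularity/blow-up framework of Komlós, with the constant-error refinement carried out by a careful treatment of the leftover. Set $k=\chi(H)$, $h=|H|$, $\sigma=\sigma(H)$, so $\chi_{cr}(H)=(k-1)h/(h-\sigma)$. Call $B^*$ the complete $k$-partite graph with $k-1$ parts of size $h-\sigma$ and one part of size $\sigma$; it has $(k-1)h$ vertices and critical chromatic number $\chi_{cr}(H)$. Note that $B^*$ has a perfect $H$-tiling: take $h$ copies of $B^*$-blocks suitably rotated, or more simply, a blow-up of $B^*$ by a factor divisible by $h$ has an $H$-factor.

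\textbf{Step 1 (reduced graph).} I will apply the degree form of the Regularity Lemma with parameters $\varepsilon\ll d\ll 1/h$. This gives a reduced graph $R$ on $m$ clusters with $\delta(R)\ge (1-1/\chi_{cr}(H)-2d)m$. Komlós' fractional tiling argument (Theorem~\ref{thm:K2000H-matching}, in its reduced-graph form) then yields a $B^*$-tiling of $R$ covering all but $\theta m$ clusters. Inside each copy of $B^*$ in $R$, I will split the clusters into pieces of sizes proportional to $h-\sigma$ and $\sigma$ and make the pairs super-regular. The Blow-up Lemma then tiles almost all vertices of those clusters with copies of $H$. At this point $G$ has an $H$-tiling missing at most $\sqrt{\theta}\,n$ vertices, which recovers Theorem~\ref{thm:K2000H-matching}.

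\textbf{Step 2 (absorbing the leftover, down to a constant).} This is the main obstacle. A leftover of size $\varepsilon n$ cannot be absorbed for free, because the degree condition is exact: no slack is available. I would follow Shokoufandeh--Zhao's scheme and split into cases according to whether $G$ is \emph{extremal}, meaning close to the complete $k$-partite graph with parts $(h-\sigma)n/((k-1)h)$ and the $\sigma$-part sizes. The two cases are handled as follows.
\begin{enumerate}
\item In the non-extremal case, a stability argument should produce extra structure in $R$: either a slightly larger fractional $B^*$-tiling, or a few copies of a ``switching'' graph (a $B^*$ with one part enlarged by one). These let me redistribute leftover vertices into the blown-up $B^*$'s, each copy correcting cluster-size imbalances by one vertex. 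Iterating, every leftover vertex $v$ is inserted into some super-regular structure where $v$ has many neighbours in $k-1$ of the parts. This is guaranteed since $\deg(v)\ge(1-1/\chi_{cr})n$ exceeds the $(k-2)$-partite threshold. Divisibility corrections then leave only $O(1)$ vertices.
\item In the extremal case, I work directly in $G$. I partition $V(G)$ into near-parts $V_1,\dots,V_k$ and move atypical vertices (those with few neighbours in some other part) greedily into copies of $H$ that use them. I then tile the nearly complete $k$-partite remainder by hand. The number of vertices left over is governed by the part-size divisibility, which is where the explicit bound $5(k-2)(h-\sigma)^2/(\sigma(k-1))$ arises. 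Concretely, at most $(h-\sigma)$-sized residues in each of the $k-2$ surplus parts remain, scaled by the ratio of leftover per copy.
\end{enumerate}

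I expect the extremal case to be the hardest part. Its atypical-vertex cleaning must be organised so that every imbalance among the part sizes is corrected by using copies of $H$ with different colour-class assignments, with the $\sigma$-class placed in different parts. The constant counting is done at exactly this point.
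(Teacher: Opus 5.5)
The decisive gap is in Step~2: nothing in your plan moves leftover vertices \emph{between} different blown-up copies of $B^*$. Inserting exceptional vertices and fixing divisibility inside each copy can leave up to order $h$ vertices per copy. That is order $h\ell$ overall, with $\ell\le M(\varepsilon)$: a tower-type quantity in $1/\varepsilon$, nowhere near $\frac{5(k-2)(h-\sigma)^2}{\sigma(k-1)}$. The missing idea is the transfer argument the paper uses, following Shokoufandeh--Zhao (Phase~II in the proof of Lemma~\ref{lem:generalcase}). First, cut each clique's leftover to $O(k\omega)$ by changing which cluster hosts the $\sigma$-class of some copies (Proposition~\ref{prop:R'}). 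Then take the digraph $\mathcal{D}$ on clusters with an arc $(U,U_1)$ whenever $U$ is adjacent in $R$ to every cluster of $U_1$'s clique other than $U_1$. The degree condition gives every cluster out-degree at least $(\alpha-O(d))|V(\mathcal{D})|/k$, so Lemma~\ref{lem:sinkset} yields a sink set of $O_k(1/\alpha)$ clusters. All leftovers are then pushed along directed paths into it using the Blow-up Lemma. For $\mathcal{B}(H)$ the width is $\omega=h-\sigma$ and $\alpha=(k-1)\sigma/(h-\sigma)$, so the stated bound is exactly $5(k-2)\,\omega/\alpha$, i.e.\ leftover per sink cluster times the number of sink clusters. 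It is not a divisibility count in the extremal case, and your account of it there is reverse-engineered. Relatedly, the slack $\alpha'>\alpha$ needed for both the insertion of exceptional vertices and the $\sigma$-class swaps is only asserted (``a stability argument should produce''). The argument the paper adapts derives it from a lexicographically maximal $k$-clique cover of $R$, the identity $\varphi_k=\sum_{i\ge 2}(i-1)\varphi_{k-i}+(k-1)\gamma+s$, and the Decomposition Lemma~\ref{lem:DS} when $s\ge\mu$.

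Your case split also has a hole: being far from the complete $k$-partite extremal graph does not give slack. Let $H=K(\sigma,\omega,\omega)$ with $\sigma<\omega$ and $\alpha=\sigma/\omega$. Let $G$ be the join of an independent set $I$ with $|I|=n/(2+\alpha)$ and a quasirandom graph of density slightly above $\alpha/(1+\alpha)$ on the other vertices. Then $\delta(G)\ge(1-1/\chi_{cr}(H))n$ and $G$ is far from your extremal graph. Yet every copy of $H$ meets $I$ in at most $\omega$ vertices and uses at least $\sigma+\omega$ others, while $|I|:|V(G)\setminus I|=\omega:(\sigma+\omega)$. So the tiling is forced to be tight, and no tiling by $(\alpha',1,1)$-shaped blow-ups with $\alpha'>\alpha$ can cover $I$; your non-extremal branch would have to rest on the unexplained ``switching'' alternative. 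In the paper's adaptation this is the extremal subcase $s<\mu$ with $t<k-1$ (Subsection~\ref{subsec:II}); for this $G$ one gets $s\approx 0$ and $t=1$. There only $t$ near-independent sets $U_1,\dots,U_t$ are found. The rest is tiled by the $(k-t)$-chromatic bottle graph, which does have slack, and those copies are matched to copies of $K_t(\omega)$ in $U_1\cup\dots\cup U_t$ via Hall's theorem. Finally, fix the setup: the bottle graph needs neck $(k-1)\sigma$, not $\sigma$. With parts $(\sigma,h-\sigma,\dots,h-\sigma)$ it has $(k-1)h-(k-2)\sigma$ vertices and critical chromatic number $k-1+\sigma/(h-\sigma)<\chi_{cr}(H)$ for $k\ge 3$. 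Moreover, no blow-up of it has an $H$-factor, because each copy of $H$ puts at least $\sigma$ vertices into every part.
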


\subsection{Ore-type conditions for tilings in graphs}\label{subsec:ored}

Beyond the minimum-degree conditions discussed above, it is natural to seek other degree-based criteria that force the existence of $H$-tilings. One of the most important such criteria is the \emph{Ore-type} condition, which gives a lower bound on the sum of the degrees of any two nonadjacent vertices. This notion originates from Ore's theorem~\cite{ore1960}, which asserts that a graph $G$ on $n \ge 3$ vertices contains a Hamiltonian cycle whenever
$d(x) + d(y) \ge n$
for every pair of nonadjacent vertices $x,y\in V(G)$.
A notable consequence of Kierstead and Kostochka's work on equitable colorings~\cite{kierstead2008} yields an Ore-type refinement of Theorem~\ref{thm:HS1970Kr-tiling}.

\begin{thm}[Kierstead and Kostochka~\cite{kierstead2008}]\label{thm:K2008Kr-factor}
	If $G$ is an $n$-vertex graph with $ r \mid n $ and $d(x) + d(y) \ge 2(1 - 1/r)n - 1$ for all nonadjacent $x, y \in V(G)$, then $G$ contains a $K_r$-factor.
\end{thm}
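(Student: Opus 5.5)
The plan is to translate the statement into an equitable-colouring problem on the complement and prove the colouring statement by the Kierstead–Kostochka recolouring method.

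\textbf{Reduction.} Let $\overline{G}$ be the complement of $G$, let $k=n/r$, and write $\bar d(x)=n-1-d(x)$. A $K_r$-factor of $G$ is exactly a partition of $V$ into $k$ sets of size $r$, each independent in $\overline{G}$, i.e.\ an equitable $k$-colouring of $\overline{G}$. For every edge $xy$ of $\overline{G}$ the hypothesis gives
\[
\bar d(x)+\bar d(y)=2n-2-(d(x)+d(y))\le 2n-2-2n+2k+1=2k-1 .
\]
So it suffices to prove the Ore-type equitable colouring theorem:

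\emph{($\ast$) If $F$ is a graph with $d(x)+d(y)\le 2s+1$ for every edge $xy$, then $F$ has an equitable $(s+1)$-colouring.}

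Apply ($\ast$) with $s=k-1$. Since $k\mid n$, the colour classes then all have size exactly $r$.

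\textbf{Proof of ($\ast$) by induction on $|E(F)|$.} We may assume $s+1$ divides $|F|$, by adding isolated vertices, which only helps. If $F$ has no edges, the statement is trivial. Otherwise pick an edge $xy$ with $d(x)$ maximum. By induction $F-xy$ has an equitable colouring, and if $x,y$ receive different colours we are done. If not, we recolour $x$. The degree-sum condition gives $d(x)\le s$ whenever $x$ has a neighbour of small degree, and we have to handle the asymmetric case where one endpoint has large degree; choosing $x$ carefully handles this. Then $x$ has a colour class $W$ with no neighbour of $x$. Moving $x$ to $W$ produces a nearly equitable colouring: one class $V^-$ of size $r-1$, one class $V^+$ of size $r+1$, and all others of size $r$.

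\textbf{Balancing a nearly equitable colouring.} We follow Kierstead–Kostochka. Build an auxiliary digraph on the colour classes with an arc $X\to Y$ whenever some $z\in X$ has no neighbours in $Y$, so that $z$ can move to $Y$. Let $\mathcal A$ be the family of classes that can reach $V^-$. If $V^+\in\mathcal A$, shifting vertices along a path ends with an equitable colouring. Otherwise let $\mathcal B$ be the remaining classes, and set $a=|\mathcal A|$, $b=|\mathcal B|$. Every vertex $y\in B=\bigcup\mathcal B$ has a neighbour in each class of $\mathcal A$, so
\[
e(A,B)\ge a|B| .
\]
A \emph{terminal} class $Z\in\mathcal A$ is one whose removal keeps the rest of $\mathcal A$ accessible. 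For such $Z$ we look at \emph{solo} vertices $z\in Z$ with exactly one neighbour $w$ in some class of $\mathcal B$, and we swap $z$ into $\mathcal B$ and $w$ out. This either enlarges $\mathcal A$ or moves the surplus to an accessible class. Iterating, with induction on $b$ (or on a potential function such as $a$), finishes the argument.

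\textbf{Main obstacle.} The crux is the counting step showing that a useful solo vertex exists. In the minimum-degree version one bounds $e(A,B)$ from above by $|A|\,\Delta$. Here degrees are only controlled through edge sums, so low-degree and high-degree vertices must be treated separately. First I would show that vertices of degree greater than $s$ form an independent set, because two adjacent ones would violate $d(x)+d(y)\le 2s+1$. Then I would double-count edges between a terminal class $Z$ and $B$, pairing each edge's endpoints via the Ore bound to recover an average-degree estimate of $s$. This should show that some vertex of $Z$ has at most one neighbour in some class of $\mathcal B$. Keeping this pairing argument consistent when the high-degree vertices all sit in $\mathcal B$ or all in $\mathcal A$ is the delicate part. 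If it fails, I would fall back on Kierstead–Kostochka's original discharging argument, in which each high-degree vertex is charged to its low-degree neighbours.
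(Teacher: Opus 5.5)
\paragraph{Verdict.} The reduction is correct, but the self-contained proof of $(\ast)$ has a genuine gap at exactly the step that carries the content of the theorem.

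\paragraph{What works.} The paper does not prove this statement; it quotes it from Kierstead and Kostochka. Your reduction is exactly how their theorem gives it. With $k=n/r$, every edge $xy$ of $\overline G$ satisfies $\bar d(x)+\bar d(y)\le 2k-1=2(k-1)+1$. An equitable $k$-colouring of $\overline G$ has all classes of size $r$ because $k\mid n$, so it is a $K_r$-factor of $G$. Your $(\ast)$ is literally Kierstead and Kostochka's Ore-type equitable colouring theorem. The reduction plus that citation is therefore a complete argument at the level of the paper.

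Two small repairs are needed:
\begin{itemize}
\item Recolour the endpoint of $xy$ of degree at most $s$, which exists by the edge-sum bound, not the endpoint of maximum degree.
\item Padding with isolated vertices is unsafe, because several of them may land in one class. Pad with a disjoint clique on at most $s$ vertices instead. Here $k\mid n$, so no padding is needed anyway.
\end{itemize}

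\paragraph{The gap.} Turning a nearly equitable colouring into an equitable one under an edge-sum hypothesis is the whole content of their paper, and your outline stops precisely there.

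\emph{First, your exchange does not make progress as stated.} Every $w\in B$ has a neighbour in each class of $\mathcal A$; otherwise its class would reach $V^-$. So if $z\in Z$ has a unique neighbour $w$ in a class $Y\in\mathcal B$, the only way for $w$ to vacate $Y$ towards $\mathcal A$ is to enter $Z$ after $z$ leaves. That requires $N_Z(w)=\{z\}$, which is the $B$-side solo condition one actually needs, and it is missing from your definition. You also do not say what such a size-preserving swap improves.

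The move that provably helps is the following. Take a solo edge $zy$ with $y\in B$, $N_Z(y)=\{z\}$, and $z$ movable to some $X\in\mathcal A\setminus\{Z\}$.
\begin{itemize}
\item Move $z$ to $X$.
\item Shift along a path from $X$ to $V^-$ avoiding $Z$; this is what terminality is for.
\item Put $y$ into $Z$.
\item Recurse on $F[B\setminus\{y\}]$ with $b$ colours.
\end{itemize}
The recursion is legitimate because each vertex of $B$ has at least $a$ neighbours in $A$, so $d_B(u)+d_B(u')\le 2s+1-2a=2(b-1)+1$ on edges inside $B$. You should state this check explicitly.

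\emph{Second, and this is the real obstacle, you do not handle the remaining case.} This is the case where every solo vertex of every terminal class has a neighbour in every other class of $\mathcal A$. When $\Delta\le s$, the counting that disposes of it rests on per-vertex bounds:
\begin{itemize}
\item $d_B(z)\le s-(a-1)=b$ for such a $z$;
\item each $y\in B$ has a unique neighbour in all but at most $b-1$ classes of $\mathcal A$.
\end{itemize}
Both bounds fail for vertices of degree greater than $s$. A high $y\in B$ may have two neighbours in every class of $\mathcal A$ and contribute no solo edge at all.

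Knowing that high vertices are independent does not restore these bounds. Summing $d(x)+d(y)\le 2s+1$ over the edges between $Z$ and $B$ weights each vertex's degree by its number of such edges, which is not the per-vertex inequality the count consumes. You leave this step open and fall back on ``the original discharging argument''. As written, $(\ast)$ is therefore established only by citation, not by your outline.
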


Cheng, Li, Sun, and Wang~\cite{2511.03957} proved that if $d(x) + d(y) \ge 2(1 - 1/r)n - 2$ for all nonadjacent $ x, y \in V(G) $, then the graph $ G $ either contains a $ K_r $-factor or belongs to one of two extremal graph classes.
In a related direction, Kawarabayashi~\cite{kawarabayashi2002} investigated the Ore-type condition that guarantees a $K_4^-$-tiling covering a specified number of vertices, where $K_4^-$ denotes the graph obtained  from $K_4$ by deleting one edge.
Extending these results to general graphs $H$, K{\"u}hn, Osthus, and Treglown~\cite{kuhn2009ore} established an Ore-type analogue of Theorem~\ref{thm:K2000H-matching} regarding almost perfect tilings.

\begin{thm}[K\"{u}hn, Osthus, and Treglown~\cite{kuhn2009ore}]\label{thm:AYTreglown}
For every graph $H$ and each $\eta> 0$, there exists an integer $n_0=n_0(H,\eta)$ such that if $G$ is a graph on $n\geq n_0$ vertices and \[d(x) + d(y) \geq 2\left(1 - \frac{1}{\chi_{\mathrm{cr}}(H)}\right)n\]
for all nonadjacent $x\neq y\in V (G)$, then $G$ has an $H$-tiling covering all but at
most $\eta n$ vertices.
\end{thm}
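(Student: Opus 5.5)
We follow the regularity-based strategy of Komlós (Theorem~\ref{thm:K2000H-matching}), adapted to the Ore-type hypothesis as Kühn, Osthus and Treglown do. Put $k=\chi(H)$ and let $B^*$ be the complete $k$-partite ``bottle'' graph with $k-1$ classes of size $h-\sigma$ and one class of size $\sigma$, where $h=|H|$ and $\sigma=\sigma(H)$. Then $\chi_{cr}(B^*)=\chi_{cr}(H)$, and for large $t$ a blow-up $B^*(t)$ has an $H$-factor, because $H$ tiles $B^*(h)$ exactly (rotate the colour classes). So the theorem reduces to finding an almost perfect tiling of the reduced graph by small weighted copies of $B^*$. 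Each such copy blows up, via the Blow-up Lemma, or more simply via embedding $H$ greedily into super-regular pairs, to an $H$-tiling missing only $\eta n/2$ vertices.

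The steps are as follows.
\begin{enumerate}
\item Apply the degree form of the Regularity Lemma with parameters $\varepsilon\ll d\ll\eta$, obtaining clusters $V_1,\dots,V_\ell$, an exceptional set $V_0$, and the reduced graph $R$.
\item Transfer the Ore condition to $R$. For nonadjacent clusters $V_i,V_j$, the pair has density below $d$. So most pairs $x\in V_i$, $y\in V_j$ are nonadjacent in $G$, and averaging gives $d_R(i)+d_R(j)\ge 2(1-1/\chi_{cr}(H)-\gamma)\ell$ for $\gamma=\gamma(\varepsilon,d)$ small. Because this bound is only approximate, we also allow fractional tilings.
\item Show that $R$ has a fractional $B^*$-tiling (LP weights on copies of $B^*$ with vertex loads at most $1$) covering all but $\eta\ell/4$ of its weight. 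Here the Ore condition is used through a structural split. Let $S$ be the set of vertices of $R$ with $d_R<(1-1/\chi_{cr}(H))\ell-\gamma'\ell$. The Ore condition forces $S$ to be a clique in the complement, so $S$ is independent in $R$. Every vertex outside $S$ that is nonadjacent to some vertex of $S$ has large degree, compensating for it. We then run Komlós' augmenting argument on a maximum fractional tiling. Suppose the uncovered weight exceeds $\eta\ell/4$. Pick an uncovered vertex, or a low-degree vertex in $S$ paired with one of its high-degree non-neighbours. Their combined degree into the tiling is at least about $2(1-1/\chi_{cr})$ times the tiling's size. So on average they send more than the threshold number of edges into the copies of $B^*$, and some copy (or some pair of copies) can be re-tiled to increase the total weight. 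This contradicts maximality.
\item Convert the fractional tiling into an integer tiling of a slightly refined partition by splitting each cluster in proportion to its weights. Make each resulting $k$-tuple super-regular by discarding $O(\varepsilon)$-fractions, and embed $H$-copies until each tuple has only $o(n)$ leftover vertices. The total leftover is $|V_0|$ plus the discarded and uncovered parts, which is at most $\eta n$.
\end{enumerate}

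The main obstacle is step 3. The minimum-degree argument of Komlós uses the fact that \emph{every} vertex has large degree into the tiling. Under the Ore hypothesis, low-degree vertices can exist, and they are all mutually adjacent in the complement. I would first try to show that each low-degree vertex of $S$ is already almost saturated in any maximum tiling. If not, choose a non-neighbour $y\notin S$ of an uncovered $x\in S$; then $d(x)+d(y)$ gives the averaging. The delicate case is when $y$ is itself covered. Then we must swap $y$ out of its copy and argue via the degree sum. Handling this swap cleanly, possibly by working in the graph $R$ with $S$'s weight capped at $|S|\le\ell/\chi_{cr}(H)$ (which follows from the Ore condition), is where I expect most of the work.
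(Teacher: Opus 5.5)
There are two concrete errors in your setup, and the central step is never actually carried out.

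\textbf{The bottle graph is wrong for $k\ge 3$.} With classes of sizes $\sigma,h-\sigma,\ldots,h-\sigma$ you get $\chi_{cr}(B^*)=k-1+\frac{\sigma}{h-\sigma}$. This is strictly smaller than $\chi_{cr}(H)=k-1+\frac{(k-1)\sigma}{h-\sigma}$. Moreover, blow-ups of $B^*$ have no almost perfect $H$-tiling. For $H=K_3$ you get $B^*=K_{1,2,2}$, and triangles in a blow-up with part sizes $t,2t,2t$ cover at most $3t$ of the $5t$ vertices. Rotating $k-1$ copies of $H$ puts all $k-1$ small classes into the same part, so the correct bottle is $\mathcal{B}(H)=K_{(k-1)\sigma,\,h-\sigma,\ldots,h-\sigma}$, as in the paper.

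\textbf{The structural claim in step 3 is inverted.} If $x,y\in S$ were nonadjacent, then $d_R(x)+d_R(y)<2(1-1/\chi_{cr}(H)-\gamma')\ell$, which contradicts the Ore condition from step 2 once $\gamma'\ge\gamma$. So $S$ is a clique in $R$, not an independent set. This is exactly how the paper uses the Ore condition for $V_0^1$ and $J_L$. Your cap $|S|\le \ell/\chi_{cr}(H)$ does not follow from the Ore condition, and it can fail; the clique property only gives $|S|<(1-1/\chi_{cr}(H))\ell+1$.

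\textbf{The augmenting step is not established.} Pairing an uncovered low-degree vertex $x$ with a high-degree non-neighbour $y$ does not help when $y$ is covered. In that case $y$'s edges into the tiling cannot be used to absorb $x$, and you leave this case open. Also, the phrase ``some copy (or pair of copies) can be re-tiled'' is the entire content of Koml\'os' argument, not a routine step.

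With the correct orientation of $S$, the difficulty disappears and no swapping is needed. Work with fractional tilings by homomorphic copies of $\mathcal{B}(H)$, that is, weighted $k$-cliques of $R$. This is legitimate because $K_k((k-1)h)$ has a perfect $\mathcal{B}(H)$-tiling, obtained by rotating the neck through the $k$ parts. In a maximum fractional tiling, the set of vertices with slack spans no $K_k$, since otherwise you could add weight on that clique. Hence this set meets the clique $S$ in at most $k-1$ vertices, and all but $k-1$ units of uncovered weight sit on vertices of degree at least $(1-1/\chi_{cr}(H)-\gamma')\ell$. A Koml\'os-type augmentation needs the degree bound only for vertices carrying uncovered weight. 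It counts their edges into the tiling, and their mutual edges are controlled by (weighted) Tur\'an, since they span no $K_k$. So the argument goes through with negligible loss.

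For comparison, the paper does not reprove this theorem. It obtains the stronger Theorem~\ref{thm:main} by a different device. In a lexicographically maximal $k$-clique cover of $R$, any two cliques of equal order $i<k$ admit a perfect matching of non-edges (the claim inside Proposition~\ref{prop:lowerbound}). This lets the Ore condition be summed over pairs of ``uncovered'' vertices. The counting in Propositions~\ref{prop:varphik} and~\ref{prop:lambdai} then shows that the $k$-cliques can absorb the smaller cliques.
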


Motivated by Theorem~\ref{thm:SZ2003H-matching}, which states that the number of uncovered vertices in the minimum degree setting can be reduced to a constant depending only on $H$, K{\"u}hn, Osthus, and Treglown~\cite{kuhn2009ore} conjectured that the same should hold for the Ore-type condition. By adapting the proof techniques developed in~\cite{zhao2003}, we confirm this conjecture.

\begin{thm}\label{thm:main}
For every graph $H$, there exists a constant $C= C(H)$ and an integer $n_0$ such that if $G$ is a graph on $n\geq n_0$ vertices and
\[
d(x) + d(y) \geq 2\left(1 - \frac{1}{\chi_{\mathrm{cr}}(H)}\right)n
\]
for all nonadjacent $x, y \in V(G)$, then $G$ has an $H$-tiling covering all but at
most $C$ vertices.
\end{thm}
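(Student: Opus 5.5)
We follow the strategy of Shokoufandeh and Zhao (Theorem~\ref{thm:SZ2003H-matching}), adapting the minimum-degree arguments to the Ore-type setting. Write $k=\chi(H)$, $h=|H|$, $\sigma=\sigma(H)$, and $\chi_{cr}=\chi_{cr}(H)$. By Theorem~\ref{thm:AYTreglown} we already have an $H$-tiling missing at most $\eta n$ vertices. The task is to absorb the leftover vertices until only $C(H)$ remain.

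\textbf{Step 1: separating degrees.} Let $S=\{x: d(x)<(1-1/\chi_{cr})n-\gamma n\}$ be the set of low-degree vertices. Any two vertices of $S$ violate the Ore condition unless they are adjacent, so $S$ is a clique. Every $x\in S$ is adjacent to all $y\notin S$ with $d(x)+d(y)<2(1-1/\chi_{cr})n$. Hence $|S|<n/\chi_{cr}+1$ or so, and $G-S$ has high minimum degree relative to $n$. We must also handle the case where $|S|$ is large. Then $S$ is a big clique, and we tile greedily inside $S\cup$ its neighbourhood using copies of $K_h\supseteq H$.

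\textbf{Step 2: regularity and the reduced graph.} We apply the Regularity Lemma to $G$. The Ore condition passes to the reduced graph $R$ in an approximate form: clusters of low degree form an almost-clique in $R$, and the remaining clusters have degree close to $(1-1/\chi_{cr})|R|$. Following Komlós and Kühn--Osthus--Treglown, we find in $R$ an almost perfect tiling by blown-up copies of the "bottle" graph $B$. This is a complete $k$-partite graph with $k-1$ parts of size $(h-\sigma)$ and one part of size $\sigma$, each scaled. The Blow-up Lemma then turns it into an $H$-tiling of $G$ that misses only $\eta n$ vertices, mostly in the exceptional set.

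\textbf{Step 3: absorption (main obstacle).} As in Shokoufandeh--Zhao, we process each uncovered vertex $v$. We use its degree to find $k-1$ clusters of some bottle that $v$ sees densely, and we insert $v$ into that bottle. We rebalance the part sizes by shifting $\sigma$ vs.\ $h-\sigma$ vertices between classes, which is possible because a bottle can be tiled by $H$ in several proportions. Repeating this absorbs all but $O(1)$ vertices. The divisibility mismatch caps the loss at roughly $5(k-2)(h-\sigma)^2/(\sigma(k-1))$ plus terms from $S$.

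The hard case is a leftover vertex $v$ of low degree. Under Ore's condition such a $v$ lies in $S$, so it may see fewer than $k-1$ clusters of any bottle. For these vertices we instead use that $S$ is a clique joined to all high-degree vertices. We therefore place low-degree vertices in copies of $H$ built mostly inside $S\cup N(S)$, reserving a small linked set of high-degree vertices in advance for this purpose. Balancing this reservation against the bottle structure, especially near the extremal configuration where $|S|\approx n/\chi_{cr}$, is where the main technical work will lie. In that configuration we would argue directly from the structure, since $G$ is then close to a complete $k$-partite-like graph with one clique part.
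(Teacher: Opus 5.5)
There is a genuine gap in Step 3. Your rebalancing ``by shifting $\sigma$ versus $\omega$ classes'' assumes each bottle has spare room. A bottle tiling with exact proportions $(\alpha,1,\ldots,1)$, which is what Step 2 produces, has none.

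Take $H$ to be a bottle graph $K(\sigma,\omega,\ldots,\omega)$, as one may. Suppose a high-degree exceptional vertex $v$ fails to see a large cluster $U_j$ of the chosen bottle. This is exactly what happens when $v$ behaves like a member of a large class in a near-extremal $G$. Then $v$ must go into the $U_j$-class, and $U_j$ gains a surplus of one vertex. Once $U_j$ carries a surplus $x$ over part sizes $\alpha L,L,\ldots,L$, count as follows:
\begin{itemize}
\item every copy of $H$ uses at least $h-\omega=(k-2+\alpha)\omega$ vertices outside $U_j$;
\item only $(k-2+\alpha)L$ such vertices exist, so at most $L/\omega$ copies fit;
\item hence at least $x$ vertices stay uncovered.
\end{itemize}
Since $|V_0|$ is linear in $n$, these surpluses are unbounded. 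Shipping them to other bottles only moves them into another exact-ratio bottle, where the same count applies.

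The missing idea is how to manufacture slack. The paper analyses a lexicographically maximal $k$-clique-cover of the reduced graph under the Ore condition (Propositions~\ref{prop:lowerbound}, \ref{prop:varphik} and~\ref{prop:lambdai}). When the excess $s$ in \eqref{eq:varphik} is at least $\mu$, the Decomposition Lemma (Lemma~\ref{lem:DS}) uses well- or over-connected $k$-cliques to convert all smaller cliques into regular $k$-cliques with neck ratio $\alpha'=\alpha+\Theta(\mu)>\alpha$. Each such clique contains a balanced piece $\mathcal{R}_k^1(L_0)$ with $L_0\gg\theta_1L_1$. Only this makes the $\sigma/\omega$ swaps of Proposition~\ref{prop:R'} and the sink-set transfer legitimate.

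When $s<\mu$ no such slack exists, and this is the real extremal case. It is not about a large clique of low-degree vertices. In this case the reduced graph contains $t$ nearly independent cluster sets $U_1,\ldots,U_t$, each of size about $\ell/\chi_{\mathrm{cr}}(H)$ and almost completely joined to the rest. So $G$ is close to a complete multipartite graph with large independent parts. The paper needs a separate stability argument here:
\begin{itemize}
\item an oversized $U_i$ is rebalanced by moving the centres of vertex-disjoint $\omega$-stars in $G[U_i]$, whose existence comes from the Ore condition via Proposition~\ref{prop:nui};
\item for $t<k-1$, an $H_0$-tiling of $V_B$ is matched by Hall's theorem with copies of $K_t(\omega)$ in $V_A$.
\end{itemize}
Your outline has nothing playing either role.

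Conversely, the case you call hard is easy. Vertices of degree below $(1-1/\chi_{\mathrm{cr}}(H))n$ are pairwise adjacent, so the low-degree exceptional vertices can be tiled among themselves with fewer than $h$ left over. No reserved linked set is needed.

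Two smaller slips:
\begin{itemize}
\item Your bound $|S|<n/\chi_{\mathrm{cr}}+1$ is not justified, and it is not needed.
\item The bottle of $H$ has parts in ratio $(k-1)\sigma:(h-\sigma)$, not $\sigma:(h-\sigma)$.
\end{itemize}
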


\subsection{Notation}\label{subsec:notation}

Let $G$ be a graph. For a vertex subset $X\subseteq V(G)$, let $G[X]$ be the subgraph of $G$ induced on $X$ and $G-X=G[V(G)\setminus X]$. For a vertex $v\in V(G)$, we write $G-v$ for $G-\{v\}$. Denote the set of neighbors of $v$ in $X$ by $N_G(v, X)$, and let $d_G(v, X) = |N_G(v, X)|$ be the degree of $v$ in $X$. When $X = V(G)$, we use the standard abbreviations $N_G(v)$ and $d_G(v)$, respectively.
For two subsets $A, B \subseteq V(G)$, we define $e_G(A, B)=\sum_{v\in A}d_G(v,B)$. Note that when $A$ and $B$ are disjoint, $e_{G}(A,B)$ is the number of edges in $G$ with one endvertex in $A$ and the other in $B$.
For a directed graph $D$, let $N_{D}^{+}(v) = \{u \in V(D) : (v, u) \in E(D)\}$ denote the out-neighborhood of $v$, and let $d_{D}^{+}(v) = |N_{D}^{+}(v)|$ denote the out-degree of $v$. We omit the subscripts $G$ and $D$ when the graph or digraph is clear from the context.

A bipartite graph with vertex partition $(A, B)$ is denoted by $G(A, B)$. We use $K_{n_1, n_2, \ldots, n_r}$ to denote the complete $r$-partite graph with partition sizes $n_1, n_2, \ldots, n_r$. In particular, $K_r(s)$ denotes the complete $r$-partite graph with each part of size $s$. The graph $K_{1,t}$ is a \emph{$t$-star}. Denote by $\nu_t(G)$ the maximum number of vertex-disjoint $t$-stars in $G$.
For integers $p$ and $q$, we  let  $[p,q] = \{i \in \mathbb{Z} : p \le i \le q \}$.

\subsection{Main theorem and structure of the paper}\label{subsec:mainthm}

Our proof relies on the concept of \emph{bottle-graph}. A $k$-\emph{chromatic bottle-graph} $\mathcal{B}$ is a complete $k$-partite graph with partition sizes $(\sigma, \omega, \omega, \ldots, \omega)$, where $\sigma = \alpha \omega$ for some $0 < \alpha \le 1$. We refer to $\sigma$ and $\omega$ as the \emph{neck} and \emph{width} of $\mathcal{B}$, respectively. The critical chromatic number of $\mathcal{B}$ is
\begin{align*}
\chi_{cr}(\mathcal{B}) =(\chi(\mathcal{B}) - 1)\frac{|\mathcal{B}|}{|\mathcal{B}| - \sigma(\mathcal{B})}=(k-1)\frac{\sigma+(k-1)\omega}{\sigma+(k-1)\omega-\sigma}=k-1+\alpha.
\end{align*}
The vector
\[
\beta = \left(\frac{\alpha}{k-1+\alpha}, \frac{1}{k-1+\alpha}, \ldots, \frac{1}{k-1+\alpha}\right)
\]
is the \emph{color-vector} of $\mathcal{B}$.

For a $k$-chromatic $h$-vertex graph $H$ with smallest color-class size $\sigma = \sigma(H)$, we define the \emph{bottle-graph of $H$}, denoted $\mathcal{B}(H)$, as the smallest bottle-graph that contains an $H$-factor with the color-vector $\beta = (s, t, \ldots, t)$, where $s = \sigma/h$ and $t = (1-s)/(k-1)$. Then
\[\chi_{\mathrm{cr}}(\mathcal{B}(H))=k-1+\frac{s}{t}=\frac{k-1}{1-s}\ \mbox{and}\
\chi_{\mathrm{cr}}(H) =(k-1)\frac{h}{h-\sigma} =(k-1)\frac{1}{1-\frac{\sigma}{h}}=\frac{k-1}{1-s}.\]
To verify the existence of such a bottle-graph, let $\{\sigma, \sigma_1, \sigma_2, \ldots, \sigma_{k-1}\}$ be the sizes of the color-classes in a $k$-coloring of $H$. We can construct $\mathcal{B}(H)$ by taking $k-1$ vertex-disjoint copies of $H$ and cyclically shifting the color-classes. Specifically, in the $i$-th copy of $H$, we assign the color-classes of size $\sigma, \sigma_i, \sigma_{i+1}, \ldots, \sigma_{k-1}, \sigma_1, \ldots, \sigma_{i-1}$ to the partitions $1, 2, \ldots, k$ of $\mathcal{B}(H)$. Under this construction, the order of $\mathcal{B}(H)$ is at most $(k-1)h$. Therefore, it is sufficient to
prove Theorem~\ref{thm:main} when $H$ is a bottle-graph.

\begin{thm}\label{thm:maintheorem}
Let $H$ be a $k$-chromatic bottle-graph with neck $\sigma$ and width $\omega$. There exist constants $C= C(H)$ and $n_0$ such that if $G$ is a graph on $n \ge n_0$ vertices and
\begin{align}\label{eq:degree}
d(x) + d(y) \ge 2\left(1 - \frac{1}{\chi_{\mathrm{cr}}(H)}\right)n = 2\left(1 - \frac{1}{k-1} + \gamma\right)n
\end{align}
for all nonadjacent $x, y \in V(G)$, where $\alpha = \sigma/\omega$ and $\gamma = \frac{\alpha}{(k-1)(k-1+\alpha)}$, then $G$ contains an $H$-tiling covering all but at most $C$ vertices.
\end{thm}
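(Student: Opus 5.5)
We follow the Shokoufandeh--Zhao strategy, adapted to the Ore-type setting. First we use the structure forced by~\eqref{eq:degree}. Let $L=\{v: d(v)<(1-\frac{1}{k-1}+\gamma)n\}$ be the set of low-degree vertices. Any two vertices of $L$ must be adjacent, so $L$ is a clique. If $|L|$ were large, then for $k\ge 3$, or whenever $|L|\ge |H|$, we could tile $L$ greedily by copies of $H$. So the real content is the interplay between $L$ and the high-degree vertices $V\setminus L$.

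We apply the Regularity Lemma to $G$ and pass to the reduced graph $R$. The Ore condition transfers to $R$ in an approximate form: a cluster is \emph{low} if its average degree is small, and two low clusters are almost always adjacent in $R$. Using Theorem~\ref{thm:AYTreglown} as a black box, or rather its proof on $R$, we obtain a fractional tiling of $R$ by bottle-graphs $\mathcal{B}$ with $\chi_{cr}(\mathcal{B})$ close to $\chi_{cr}(H)$, covering all but $\eta|R|$ clusters. The main theorem then follows the usual absorbing-free scheme of~\cite{zhao2003}. We fix a maximum $H$-tiling $\mathcal{T}$ of $G$ and suppose the uncovered set $U$ has size larger than $C$. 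We then derive a contradiction by finding a bounded-size reconfiguration: remove a bounded number of copies of $H$ in $\mathcal{T}$ and re-tile their vertices together with a few vertices of $U$ into more copies.

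The argument splits into a non-extremal and an extremal case. In the \emph{non-extremal case}, $G$ is not $\epsilon$-close to the extremal configurations. These configurations are, roughly, a complete $k$-partite-like graph with one sparse class of size about $n/\chi_{cr}(H)$, together with the Ore-type variant in which a small dense clique of low-degree vertices is attached. Here the regularity and Blow-up Lemma argument produces an $H$-tiling of the regular pairs that leaves only $o(n)$ vertices. The leftover vertices are then absorbed one at a time via the degree condition: each uncovered vertex of high degree has many neighbours in some bottle-graph blow-up, so it can replace a vertex there. Uncovered low-degree vertices are handled in pairs, using the fact that a low-degree vertex forces every non-neighbour to have high degree. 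Divisibility issues are fixed by moving vertices between clusters, as in~\cite{komlos2001}. This leaves only $O(1)$ vertices uncovered.

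In the \emph{extremal case}, the vertex set is close to a partition $V_0,V_1,\dots,V_{k-1}$ in which $|V_0|\approx \frac{\alpha}{k-1+\alpha}n$ and $V_0$ is nearly independent. We first clean the partition by moving atypical vertices. Next we count the vertices of $V_0$ that have many neighbours inside $V_0$; this is where the Ore condition differs from the minimum degree setting. Following~\cite{zhao2003}, we use $\nu_t(G[V_0])$, the maximum number of disjoint $t$-stars in $V_0$, to build a few special copies of $H$ that take extra vertices from $V_0$ and correct the part sizes. We then complete the tiling with the Blow-up Lemma applied to the nearly complete $k$-partite remainder. The key inequality is that~\eqref{eq:degree} forces either enough stars in $G[V_0]$ or a size deficit of $V_0$ that bounds the imbalance. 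For nonadjacent $x,y\in V_0$ it gives $d(x)+d(y)\ge 2(1-\frac1{k-1}+\gamma)n$, which yields about $\sum_{v\in V_0} d(v,V_0)\gtrsim$ (excess of $|V_0|$ over $\frac{\alpha}{k-1+\alpha}n$)$\cdot|V_0|$. The exception is when many vertices of $V_0$ are pairwise adjacent, and then they form a clique we can tile directly.

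We expect this extremal case to be the main obstacle. With only the Ore condition, a single vertex of $V_0$ may have very small degree, which is compensated by all its non-neighbours having large degree. The star-counting lemma must therefore be reproved using pairs rather than individual minimum degree. We would first attempt a dichotomy: either the low-degree vertices number at most a constant, in which case we cover each by a dedicated copy of $H$ using its neighbourhood inside a clique, or they are many, in which case they span a clique of size $\Omega(n)$ and their high-degree complement behaves as in~\cite{zhao2003}.
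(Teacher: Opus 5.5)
There are genuine gaps, and they sit where most of the work in the proof lies.

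\textbf{Non-extremal case.} Your black box (the proof of Theorem~\ref{thm:AYTreglown} run on $R$) gives only an almost-tiling of $R$ by bottle-graphs whose critical chromatic number is \emph{close to} $\chi_{cr}(H)$, with $\eta|R|$ clusters left over. Inserting the exceptional vertices needs quantitative \emph{slack}. That means a decomposition of $G-V_0$ into regular $k$-cliques with cluster ratio $(1,\dots,1,\alpha')$, where $\alpha'-\alpha$ is a constant much larger than the exceptional proportion. Only then does each clique, after losing the copies of $H$ through exceptional vertices, still contain a balanced piece $\mathcal{R}^1_k(L_0)$. That piece is what lets you put the $\sigma$-class in any cluster and shift $\omega-\sigma$ vertices between clusters. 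You never show that \emph{not close to extremal} yields such slack, and this is the heart of the proof. The paper gets it from a lexicographically maximal clique-cover of $R$: the identity $\varphi_k=\sum_{i\ge 2}(i-1)\varphi_{k-i}+(k-1)\gamma+s$, the lower bounds on $\lambda_i$, and the Decomposition Lemma when $s\ge\mu$; the condition $s<\mu$ is what defines the extremal case.

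Even with slack, balancing inside each clique leaves $\Theta(1)$ vertices in each of $\Theta(\ell)$ cliques. Since $\ell$ can be as large as $M(\varepsilon)$, that bound is not a function of $H$ alone. Your appeal to moving vertices between clusters as in \cite{komlos2001} omits the mechanism that fixes this:
\begin{itemize}
\item the digraph $\mathcal D$ on clusters;
\item the Ore-based bound $d^+_{\mathcal D}(U)\ge \frac{(k-1)^2\gamma}{k}|V(\mathcal D)|$ for high-degree clusters, which gives a sink set of size $O(1/\gamma)$;
\item the fact that low-degree clusters, whose out-degree may be tiny, form a clique in $R$.
\end{itemize}
Your framing of a maximum tiling plus a bounded-size reconfiguration is never substantiated, and your later steps do not use it.

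\textbf{Extremal case.} This case is mis-specified. The obstruction is a nearly independent set of size about $\frac{n}{k-1+\alpha}=\left(\frac{1}{k-1}-\gamma\right)n$, an $\omega$-class, not $\frac{\alpha n}{k-1+\alpha}$. For a high-degree $v\in V_0$ the Ore condition gives only $d(v,V_0)\ge |V_0|-\frac{n}{k-1+\alpha}$, so your key inequality is false when $\alpha<1$. For a counterexample, take the complete $k$-partite graph with $k-1$ parts of size $\frac{n}{k-1+\alpha}-x$ and one part of size $\frac{\alpha n}{k-1+\alpha}+(k-1)x$, for small $x>0$. It satisfies~\eqref{eq:degree}, yet that last part is independent.

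More seriously, the extremal structures are not only near-complete $k$-partite graphs. There may be only $t<k-1$ sparse classes, almost completely joined to an arbitrary remainder. For example, let $k=3$ and $\alpha<1$, and take an independent set $I$ of size $\frac{n}{2+\alpha}$ completely joined to two disjoint cliques, each of size $\frac{(1+\alpha)n}{2(2+\alpha)}$.
\begin{itemize}
\item This graph satisfies~\eqref{eq:degree} and is far from $3$-partite.
\item It has no slack. Each copy of $H$ meets $I$ in at most $\omega$ vertices and $|I|=\omega n/h$. So any decomposition into regular $3$-cliques with ratio $\alpha'>\alpha$ leaves $\Omega((\alpha'-\alpha)n)$ vertices of $I$ uncovered.
\end{itemize}
It therefore falls between your two cases: your non-extremal argument fails on it, and your final Blow-up step needs a near-complete $k$-partite remainder. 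The paper handles this situation ($s<\mu$, $t<k-1$) differently. It tiles $V_B$ with $(k-t)$-partite bottle-graphs, which do have slack, and matches them via Hall's theorem to copies of $K_t(\omega)$ in $V_A$.

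Finally, the low-degree difficulty you anticipate is milder than you fear. Inside a nearly independent $U_1$ the low-degree vertices form a clique, so there are at most $\Delta(G[U_1])+1$ of them, and the star count uses only the high-degree vertices. Your proposed dichotomy (constantly many versus a linear-size clique) is neither exhaustive nor needed.
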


The remainder of this paper is organized as follows. Section~\ref{sec:tool} introduces some technical tools. The proof of Theorem~\ref{thm:maintheorem} is presented in Sections~\ref{sec:proof}-\ref{sec:balancedcase}.

\section{Tools and clique cover}\label{sec:tool}

\subsection{Regularity Lemma and Blow-up Lemma}\label{subsec:lemma}
In the proof of our main theorem, we will use Szemer\'{e}di's Regularity Lemma~\cite{S1976} and the Blow-up Lemma of Koml\'{o}s, S\'{a}rk\"{o}zy, and Szemer\'{e}di~\cite{komlos1997}. We begin by introducing the necessary definitions.
Let $G=G(A,B)$ be a bipartite graph. The \emph{density} between $A$ and $B$ is defined as
\[
d(A,B) = \frac{e(A, B)}{|A||B|}.
\]
For $\varepsilon> 0$, the pair $(A, B)$ is \emph{$\varepsilon$-regular} if for all $X \subset A$ and $Y \subset B$ satisfying $|X| > \varepsilon|A|$, $|Y| > \varepsilon|B|$, we have $|d(X, Y) - d(A, B)| < \varepsilon$. Furthermore, for $d > 0$, the pair $(A, B)$ is \emph{$(\varepsilon, d)$-super-regular} if it is $\varepsilon$-regular and satisfies:
\[
d_G(a)> d|B|\quad \text{for all } a \in A, \quad \text{and} \quad d_G(b)>d|A|\quad \text{for all } b \in B.
\]

The Regularity Lemma provides a structural partition of a large and dense graph into a bounded number of clusters, where the edges between most pairs of clusters behave in a pseudo-random manner. We utilize the degree form of this lemma, which provides a structural partition
of the graph while discarding a small number of edges.

\begin{lemma}[{\bf Regularity Lemma-Degree Form}, Szemer\'{e}di~\cite{S1976}]\label{lemma:Regularity}
For every $\varepsilon > 0$, there exists $m=m(\varepsilon)$ and $M = M(\varepsilon)$ such that if $G $ is any graph and $d$ is any real number with $0\leq d\leq 1$, then there exists a partition of $V$ into $\ell + 1$ classes $V_0, V_1, \ldots, V_\ell$ and a  subgraph $G' \subseteq G$ such that
\begin{itemize}
    \item $m\le \ell \le M$,
    \item $|V_0| \le \varepsilon|V(G)|$,
     $|V_1| = |V_2| = \ldots = |V_{\ell}| = L \le \varepsilon|V(G)|$,
    \item $d_{G'}(v) > d_G(v) - (d + \varepsilon)|V(G)|$ for all $v \in V(G)$,
    \item $e(G'[V_i])=0$ for all $i\in [1,\ell]$,
    \item For all  $1 \le i < j \le \ell$, the pair $(V_i, V_j)$ is $\varepsilon$-regular in $G'$ and has density either $0$ or greater than $d$.
\end{itemize}
\end{lemma}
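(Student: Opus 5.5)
We plan to derive the degree form from the standard Szemer\'edi Regularity Lemma, applied with a much smaller parameter $\varepsilon'\ll\varepsilon$ and then cleaned up. The standard lemma gives $m(\varepsilon')\le \ell'\le M'(\varepsilon')$ and a partition $V_0',V_1',\dots,V_{\ell'}'$. Here $|V_0'|\le\varepsilon' n$, all other classes have equal size $L'$, and all but at most $\varepsilon'\ell'^2$ pairs $(V_i',V_j')$ are $\varepsilon'$-regular. We take $m(\varepsilon')$ large, so that $L'\le n/m\le\varepsilon n$. We define $G'$ by deleting four kinds of edges: all edges inside clusters, all edges meeting $V_0'$ (optional, since $V_0$ is unconstrained), all edges in irregular pairs, and all edges in regular pairs of density at most $d$. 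The remaining pairs are $\varepsilon'$-regular with density $0$ or $>d$. The task is then to show that each vertex loses at most $(d+\varepsilon)n$ edges, after moving a few bad vertices into $V_0$.

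\textbf{Step 1 (bounding the per-vertex loss).} We bound the loss of a vertex $v\in V_i'$ from each kind of deleted edge.
\begin{itemize}
\item Edges inside its cluster cost at most $L'\le n/m\le \varepsilon n/4$.
\item Edges to $V_0'$ cost at most $\varepsilon' n$.
\item For irregular pairs, call a cluster \emph{bad} if it lies in more than $\sqrt{\varepsilon'}\ell'$ irregular pairs. There are at most $2\sqrt{\varepsilon'}\ell'$ bad clusters, and we move all of them into $V_0$. Every vertex of a good cluster then loses at most $\sqrt{\varepsilon'}\ell' L'\le\sqrt{\varepsilon'}n$ edges through irregular pairs.
\item For regular pairs $(V_i',V_j')$ of density at most $d$, regularity implies that fewer than $\varepsilon' L'$ vertices of $V_i'$ have more than $(d+\varepsilon')L'$ neighbours in $V_j'$. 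Call $v$ \emph{atypical} if it is such an exceptional vertex for more than $\sqrt{\varepsilon'}\ell'$ indices $j$. Double counting shows that at most $\sqrt{\varepsilon'}L'$ vertices per cluster are atypical. Every typical vertex loses at most $(d+\varepsilon')n+\sqrt{\varepsilon'}n$ edges through low-density pairs.
\end{itemize}

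\textbf{Step 2 (restoring equal cluster sizes).} We remove the atypical vertices into $V_0$. To keep all clusters the same size, we then remove further arbitrary vertices so that each remaining cluster has exactly $L=\lceil(1-\sqrt{\varepsilon'})L'\rceil$ vertices. The standard slicing fact says that subsets of proportion at least $1-\sqrt{\varepsilon'}$ of an $\varepsilon'$-regular pair form a $2\varepsilon'$-regular pair, with density changed by at most $\varepsilon'$. Hence every kept pair is $\varepsilon$-regular. We recompute the densities and delete from $G'$ any kept pair whose new density is at most $d$. Such a pair had density at most $d+\varepsilon'$ before, so by the same typicality argument, with $d$ replaced by $d+\varepsilon'$, this costs only an extra $O(\varepsilon' n)$ per typical vertex.

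\textbf{Step 3 (bookkeeping).} The new $V_0$ has size at most
\[
\varepsilon' n + 2\sqrt{\varepsilon'}n + \sqrt{\varepsilon'}n + O(\sqrt{\varepsilon'})n \le \varepsilon n .
\]
The number of remaining clusters satisfies $\ell\ge(1-2\sqrt{\varepsilon'})\ell'$, which is at least $m$ if $m(\varepsilon')$ was chosen large enough, and $\ell\le M'(\varepsilon')=:M$. For vertices placed in $V_0$ the degree condition holds automatically if we keep all their edges except those inside clusters. Note that deleting edges at other vertices only lowers their degree by amounts already counted at those vertices, and $G'$ is symmetric, so each deleted edge has been charged to both endpoints. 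Every remaining vertex therefore loses at most $(d+\varepsilon)n$ edges once $\varepsilon'$ is chosen with $\varepsilon'+4\sqrt{\varepsilon'}+\varepsilon/4\le\varepsilon$.

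\textbf{Main obstacle.} The delicate point is the interaction between Steps 1 and 2. Trimming clusters to equal size changes densities and regularity, and edges deleted at one vertex affect the degree of the other endpoint. Both issues are handled by the slicing lemma and by charging each deleted edge to both endpoints.
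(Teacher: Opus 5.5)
The paper does not prove this lemma. It imports it from Szemer\'edi's work, in the degree form that is standard in the Koml\'os--Simonovits survey, and uses it as a black box, so there is no in-paper argument to compare with. Your plan is the usual derivation: apply the classical lemma with $\varepsilon'\ll\varepsilon$, discard clusters lying in many irregular pairs and vertices that are exceptional in many sparse pairs, trim to a common size, and delete internal, irregular and sparse edges. The counting in Step 1 is correct. As written, however, Step 2 does not go through.

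The problem is the order in which you fix atypical vertices and sparse pairs. In Step 1, ``atypical'' refers only to regular pairs of density at most $d$. In Step 2 you also delete pairs whose original density lies in $(d,d+\varepsilon']$ but drops to at most $d$ after trimming. Step 1 says nothing about a vertex's degree into such pairs. Regularity allows up to $\varepsilon' L'$ vertices of $V_i'$ to be joined to almost all of $V_j'$. Nothing prevents one fixed vertex from being in that position for a positive fraction of the indices $j$ while still being typical in your sense, so deleting these pairs could cost it a linear number of edges. You also cannot rerun the typicality argument after trimming, because removing the newly atypical vertices changes the cluster sizes and densities again.

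The repair is to make every decision before any vertex is moved:
\begin{itemize}
\item declare all regular pairs of density at most $d+\varepsilon'$ to be deleted;
\item call $v\in V_i'$ atypical if it has more than $(d+2\varepsilon')L'$ neighbours in $V_j'$ for more than $\sqrt{\varepsilon'}\ell'$ such pairs.
\end{itemize}
Regularity and double counting still give at most $\sqrt{\varepsilon'}L'$ atypical vertices per cluster. A typical vertex then loses at most $(d+2\varepsilon')n+\sqrt{\varepsilon'}n$ edges through sparse pairs. Every kept pair has density $>d+\varepsilon'$ before trimming, hence $>d$ after it by your slicing fact, so no recomputation is needed.

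Two smaller points.
\begin{itemize}
\item Deleting the edges meeting $V_0'$ is not optional. The degree condition is required for every $v\in V(G)$, including exceptional vertices, and a vertex of $V_0'$ of degree $n-1$ would violate it. Your Step 3 implicitly corrects this by keeping all edges at $V_0$. That is the right definition of $G'$: delete only edges inside final clusters and edges between final clusters forming a discarded pair. With it, the $\varepsilon' n$ term in Step 1 disappears.
\item The classical lemma needs $n\ge N(\varepsilon')$, so your derivation covers only large $n$. This is harmless: the statement as printed already fails for tiny graphs (for $n=1$ and $\varepsilon<1$ it forces $V_0=\emptyset$ and $L=1>\varepsilon n$), and it is only applied with $n$ large.
\end{itemize}
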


The sets $V_1,\ldots,V_{\ell}$ are called \emph{clusters}, $V_0$ is called the \emph{exceptional set}, and the
vertices in $V_0$ are \emph{exceptional vertices}.
The \emph{reduced graph} $R$ of $G$ is
the graph whose vertices are $V_1,\ldots,V_{\ell}$, where an edge exists between $V_i$ and $V_j$ if the pair $(V_i, V_j)$ is $\varepsilon$-regular with density greater than $d$ in $G'$.  The following lemma demonstrates that the Ore-type condition is almost preserved in the reduced graph.

\begin{lemma}[K\"{u}hn, Osthus, and Treglown~\cite{kuhn2009ore}]\label{lemma:Rdegree} Given a constant $C$, let $G$ be a graph such that $d_G(x)+ d_G(y)\geq C|V(G)|$ for all nonadjacent $x\neq y\in V (G)$. Let $R$ be the reduced graph obtained by applying Lemma~\ref{lemma:Regularity} with parameters $\varepsilon$ and $d$. Then for all nonadjacent $V_i\neq V_j\in V (R)$, $d_R(V_i)+d_R(V_j ) \geq (C- 2d-4\varepsilon)|V(R)|$.
\end{lemma}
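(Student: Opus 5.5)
We prove the statement by a direct double counting. Write $n=|V(G)|$, $L$ for the common cluster size and $\ell=|V(R)|$. Fix nonadjacent clusters $V_i\neq V_j$ of $R$. Since $V_i$ and $V_j$ are not joined in $R$, the pair $(V_i,V_j)$ has density $0$ in $G'$ by Lemma~\ref{lemma:Regularity}. Also $e(G'[V_i])=0$. Hence for every $x\in V_i$, all $G'$-neighbours of $x$ lie in $V_0$ or in clusters adjacent to $V_i$ in $R$, so
\[
d_{G'}(x)\le |V_0| + d_R(V_i)L \le \varepsilon n + d_R(V_i)L,
\]
and similarly for $y\in V_j$. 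The plan is to find a nonadjacent pair $x\in V_i$, $y\in V_j$ in $G$, apply the Ore condition to it, and transfer the resulting bound to $G'$ using $d_{G'}(v)>d_G(v)-(d+\varepsilon)n$.

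The one point needing care is that we need a pair $x\in V_i$, $y\in V_j$ that is nonadjacent in $G$, not merely in $G'$. In $G$ the pair $(V_i,V_j)$ could a priori be complete, since $G'$ discards edges. I would average over all pairs instead of picking one pair. For any $x\in V_i$, $y\in V_j$:
\begin{itemize}
\item if $xy\notin E(G)$, the hypothesis gives $d_G(x)+d_G(y)\ge Cn$;
\item if $xy\in E(G)$, the edge was deleted, and we bound $d_G(x)-d_{G'}(x)$ instead.
\end{itemize}
The deletion bound $d_{G'}(v)>d_G(v)-(d+\varepsilon)n$ caps the number of deleted edges at each vertex. So the number of $G$-edges between $V_i$ and $V_j$ is at most $L(d+\varepsilon)n$, which is at most $L^2/2$ when $d+\varepsilon$ is small compared with $L/n\approx 1/\ell$. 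But that is false in general, because $\ell$ can be as large as $M(\varepsilon)$.

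So I expect this step to be the main obstacle. The fix I would try first is to quantify over vertices rather than pairs. Take any $x\in V_i$. Either some $y\in V_j$ is a non-neighbour of $x$ in $G$, or $x$ is $G$-adjacent to all of $V_j$. In the second case those $L$ edges are all absent from $G'$. If every $x\in V_i$ were complete to $V_j$ in $G$, every vertex would lose $L$ edges, which is allowed as long as $L\le (d+\varepsilon)n$.

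Hence the pair-nonadjacency route must be supplemented. I would instead rely on the fact, standard in K\"uhn--Osthus--Treglown, that the degree-form lemma is applied with this accounting: each vertex loses at most $(d+\varepsilon)n$ edges in total. Then for $x\in V_i$, the vertex $x$ has at least $L-(d+\varepsilon)n$ non-neighbours in $V_j$, whenever $x$ has fewer than $L$ neighbours in $V_j$ in $G$.

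If that still fails, I would argue by summing over all $x\in V_i$ and $y\in V_j$. Let $P$ be the set of pairs $(x,y)\in V_i\times V_j$ with $xy\notin E(G)$, and let $S=\sum_{x\in V_i}d_G(x)+\sum_{y\in V_j}d_G(y)$. On $P$ we use the Ore bound, and the remaining pairs are charged to deleted degree. Combining this with the cluster upper bound gives
\[
2\varepsilon n + (d_R(V_i)+d_R(V_j))L \ge Cn-2(d+\varepsilon)n.
\]
Dividing by $L$ and using $n\ge \ell L$ yields the claim $d_R(V_i)+d_R(V_j)\ge (C-2d-4\varepsilon)\ell$.
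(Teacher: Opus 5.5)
\textbf{There is a genuine gap: the proposal never shows that $G$ has a non-edge between $V_i$ and $V_j$.}

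Your computation is right once such a pair exists. If $x\in V_i$ and $y\in V_j$ with $xy\notin E(G)$, combine $d_{G'}(x)\le \varepsilon n+d_R(V_i)L$, the analogous bound for $y$, $d_{G'}(v)>d_G(v)-(d+\varepsilon)n$ and $\ell L\le n$. This gives exactly $(C-2d-4\varepsilon)\ell$. This one-pair argument is the intended proof; the paper gives no proof and only cites K\"uhn, Osthus and Treglown.

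But the existence of that pair is the entire difficulty, and your proposed fixes do not supply it.
\begin{itemize}
\item Your second fix is a non sequitur. If some $x\in V_i$ has fewer than $L$ neighbours in $V_j$, you already have a non-edge. The problematic case is exactly that every $x\in V_i$ is complete to $V_j$ in $G$. Besides, $L-(d+\varepsilon)n<0$ in the regime you were worried about.
\item Your third fix fails when the set $P$ of $G$-nonadjacent pairs is empty. Then the Ore hypothesis says nothing about $V_i\cup V_j$. Charging a pair to deleted edges only gives $d_G(x)-d_{G'}(x)\ge L$, which is an upper bound on $d_{G'}(x)$, not a lower bound on $d_G(x)+d_G(y)$.
\end{itemize}
So the displayed inequality $2\varepsilon n+(d_R(V_i)+d_R(V_j))L\ge Cn-2(d+\varepsilon)n$ is asserted, not derived.

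\textbf{The gap cannot be closed from the listed properties alone.} For an arbitrary $G'$ satisfying the properties in Lemma~\ref{lemma:Regularity}, the conclusion is false. Let $G$ be the disjoint union of cliques on $n/10$ and $9n/10$ vertices, so the Ore condition holds with $C=0.9$. Choose clusters inside the two cliques. Let $G'$ omit the edges inside clusters together with all edges between two clusters $V_1,V_2$ of the small clique.
\begin{itemize}
\item For $\varepsilon<d$, every vertex loses fewer than $2L\le 2\varepsilon n<(d+\varepsilon)n$ edges.
\item Every pair has density $0$ or $1$ in $G'$, hence is $\varepsilon$-regular.
\item Yet $V_1V_2\notin E(R)$, while $d_R(V_1)+d_R(V_2)\approx \ell/5$, far below $(0.9-2d-4\varepsilon)\ell$.
\end{itemize}

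\textbf{What is missing is a property of how $G'$ is built.} In the usual proof of the degree form, edges between two clusters are deleted only when the pair is not $\varepsilon$-regular or has density at most $d$. A pair that is complete bipartite in $G$ is neither, so it keeps its edges.

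Alternatively, normalise any output of the lemma: add back to $G'$ all edges of every pair $(V_i,V_j)$ that is complete in $G$. This preserves every property in Lemma~\ref{lemma:Regularity} and only adds edges to $R$. After that, $V_iV_j\notin E(R)$ forces a non-edge of $G$ between $V_i$ and $V_j$, and your one-pair computation completes the proof.
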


The Blow-up Lemma guarantees that if a target graph $H$ with bounded maximum degree can be embedded into a "complete blow-up" graph of $R$, it can also be embedded into a super-regular "blow-up" graph $G$.

\begin{lemma}[{\bf Blow-up Lemma}, Koml\'{o}s, S\'{a}rk\"{o}zy, and Szemer\'{e}di~\cite{komlos1997}]\label{lemma:blowup}
Given a graph $R$ of order $r$ and positive parameters $\delta$ and $\Delta$, there exists an $\varepsilon = \varepsilon(\delta, \Delta, r)>0$ such that the following holds. Let $n_1, n_2, \ldots, n_r$ be arbitrary positive integers, and let us replace vertices $v_1,\ldots,v_{r}$ of $R$ with pairwise disjoint sets $V_1,\ldots,V_{r}$ of sizes $n_1,\ldots,n_r$ (blowing up). We construct two graphs on the same vertex set $V = \bigcup_{i\in [1,r]} V_i$.
 The first graph $R_b$ is obtained by replacing each edge $v_iv_j\in E(R)$ with the complete bipartite graph between the corresponding vertex sets $V_i$ and $V_j$.
A sparser graph $G$ is constructed by replacing each edge $v_iv_j\in E(R)$ arbitrarily with an $(\varepsilon, \delta)$-super-regular pair between $V_i$ and $V_j$.
If a graph $H$ with $\Delta(H) \le \Delta$ is embeddable into $R_b$, then $H$ is also embeddable into $G$.
\end{lemma}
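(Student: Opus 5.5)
We propose to follow the randomized greedy embedding strategy of Koml\'{o}s, S\'{a}rk\"{o}zy, and Szemer\'{e}di. First, we reduce to a normalized setting. Fix an embedding $\phi_0$ of $H$ into $R_b$; it assigns each vertex $x\in V(H)$ to some class $V_{i(x)}$, and at most $n_i$ vertices of $H$ go to $V_i$. Adding isolated vertices to $H$, we may assume that $|H|=|V|$ and that exactly $n_i$ vertices of $H$ are assigned to $V_i$. Isolated vertices impose no constraints, so this is harmless. The task is then to find a bijection $\psi:V(H)\to V$ that respects the assignment $x\mapsto V_{i(x)}$ and maps edges of $H$ to edges of $G$. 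Each edge $xy$ of $H$ lies over an edge $v_{i(x)}v_{i(y)}$ of $R$, where the pair is $(\varepsilon,\delta)$-super-regular. We may also assume that all $n_i$ are large, since otherwise $\varepsilon$-regularity forces the pair to be complete and the statement is trivial. If the $n_i$ are very unbalanced, we split large classes into pieces of comparable size; random splitting preserves super-regularity with slightly worse parameters.

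The embedding runs in two phases. Phase~1 is a randomized greedy algorithm. We order $V(H)$ so that a small set of \emph{buffer} vertices in each class comes last; these are chosen with pairwise distance at least $3$ in $H$, which is possible because $\Delta(H)\le\Delta$. We embed the vertices one by one. For each unembedded $x$, we maintain a candidate set $C_t(x)\subseteq V_{i(x)}$ of vertices adjacent in $G$ to the images of all already-embedded neighbours of $x$. By $\varepsilon$-regularity, choosing the image of the current vertex uniformly at random among the ``good'' vertices of its candidate set keeps $|C_t(x)|\ge(\delta-\varepsilon)^{\Delta}n_{i(x)}$ for most vertices. A good vertex here is one that does not shrink too many future candidate sets below their typical size. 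Any vertex $x$ whose candidate set becomes dangerously small relative to the remaining free space is moved to a priority queue and embedded immediately. Phase~2 embeds the buffer vertices, which were left unembedded, by finding a perfect matching in an auxiliary bipartite graph. One side of this graph is the remaining buffer vertices of $V(H)$ assigned to class $i$. The other side is the remaining free vertices of $V_i$. The edges join $x$ to the free vertices of its candidate set. Since buffer vertices are far apart, their constraints are independent, so these matchings can be chosen class by class.

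The key steps, in order, are as follows. (i) Show that with positive probability the queue never becomes large, i.e.\ stays of size $o(n_i)$. This uses regularity: a set of vertices whose candidate sets all avoid a random choice would witness an irregular pair. (ii) Show that no vertex, and in particular no buffer vertex, ends Phase~1 with an empty or tiny candidate set. (iii) Verify Hall's condition for the Phase~2 bipartite graph. Small sets satisfy Hall's condition because each candidate set has linear size. Large sets satisfy it because of super-regularity: every free vertex $v\in V_i$ has many neighbours among the images in the relevant neighbouring classes. This holds because the free vertices at the end of Phase~1 form a random-like subset, and the degree condition $d_G(v)>\delta|V_j|$ guarantees that $v$ lies in many candidate sets.

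We expect the main obstacle to be step (iii) combined with the randomness analysis in (i). We must show that the set of unused vertices in each $V_i$ at the end of Phase~1 is ``typical'', so that every free host vertex is a candidate for many buffer vertices. This is exactly where the minimum-degree part of super-regularity is needed, not just regularity. We would handle it with a martingale or Chernoff-type concentration argument over the random choices, and bound the probability that some vertex $v\in V_i$ is avoided by almost all candidate sets. The constant $\varepsilon=\varepsilon(\delta,\Delta,r)$ is then chosen small enough that every error term, each of order $\varepsilon^{c}$ or $(\delta-\varepsilon)^{-\Delta}\varepsilon$, stays below the buffer proportion.
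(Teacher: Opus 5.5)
The paper does not prove this lemma. It quotes it from Koml\'os, S\'ark\"ozy and Szemer\'edi and uses it as a black box, so the relevant comparison is with their argument. Your outline follows that argument: randomized greedy embedding, buffer vertices last, a queue for critical vertices, and K\"onig--Hall at the end.

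\textbf{The gap is in step (iii).} Let the auxiliary bipartite graph join the $m$ remaining buffer vertices of class $i$ to the $m$ free vertices $F_i\subseteq V_i$. You verify Hall's condition only in two ranges:
\begin{itemize}
\item $|S|\le c'm$, where $c'm$ is the guaranteed size of $C(x)\cap F_i$;
\item $|S|>(1-c)m$, where $cm$ is the guaranteed number of remaining buffer vertices having a given free vertex as a candidate.
\end{itemize}
Both $c$ and $c'$ are small constants, so the whole range $c'm<|S|\le (1-c)m$ is untouched. Minimum-degree information cannot handle it.

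What is missing is a regularity-type property of the auxiliary graph. You need: for every $T\subseteq V_i$ with $|T|\ge \tau m$, all but at most $\tau' m$ buffer vertices of class $i$ have a candidate in $T$, where $\tau<c$ and $\tau'<c'$. Given this, take $T=F_i\setminus N(S)$. Either $|T|\ge\tau m$, and then $|S|\le\tau' m<c'm$; or $|T|<\tau m$, and then $|S|>(1-c)m$.

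This property has to be proved by a union bound over all $2^{n_i}$ sets $T$:
\begin{itemize}
\item For fixed $T$, a buffer vertex $b$ can miss $T$ only if one of its at most $\Delta$ neighbours lands on one of the at most $\varepsilon n_j$ vertices of its cluster whose neighbourhood in the current restriction of $T$ is atypically small.
\item That has conditional probability $\eta=O(\Delta\varepsilon)$ divided by the linear size of the set being sampled from.
\item The number of such $b$ is dominated by a binomial, and $(e\eta/\tau')^{\tau'|B_i|}\,2^{n_i}$ is small only because $\varepsilon$ is chosen tiny compared with $\delta$, $\Delta$, $\tau'$ and the buffer density.
\end{itemize}
Neither this estimate nor the resulting dependence of $\varepsilon$ on the buffer density appears in your plan.

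\textbf{Two further points.}

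First, consider the estimate above and also your minimum-degree claim, which needs $\Pr(\phi(y)\in N_G(v))$ bounded below for every fixed $v$ and every neighbour $y$ of a buffer vertex. Both are clean when the vertices of $N_H(B)$ are sampled from sets that are essentially their full candidate sets. The original proof arranges this by placing $N_H(B)$ at the very beginning of the order, when almost nothing of any cluster has been used. With only ``buffer last'', a neighbour of a buffer vertex may be embedded after the used vertices have consumed most of $N_G(v)\cap V_j$. Your appeal to the free sets being ``random-like'' would then require controlling, for every $v$ simultaneously and throughout Phase~1, how the used set meets $N_G(v)\cap V_j$. That is a substantially harder statement than the one-shot estimate you would have with $N_H(B)$ first.

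Second, the preliminary balancing step does not work as stated. The number of pieces depends on the ratios $n_j/n_i$, which are unbounded. So the new reduced graph has unbounded order, while $\varepsilon$ may depend only on $\delta,\Delta,r$. Also, a tiny class next to a large one gives an almost complete pair, not a complete one. In this paper the lemma is only applied to clusters of sizes in bounded ratio, where splitting into boundedly many pieces via the slicing lemma is harmless. The ``arbitrary $n_i$'' statement does not follow from your reduction.
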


\subsection{Clique cover}\label{subsec:cover}

In this section, we assume that $\mathcal{G}$ is a graph on $\ell$ vertices and for all nonadjacent $x, y \in V(\mathcal{G})$,
\begin{align}\label{eq:Orecondition}
d(x)+d(y) \ge 2\left(1 - \frac{1}{k-1} + \gamma - d-2\varepsilon\right)\ell.
 \end{align}

A $k$-\emph{clique-cover} $\Phi = \{\Phi_k, \Phi_{k-1}, \ldots, \Phi_1\}$ of $\mathcal{G}$ is a collection of pairwise vertex-disjoint cliques such that $V(\mathcal{G}) = \bigcup_{i=1}^k V(\Phi_i)$, where $\Phi_i$ denotes the set of cliques of order $i$ for each $i\in[1,k]$.
A $k$-clique-cover $\Phi$ is \emph{maximal} if the vector $(|\Phi_k|, |\Phi_{k-1}|, \ldots, |\Phi_1|)$ is lexicographically maximal among all $k$-clique-covers of $\mathcal{G}$. We denote a clique of order $i$ in the family $\Phi_i$ as $K^i$.
For $K^i \in \Phi_i$ and $K^j \in \Phi_j$ with $1 \le i\leq j\leq k$, we define the connectivity between these two cliques as follows:
\begin{itemize}
    \item $K^{i}$ is \emph{well-connected} to  $K^{j}$, denoted by $K^i \hookrightarrow K^j$, if $e(\{v\}, K^j) = j-1$ for all $v \in K^i$.
    \item $K^{i}$ is  \emph{over-connected} to  $K^{j}$, denoted by $K^i \overconnect K^j$, if $e(K^i, K^j) \ge i(j-1)$ but $K^i \not\hookrightarrow K^j$.
    \item $K^{i}$ is  \emph{under-connected} to  $K^{j}$, denoted by $K^i \underconnect K^j$, if $e(K^i, K^j) < i(j-1)$.
\end{itemize}

The following properties of maximal clique-covers are essential to our proof. Proposition~\ref{prop:connectivity} was established by Shokoufandeh and Zhao~\cite{zhao2003}.

\begin{prop}[Shokoufandeh and Zhao~\cite{zhao2003}]\label{prop:connectivity} Let $1\leq i\leq j< k$. Then
\begin{enumerate}[{\rm(i)}]
    \item $e(K^i, K^j)\leq i(j-1)$.
    In particular, if $e(K^i, K^j) = i(j-1)$, then $K^i \hookrightarrow K^j$.

    \item For any $K^k \in \Phi_k$ and any set of $k$ cliques $\{K_1^i, \ldots, K_k^i\} \subseteq \Phi_i$, we have $e(\bigcup_{p=1}^k K_{p}^i, K^k) \le ik(k-1)$.

    \item If $K^j \in \Phi_j$ and $K_1^i, K_2^i \in \Phi_i$ satisfy $e(K_1^i, K^j) = e(K_2^i, K^j) = i(j-1)$, then $K_1^i \hookrightarrow K^j$ and $K_2^i \hookrightarrow K^j$.
Furthermore, $K^j$ can be partitioned as $K^j = A_i(K^j) \cup B_i(K^j)$ such that  $|A_i(K^j)| = i$,  $|B_i(K^j)| = j-i$, and for all $v \in A_i(K^j)$, for all $u \in B_i(K^j)$, we have $d(v, K_1^i) = d(v, K_2^i) = i-1$ and
  $d(u, K_1^i) =d(u, K_2^i) = i$.
  Finally, let
\[S_i^j=\{K^j\in \Phi_j: e(K_1^i, K^j) = e(K_2^i, K^j) = i(j-1)\}\ \text{and}\ A_i^j=\{A_i(K^j):K^j\in S_i^j\}.\]
Then $\mathcal{G}[A_i^j]$ is an $i$-partite subgraph with $|S_i^j|$ vertices in each color-class. Moreover, $\mathcal{G}[ A_i^i\cup A_i^{i+1}\cup\ldots\cup A_i^k]$ is an $i$-partite graph with $\sum_{j=i}^k |S_i^j|$
vertices in each color-class.
\end{enumerate}

\end{prop}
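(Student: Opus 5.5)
The plan is to argue entirely by exchange arguments against the lexicographic maximality of $\Phi$. Any violation of (i)--(iii) should let us rebuild a clique-cover in which some clique grows by one vertex while all larger cliques are kept. The degree condition~\eqref{eq:Orecondition} plays no role here; only maximality of $\Phi$ is used.

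\textbf{Part (i).} Suppose first that some $v\in K^i$ is adjacent to all $j$ vertices of $K^j$. Then $K^j\cup\{v\}$ is a $(j+1)$-clique with $j+1\le k$, and $K^i-v$ is an $(i-1)$-clique (empty if $i=1$). Replacing $K^i,K^j$ by these two cliques increases $|\Phi_{j+1}|$ and changes only smaller sizes, so the vector becomes lexicographically larger. This contradicts maximality, so $d(v,K^j)\le j-1$ for every $v\in K^i$. Summing over $v\in K^i$ gives $e(K^i,K^j)\le i(j-1)$. If equality holds, every $v$ has exactly $j-1$ neighbours in $K^j$, which is the definition of $K^i\hookrightarrow K^j$.

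\textbf{Part (ii).} Here $K^k$ has order $k$ and cannot be enlarged, so the argument must instead create an extra $k$-clique. Suppose $e(\bigcup_p K^i_p,K^k)>ik(k-1)$. Each vertex of $K^k$ has at most $ik$ neighbours in the union, so some $w\in K^k$ has $d(w,\bigcup_p K^i_p)>i(k-1)$, indeed at least $i(k-1)+1$. I would then try to find $j$ cliques among the $K^i_p$, and a vertex in each, such that swapping out vertices of $K^k$ yields two disjoint $k$-cliques, or one $k$-clique plus a larger leftover.

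The cleanest route I would try is Hall-type. Consider the bipartite incidence between the $k$ vertices $u$ of $K^k$ and the $k$ cliques $K^i_p$, joining $u$ to $K^i_p$ if $u$ is complete to $K^i_p$. Since a vertex has at most $i$ neighbours in a clique and the total exceeds $ik(k-1)$, some vertex $u$ is complete to enough cliques. Then $\{u\}\cup K^i_p$ is an $(i+1)$-clique and $K^k-u$ is a $(k-1)$-clique. Doing two such swaps with distinct $u,u'$ and $p\ne p'$ should be arranged so that the vector increases. If $i+1$ does not yield improvement directly, iterate by reinserting, using part (i) applied to the new $(k-1)$-clique.

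This counting and swap step is the main obstacle. I expect one needs, by pigeonhole, a vertex $w$ complete to at least $k$ of the cliques, or else a pair of vertices that can be exchanged with cliques to form a new $K^k$ from $K^{i}_p\cup$ (part of $K^k$). The first idea to try is to take $w$ with many neighbours and some clique $K^i_p$ with $i$ neighbours of $w$. Then form $K^i_p\cup\{w\}$ together with vertices of $K^k$ complete to it. This requires $k-i-1$ further vertices of $K^k$ complete to $K^i_p$, and averaging should give them.

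\textbf{Part (iii).} By (i), $e(K^i_1,K^j)=i(j-1)$ forces both $K^i_1\hookrightarrow K^j$ and $K^i_2\hookrightarrow K^j$. Each vertex of $K^j$ has degree $i$ or $i-1$ into $K^i_1$. If some $u\in K^j$ were nonadjacent to two vertices of $K^i_1$, then counting shows another vertex is adjacent to all of $K^i_1$. Double counting the $j$ vertices against the total $i(j-1)$ then gives exactly $i$ vertices with degree $i-1$ (set $A$) and $j-i$ with degree $i$. Moreover the non-neighbours give a perfect matching between $A$ and $K^i_1$: each $v\in K^i_1$ misses exactly one vertex of $K^j$, and a collision would let us swap to enlarge a clique. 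Showing that $A$ is the same set for $K^i_2$ is again a swap argument. If some $u$ were complete to $K^i_1$ but missing from $K^i_2$, one can form $K^i_1\cup\{u\}$ and $K^j-u$ plus a vertex of $K^i_2$, enlarging a clique.

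The $i$-partite structure comes from the matchings. Colour each $a\in A_i(K^j)$ by the unique vertex of $K^i_1$ it misses. Two vertices of the same colour class, in different cliques $K^j,K^{j'}$, must be nonadjacent. Otherwise an exchange, in which both are moved into $K^i_1$ minus their common non-neighbour, produces a larger clique and contradicts maximality. This gives the stated colour classes of size $\sum_j|S^j_i|$.
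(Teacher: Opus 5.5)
Your part (i) is correct, and so is the first half of (iii), with one clarification. If two vertices $v,v'\in K^i_1$ miss the same $u\in K^j$, then $(K^j\setminus\{u\})\cup\{v,v'\}$ is a $(j+1)$-clique; that is the contradiction, not the counting remark you give first. Your swap $K^i_1\cup\{u\}$, $(K^j\setminus\{u\})\cup\{w\}$ correctly shows that $A_i(K^j)$ is the same for both cliques. The genuine gap is (ii): neither of your constructions beats the lexicographic order.
\begin{itemize}
\item The Hall-type swap replaces $K^k$ by the $(k-1)$-clique $K^k\setminus\{u\}$, which lowers $|\Phi_k|$.
\item Building a new $k$-clique from $K^i_p$ plus $k-i$ vertices of $K^k$ leaves exactly an $i$-clique behind, so the vector is unchanged.
\end{itemize}
What you need is a $k$-clique together with a disjoint $(i+1)$-clique inside $K^k\cup X$, where $X=\bigcup_p K^i_p$. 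The missing ingredient is a vertex of $X$ that repairs $K^k$. Since $|X|=ik$ and $e(X,K^k)>ik(k-1)$, some $x\in K^i_{p_0}$ is adjacent to all of $K^k$. If no $u\in K^k$ were complete to some $K^i_p$ with $p\neq p_0$, then $e(X,K^k)\le ik+(k-1)k(i-1)=ik(k-1)-k(k-1-i)\le ik(k-1)$, which is impossible. So such $u$ and $p$ exist. Replacing $K^k,K^i_{p_0},K^i_p$ by $(K^k\setminus\{u\})\cup\{x\}$, $K^i_p\cup\{u\}$ and $K^i_{p_0}\setminus\{x\}$ raises $|\Phi_{i+1}|$ (or $|\Phi_k|$ if $i=k-1$) without changing any larger class.

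The last step of (iii), the $i$-partite structure, also fails as written. Moving $a\in A_i(K^j)$ and $a'\in A_i(K^{j'})$ into $K^i_1\setminus\{v\}$ destroys both $K^j$ and $K^{j'}$. The freed vertex $v$ is the missing vertex of both, but it can restore only one of them. The net effect $\{j,j',i\}\to\{j,j'-1,i+1\}$ is not an improvement in general; it is lexicographically smaller as soon as $j,j'\ge i+2$. You never use $K^i_2$ here, yet this is exactly what the two-clique hypothesis, and your proof that $A_i(\cdot)$ is common to $K^i_1$ and $K^i_2$, are for. Since $A_i(K^{j'})$ is also the set of non-neighbours of $K^i_2$ in $K^{j'}$, there is $w\in K^i_2$ whose unique non-neighbour in $K^{j'}$ is $a'$. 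The four disjoint cliques $(K^j\setminus\{a\})\cup\{v\}$, $(K^{j'}\setminus\{a'\})\cup\{w\}$, $(K^i_1\setminus\{v\})\cup\{a,a'\}$ and $K^i_2\setminus\{w\}$ then raise $|\Phi_{i+1}|$ while leaving all larger classes unchanged. That contradiction makes each colour class independent, which gives the claimed $i$-partite structure.
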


\begin{prop}\label{prop:lowerbound}
If $K_1,\ldots,K_{p}$ are $p\geq 2$ cliques in $\Phi_{i}$ for some $i\in [1,k-1]$, then \[e\left(\bigcup\limits_{j=1}^{p} K_{j}, \mathcal{G}\right)\geq p i\left(1 - \frac{1}{k-1} + \gamma - d-2\varepsilon\right)\ell.\]
\end{prop}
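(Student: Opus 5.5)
The idea is to show that the union $U=\bigcup_{j=1}^p V(K_j)$ contains many nonadjacent pairs. Each such pair satisfies the Ore-type condition \eqref{eq:Orecondition}, and averaging those pair inequalities gives the bound on $e(U,\mathcal{G})=\sum_{v\in U}d(v)$. Write $\tau=\bigl(1-\frac{1}{k-1}+\gamma-d-2\varepsilon\bigr)\ell$, so the goal is $\sum_{v\in U}d(v)\ge pi\tau$, and \eqref{eq:Orecondition} says $d(x)+d(y)\ge 2\tau$ for every nonadjacent pair $x,y$.

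\emph{Step 1: structure of $U$.} Since $i\le k-1$, Proposition~\ref{prop:connectivity}(i) with $j=i$ gives $e(K_a,K_b)\le i(i-1)$ for distinct $a,b$. So between any two of the cliques at least $i^2-i(i-1)=i$ of the $i^2$ cross pairs are non-edges. This is exactly the maximality principle: if some vertex of $K_a$ were adjacent to all of $K_b$, we could build a $K^{i+1}$, contradicting lexicographic maximality. In particular, each vertex $v\in K_a$ (for $i\ge2$) or each vertex overall (for $i=1$) has at least one non-neighbour in $K_b$.

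\emph{Step 2: choose a spanning family of nonadjacent pairs.} I want a multiset $\mathcal{P}$ of nonadjacent pairs in $U$ in which every vertex of $U$ appears equally often, say $r$ times. Summing \eqref{eq:Orecondition} over $\mathcal{P}$ then gives $r\sum_{v\in U}d(v)\ge |\mathcal{P}|\cdot 2\tau = \frac{r\,pi}{2}\cdot 2\tau$, which is the claim.

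To build $\mathcal{P}$, I work with the bipartite non-adjacency graph $F_{a,b}$ between $K_a$ and $K_b$. Every vertex of $K_a$ has at least one non-neighbour in $K_b$, and vice versa. I then look for a perfect matching of $F_{a,b}$. For $p=2$ this requires that every vertex of $K_a$ has a non-neighbour in $K_b$; this follows because otherwise $v\cup K_b$ would be a $K^{i+1}$. With a perfect matching $M_{a,b}$ in hand, I take $\mathcal{P}$ to be the union of the $M_{a,b}$ over all pairs $a<b$. Each vertex then appears $p-1$ times, which gives the bound.

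\emph{Main obstacle: the perfect matching.} The difficulty is that $F_{a,b}$ may not have a perfect matching. For example, several vertices of $K_a$ might all miss the same single vertex of $K_b$. I would first try to rule this out using maximality. Suppose Hall's condition fails: there is a set $S\subseteq K_a$ whose non-neighbours in $K_b$ form a set $T$ with $|T|<|S|$. Then $S\cup(K_b\setminus T)$ is a clique of size greater than $i$. Swapping it for $K_b$, and taking $K_a\setminus S$ together with $T$, gives a cover with at least as many large cliques and a lexicographically larger vector. That contradicts maximality, so Hall's condition holds.

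A fallback if Hall fails in some configuration is a weighted version. Assign to each non-edge $xy$ in $F_{a,b}$ a weight, so that every vertex receives total weight at least $1$ and the total weight is at most $i$; this is a fractional cover argument via LP duality. The non-edge count from Step 1 makes this plausible.

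Finally, the condition $p\ge2$ is needed only so that a second clique exists to pair with. When $i=1$ the statement reduces directly to nonadjacency of single vertices.
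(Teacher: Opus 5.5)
Your proposal is correct and is essentially the paper's proof. The paper also uses Hall's theorem with lexicographic maximality to get a perfect matching in the complement of $\mathcal{G}[K_a,K_b]$, then sums the Ore-type inequality over matched pairs; it pairs the cliques cyclically ($K_j$ with $K_{j+1}$, each vertex counted twice) rather than over all $\binom{p}{2}$ pairs, which makes no difference. Two fixes to your Hall step: $S\cup(K_b\setminus T)$ may have more than $k$ vertices, so keep only an $(i+1)$-subset of it, and $(K_a\setminus S)\cup T$ need not be a clique, so cover the leftover vertices by singletons; with this, Step 1 and the LP fallback are unnecessary.
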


\begin{proof}
We prove the proposition by first showing that the vertices of any two cliques $K,K'\in \Phi_i$ can be paired as in the following claim.

\begin{claim}\label{claim:PMKK'}
For any two distinct cliques $K,K'\in \Phi_i$, the complement of the bipartite graph $\mathcal{G}[K,K']$ contains a perfect matching between $K$ and $K'$.
\end{claim}

\begin{proof}
Suppose not. By Hall's theorem, there exists a set $S\subseteq V(K)$ such that $|N_{\overline{\mathcal{G}}}(S,V(K'))|<|S|$, where $N_{\overline{\mathcal{G}}}(S,V(K'))$ denotes the set of vertices in $K'$ that are non-adjacent in $\mathcal{G}$ to any vertex of $S$. Let $T=V(K')\setminus N_{\overline{\mathcal{G}}}(S,V(K'))$.
Then any vertex of $S$ is adjacent in $\mathcal{G}$ to every vertex of $T$. Since both $K$ and $K'$
are cliques, the set $S\cup T$ is a clique in $\mathcal{G}$. Moreover,
\[
|S\cup T|=|S|+|T|=|S|+i-|N_{\overline{\mathcal{G}}}(S,V(K'))|>i.
\]
Since $i\le k-1$, choose a subset $Q\subseteq S\cup T$ with $|Q|=i+1$.  The set $Q$ is a clique.  Replacing $K$ and $K'$ by the $(i+1)$-clique $Q$, and covering the remaining vertices of $(K\cup K')\setminus Q$ by singleton cliques if necessary, gives a $k$-clique-cover. Its vector is lexicographically larger than that of $\Phi$, because it has one additional clique of order $i+1$ while no clique of order larger than $i+1$ is changed. This contradicts the maximality of $\Phi$. Hence the claim follows.
\end{proof}

Denote by $t=\left(1-\frac{1}{k-1}+\gamma-d-2\varepsilon\right)\ell$ and $K_{p+1}=K_1$. By Claim~\ref{claim:PMKK'}, for each $j\in [1,p]$, there exists a perfect matching in the complement of $\mathcal{G}[K_j,K_{j+1}]$. Hence we may pair the vertices of $K_j$ and $K_{j+1}$ as $\{(x_{j,s},y_{j,s}):s\in [1,i]\}$,
where $x_{j,s}\in V(K_j)$, $y_{j,s}\in V(K_{j+1})$, and $x_{j,s}y_{j,s}\notin E(\mathcal{G})$
for all $s\in [1,i]$.

By the Ore-type condition~\eqref{eq:Orecondition}, for all $j\in [1,p]$ and $s\in [1,i]$, we have
$d_{\mathcal{G}}(x_{j,s})+d_{\mathcal{G}}(y_{j,s})\ge 2t$.
Then
\[e(K_j\cup K_{j+1},\mathcal{G})=\sum_{u\in V(K_j)}d_{\mathcal{G}}(u)+\sum_{u\in V(K_{j+1})}d_{\mathcal{G}}(u)
=\sum_{s=1}^{i}(d_{\mathcal{G}}(x_{j,s})+d_{\mathcal{G}}(y_{j,s}))\ge 2it.\]
It follows that $\sum_{j=1}^{p} e(K_j\cup K_{j+1},\mathcal G)=2\sum_{u\in \bigcup_{j=1}^{p}V(K_j)}d_{\mathcal G}(u)\ge 2pit$.
Therefore,
\[
e\left(\bigcup_{j=1}^{p}K_j,\mathcal{G}\right)=\sum_{u\in \bigcup_{j=1}^{p}V(K_j)}d_{\mathcal{G}}(u) \ge pit.
\]
\end{proof}

For $i\in [1,k]$, let  $\varphi_i = |\Phi_i|/\ell$ be the normalized size of $\Phi_i$. Then
\begin{equation}\label{eq:sum1}
\sum_{i=1}^k i\varphi_i = 1.
\end{equation}

Let $i_0 = \min\left\{i\in [1,k]: |\Phi_i| \ge k\right\}$. We proceed under the following assumption, which is required for the proof of the main result.

\begin{assum}\label{assume}
Assume that $i_0 < k$ and $\varphi_i = 0$ for $i < i_0$.
\end{assum}
The following proposition establishes a lower bound on $\varphi_{k} $ under the Ore-type condition~\eqref{eq:Orecondition}.

\begin{prop}\label{prop:varphik}
$
\varphi_k \geq \sum\limits_{i=2}^{k-i_0} (i-1)\varphi_{k-i} + (k-1)\gamma - (k-1)(d+2\varepsilon).
$
\end{prop}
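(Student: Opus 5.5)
The plan is to double count the edges leaving the small cliques. Let $W=\bigcup_{K\in\Phi_{i_0}}V(K)$ and $p=|\Phi_{i_0}|$. By Assumption~\ref{assume} we have $p\ge k\ge 2$ and $i_0\le k-1$. Write $t=\left(1-\frac{1}{k-1}+\gamma-d-2\varepsilon\right)\ell$ as in Proposition~\ref{prop:lowerbound}.

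\emph{Lower bound.} Applying Proposition~\ref{prop:lowerbound} to all $p$ cliques of $\Phi_{i_0}$ gives
\[
e(W,\mathcal G)\ge p\, i_0\, t .
\]

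\emph{Upper bound.} Split $e(W,\mathcal G)$ according to the clique of $\Phi$ containing the other endpoint. Since $\varphi_j=0$ for $j<i_0$, only cliques in $\Phi_j$ with $j\ge i_0$ occur.
\begin{enumerate}[(a)]
\item For $K\in\Phi_{i_0}$ and $K^j\in\Phi_j$ with $i_0\le j<k$ and $K^j\neq K$, Proposition~\ref{prop:connectivity}(i) gives $e(K,K^j)\le i_0(j-1)$.
\item The edges inside $K$ itself contribute $i_0(i_0-1)$, which is exactly the same bound $i_0(j-1)$ with $j=i_0$. So in total each $K\in\Phi_{i_0}$ sends at most $i_0(j-1)|\Phi_j|$ edges into $\bigcup\Phi_j$ for every $i_0\le j<k$.
\item For a fixed $K^k\in\Phi_k$, I would use Proposition~\ref{prop:connectivity}(ii). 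Averaging it over all $k$-subsets of $\Phi_{i_0}$ is possible because $p\ge k$, and each clique of $\Phi_{i_0}$ lies in the same number of such subsets. This yields $e(W,K^k)\le p\, i_0(k-1)$.
\end{enumerate}
Summing these contributions gives
\[
e(W,\mathcal G)\le p\, i_0\sum_{j=i_0}^{k}(j-1)|\Phi_j| .
\]

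\emph{Combining.} Comparing the two bounds and dividing by $p\,i_0\,\ell$ gives
\[
1-\tfrac{1}{k-1}+\gamma-d-2\varepsilon\le\sum_{j=i_0}^k (j-1)\varphi_j .
\]
By \eqref{eq:sum1}, $\sum_j (j-1)\varphi_j=1-\sum_j\varphi_j$. Multiplying by $k-1$ and rearranging yields
\[
(k-1)\sum_j\varphi_j\le 1-(k-1)(\gamma-d-2\varepsilon).
\]
Substituting $1=\sum_j j\varphi_j$ once more turns this into
\[
\sum_{j=i_0}^{k}\bigl(j-(k-1)\bigr)\varphi_j\ge (k-1)(\gamma-d-2\varepsilon).
\]
The term $j=k$ contributes $\varphi_k$ and the term $j=k-1$ vanishes. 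For $j=k-i$ with $2\le i\le k-i_0$ the coefficient is $-(i-1)$. Moving these terms to the right-hand side gives exactly the claimed inequality.

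\emph{Main obstacle.} The step needing the most care is the upper bound, specifically the $K^k$ term. Proposition~\ref{prop:connectivity}(ii) controls only $k$ cliques at a time, so the averaging argument in (c) must be written out precisely. One must also check that the internal edges of each $K\in\Phi_{i_0}$ are counted consistently with the $j=i_0$ term of the sum.
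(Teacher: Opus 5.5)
Your proof is correct and follows essentially the same route as the paper: a lower bound on $e(\cdot,\mathcal G)$ from Proposition~\ref{prop:lowerbound} is compared with an upper bound from Proposition~\ref{prop:connectivity}(i),(ii), and the result is rearranged via \eqref{eq:sum1}. The only difference is that the paper takes just $k$ cliques $K_1,\dots,K_k\in\Phi_{i_0}$, so Proposition~\ref{prop:connectivity}(ii) applies directly; your averaging over $k$-subsets of $\Phi_{i_0}$ in step (c) is valid (each clique lies in $\binom{p-1}{k-1}$ of the $\binom{p}{k}$ subsets) but unnecessary.
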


\begin{proof}
Let $K_1, \ldots, K_k \in \Phi_{i_0}$ be $k$ distinct cliques of order $i_0$.  By Proposition~\ref{prop:connectivity}(i) and (ii), we have
\[
e\left(\bigcup_{p=1}^k K_{p}, \Phi_j\right)\le k i_0 (j-1)\varphi_j \ell\ \text{for\ all}\ j\in[i_0,k-1],\quad \text{and}\quad
e\left(\bigcup_{p=1}^k K_{p}, \Phi_k\right) \le k i_0 (k-1)\varphi_k \ell.
\]
By Assumption~\ref{assume}, $e\left(\bigcup_{p=1}^k K_{p}, \Phi_j\right)=0$ for all $j\in[1,i_0-1]$. Combining the above  inequalities, we get
\begin{equation}\label{eq:upperbound}
e\left(\bigcup_{p=1}^k K_{p}, \mathcal{G}\right)\le k i_0 \ell \sum_{j=i_0}^k (j-1)\varphi_j.
\end{equation}

Combining~\eqref{eq:upperbound} with Proposition~\ref{prop:lowerbound}, we obtain
\[
k i_0 \left(1 - \frac{1}{k-1} + \gamma - d -2\varepsilon \right)\ell \le e\left(\bigcup_{p=1}^k K_{p}, \mathcal{G}\right)\leq
k i_0 \ell \sum_{j=i_0}^k (j-1)\varphi_j
\]
It implies that
\begin{align}
 & \quad 1 - \frac{1}{k-1} + \gamma - d-2\varepsilon
\le \sum_{j=i_0}^k (j-1)\varphi_j  =\sum_{j=i_0}^k \left( \frac{k-2}{k-1}j + \frac{j-k+1}{k-1} \right) \varphi_j\nonumber \\
&= \frac{k-2}{k-1} \sum_{j=i_0}^k j\varphi_j + \frac{1}{k-1} \sum_{j=i_0}^k (j-k+1)\varphi_j\nonumber \\
&{\overset{\eqref{eq:sum1}}=} \frac{k-2}{k-1}\cdot 1 + \frac{1}{k-1} \left((k-k+1) \varphi_k + \sum_{j=i_0}^{k-1} (j-k+1)\varphi_j \right)\nonumber\\
&= \frac{k-2}{k-1} + \frac{1}{k-1} \varphi_k - \frac{1}{k-1} \sum_{j=i_0}^{k-2} (k-1-j)\varphi_j \nonumber\\
&{\overset{i=k-j}=} \frac{1}{k-1} \left( \varphi_k + k-2 - \sum_{i=2}^{k-i_0} (i-1)\varphi_{k-i} \right). \label{eq:process}
\end{align}

So $\varphi_k \geq \sum\limits_{i=2}^{k-i_0} (i-1)\varphi_{k-i} + (k-1)\gamma - (k-1)(d+2\varepsilon)$.
\end{proof}

By Proposition~\ref{prop:varphik}, we can let
\begin{equation}\label{eq:varphik}
\varphi_k = \sum_{i=2}^{k-i_0} (i-1)\varphi_{k-i} + (k-1)\gamma + s,
\end{equation}
where $s \ge -(k-1)(d+2\varepsilon) $.

Consider a clique $K \in \Phi_{k-i}$ for some $i \in [1, k-i_0]$ that is not over-connected to any $k$-clique. Let $\Lambda(K)$ denote the set of all $k$-cliques that are well-connected to $K$, and let $\lambda_i$ be the minimum value of $|\Lambda(K)|/\ell$ among all such cliques $K\in \Phi_{k-i}$, with at most one exception.
The next proposition provides a lower bound on $\lambda_i$. In applications of Proposition~\ref{prop:lambdai}, the possible exceptional clique may be absorbed into the exceptional vertex set. This increases that set by at most $k-i\le k$ vertices, and hence has no significant effect on the final bound.

\begin{prop}\label{prop:lambdai}
\begin{align*}
\lambda_i \ge (k-1)\gamma + \frac{i-1}{k-1}s + \sum_{2\le j\le i} (j-1)\varphi_{k-j} + (i-1)\sum_{j>i} \varphi_{k-j} - (k-i)(d+2\varepsilon).
\end{align*}
\end{prop}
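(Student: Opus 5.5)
The plan is to argue by contradiction with a double-counting argument on a \emph{pair} of cliques, in the spirit of the proof of Proposition~\ref{prop:varphik}. Suppose two distinct cliques $K,K'\in\Phi_{k-i}$, neither over-connected to any $k$-clique, both satisfy $|\Lambda(K)|,|\Lambda(K')|<\lambda^*\ell$, where $\lambda^*$ denotes the right-hand side of the proposition. (This is exactly why one exception is allowed: a single bad clique cannot be paired.) Write $t=\left(1-\frac{1}{k-1}+\gamma-d-2\varepsilon\right)\ell$. Proposition~\ref{prop:lowerbound} with $p=2$ gives the lower bound
\[
e(K\cup K',\mathcal G)\ge 2(k-i)t .
\]
The work is then to prove a matching upper bound on $e(K\cup K',\mathcal G)$ in terms of the $\varphi_j$ and $|\Lambda(K)|+|\Lambda(K')|$.

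For the upper bound I would split $V(\mathcal G)$ according to the cliques of $\Phi$ and treat three ranges.
\begin{itemize}
\item Cliques $K^j$ with $k-i\le j\le k-1$: Proposition~\ref{prop:connectivity}(i) gives $e(K,K^j)\le (k-i)(j-1)$.
\item Cliques $K^j$ with $i_0\le j<k-i$: applying Proposition~\ref{prop:connectivity}(i) with the roles reversed gives $e(K,K^j)\le j(k-i-1)$.
\item Cliques $K^j$ with $j<i_0$: there are none, by Assumption~\ref{assume}.
\end{itemize}
For $k$-cliques the hypothesis that $K$ is not over-connected means each $K^k\in\Phi_k$ is either well-connected to $K$, giving exactly $(k-i)(k-1)$ edges, or under-connected, giving fewer than $(k-i)(k-1)$. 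The crude bound ``fewer than'' is useless. I therefore need a sharper statement: if $K^k\notin\Lambda(K)$, then $e(K,K^k)\le (k-i)(k-1)-(k-i)=(k-i)(k-2)$, or at least a bound of this quality for the pair $K\cup K'$ when $K^k$ lies outside $\Lambda(K)\cap\Lambda(K')$.

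I would prove this sharper bound by a swapping argument against the lexicographic maximality of $\Phi$, as in Claim~\ref{claim:PMKK'}. Suppose some vertex $v\in K^k$ is adjacent to all of $K$ and the edge count is large. Then $K\cup\{v\}$ is a $(k-i+1)$-clique, and $K^k-v$ together with suitable vertices might be rearranged so that the number of $k$-cliques is preserved while a $(k-i+1)$-clique is gained. Such an outcome contradicts maximality. Combined with the under-connected condition, this should force every vertex of $K^k$ to miss a vertex of $K$ once $K^k\notin\Lambda(K)$, which is exactly the loss of $k-i$ edges. This is the step I expect to be the main obstacle, and I am least sure of the exact form of the bound that the swapping yields. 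If the single-clique bound fails, I would instead try to prove it for the pair, using the structure $A_i(K^j),B_i(K^j)$ of Proposition~\ref{prop:connectivity}(iii).

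Granting that bound, I would add the three ranges. The contribution from $\Phi_k$ is at most
\[
(k-i)(k-2)\cdot 2\varphi_k\ell+(k-i)\bigl(|\Lambda(K)|+|\Lambda(K')|\bigr)<(k-i)(k-2)\cdot 2\varphi_k\ell+2(k-i)\lambda^*\ell ,
\]
and the remaining ranges contribute the corresponding sums over $\varphi_j$ from the first two bullet points. Comparing with $2(k-i)t$ and dividing by $2(k-i)\ell$ gives an inequality involving $\lambda^*$, $\varphi_k$ and the $\varphi_{k-j}$. Next, substitute $\varphi_k$ from \eqref{eq:varphik} and eliminate $\sum_j j\varphi_j$ using \eqref{eq:sum1}, exactly as in the computation \eqref{eq:process}. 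The coefficients of $\varphi_{k-j}$ then split naturally at $j=i$: for $j\le i$ they are $(j-1)$, and for $j>i$ they are $(i-1)$. The term $\frac{i-1}{k-1}s$ comes from the coefficient $\frac{i-1}{k-1}$ in front of $\varphi_k$. This would contradict the choice of $\lambda^*$. The error term $(k-i)(d+2\varepsilon)$ should come out of the same bookkeeping, and I would verify it at the end, not plan around it.
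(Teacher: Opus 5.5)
\textbf{There is a genuine gap, at exactly the step you flagged.} The sharper bound you want is false in general, and it is also unnecessary.

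\emph{Why the sharper bound fails.} For $k-i\ge 2$, nothing in the hypotheses excludes an under-connected $K^k$ with
\[
e(K,K^k)=(k-i)(k-1)-1>(k-i)(k-2).
\]
Start from a $K^k$ well-connected to $K$ and delete one edge between them. Lexicographic maximality of $\Phi$ survives, because every clique-cover of the smaller graph is a clique-cover of the original one. The Ore-type condition is a global degree condition, so a single edge does not affect it when there is any slack. Hence no argument based only on maximality, such as your swapping argument, can give the bound.

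The swap also has no room to work. $K\cup K^k$ has only $2k-i$ vertices, so it cannot contain a $k$-clique together with a disjoint $(k-i+1)$-clique. Borrowing vertices from other cliques of $\Phi$ would destroy those cliques.

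As a consistency check, suppose you did lose $k-i$ edges per clique outside $\Lambda(K)$. Then your final computation would give coefficient $\frac{k-2}{k-1}$ on $s$ and $(j-1)-\frac{j-i}{k-i}$ on $\varphi_{k-j}$ for $j>i$. Those are not the coefficients you announce. Yours are what a loss of \emph{one} edge per clique produces.

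\textbf{The missing idea is integrality, which is what the paper uses.} An under-connected $K^k$ satisfies $e(K,K^k)\le (k-i)(k-1)-1$. The total slack in the edge count is of order $\ell$, and each under-connected clique uses up at least one unit of it, so this suffices.

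Concretely, let $u$ be the number of under-connected $k$-cliques. Your three-range estimate gives
\[
e(K,\mathcal G)\le (k-i)\ell\sum_j(j-1)\varphi_j+\ell\sum_{j>i}(j-i)\varphi_{k-j}-u.
\]
Compare this with the single-clique lower bound $e(K,\mathcal G)\ge (k-i)t$. By Proposition~\ref{prop:lowerbound} with $p=2$, this lower bound fails for at most one clique of $\Phi_{k-i}$. Using \eqref{eq:process} and \eqref{eq:varphik}, you get
\[
\frac{u}{\ell}\le \frac{k-i}{k-1}s+\sum_{j>i}(j-i)\varphi_{k-j}+(k-i)(d+2\varepsilon).
\]
Since $K$ is over-connected to no $k$-clique, $|\Lambda(K)|/\ell\ge\varphi_k-u/\ell$, which is exactly the stated bound.

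Your pair-by-contradiction framing also works with this one-edge loss: apply it to the sum of the two counts of under-connected cliques for $K$ and $K'$. The rest of your outline matches the paper: the three ranges via Proposition~\ref{prop:connectivity}(i), and the elimination via \eqref{eq:process} and \eqref{eq:varphik}.
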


\begin{proof}
Let $m =\frac{1}{\ell} \left|\{K^k \in \Phi_k \colon e(K, K^k) < (k-1)(k-i)\}\right|$. We prove the proposition by bounding $e(K, \mathcal{G})/\ell$ from below and above. First we give the upper bound. By Proposition~\ref{prop:connectivity}(i) and (ii),
\begin{align}
\frac{e(K, \mathcal{G})}{\ell} &=\frac{e(K,\Phi_k)+e(K,\cup_{j=1}^{k-i-1}\Phi_{j})+e(K,\cup_{j=k-i}^{k-1}\Phi_{j})}{\ell}\nonumber\\
&\leq (k-1)(k-i)\varphi_k - m +  \sum_{j=1}^{k-i-1} j(k-i-1)\varphi_{j}+\sum_{j=k-i}^{k-1} (k-i)(j-1)\varphi_{j} \nonumber\\
&\leq (k-1)(k-i)\varphi_k - m+\sum_{j>i} (k-i-1)(k-j)\varphi_{k-j} +  \sum_{j=1}^{i} (k-i)(k-j-1)\varphi_{k-j}\nonumber \\
& = (k-i)\left((k-1)\varphi_k + \sum_{j=1}^{i}(k-j-1)\varphi_{k-j} + \sum_{j>i} (k-j-1)\varphi_{k-j}\right)
+ \sum_{j>i} (j-i)\varphi_{k-j} - m\nonumber\\
&= (k-i)\sum\limits_{j=i_0}^{k}(j-1)\varphi_{j}+ \sum_{j>i} (j-i)\varphi_{k-j} - m. \label{eq:upper-K}
\end{align}

Now we bound  $e(K, \mathcal{G})/\ell$ from below. By the proof of Proposition~\ref{prop:lowerbound}, for any two distinct cliques $K_1,K_2\in \Phi_{k-i}$, we have
\[
e(K_1\cup K_2,\mathcal{G}) \ge 2(k-i)\left(1-\frac{1}{k-1}+\gamma-d-2\varepsilon\right)\ell .
\]
By the pigeonhole principle, at least one of $K_1$ and $K_2$ satisfies
\[
e(K_j,\mathcal{G})\ge(k-i)\left(1-\frac{1}{k-1}+\gamma-d-2\varepsilon\right)\ell, j\in[1,2].
\]
Consequently, at most one clique in $\Phi_{k-i}$ can fail this inequality. Thus, except possibly for one clique in $\Phi_{k-i}$, we may assume that
\begin{equation}\label{eq:lower-K}
\frac{e(K,\mathcal{G})}{\ell} \ge(k-i)\left(1-\frac{1}{k-1}+\gamma-d-2\varepsilon\right).
\end{equation}

Combining~\eqref{eq:upper-K} and~\eqref{eq:lower-K}, we obtain
\begin{align*}
(k-i)\left(1 - \frac{1}{k-1} + \gamma - d -2\varepsilon\right)\le (k-i)\sum\limits_{j=i_0}^{k}(j-1)\varphi_{j}+ \sum_{j>i} (j-i)\varphi_{k-j} - m.
\end{align*}
Thus
\begin{align*}
m &\leq (k-i)\sum\limits_{j=i_0}^{k}(j-1)\varphi_{j}+ \sum_{j>i} (j-i)\varphi_{k-j}-(k-i)\left(1 - \frac{1}{k-1} + \gamma - d -2\varepsilon\right) \\
&{\overset{\eqref{eq:process}}\leq} \frac{k-i}{k-1}\left(\varphi_k + k-2 - \sum_{j=2}^{k-i_0}(j-1) \varphi_{k-j}\right)+ \sum_{j>i} (j-i)\varphi_{k-j}
-(k-i)\left(1 - \frac{1}{k-1} + \gamma - d -2\varepsilon\right)\\
&{\overset{\eqref{eq:varphik}}\leq} \frac{k-i}{k-1}\left(\sum_{j=2}^{k-i_0} (j-1)\varphi_{k-j} + (k-1)\gamma + s + k-2 - \sum_{j=2}^{k-i_0}(j-1) \varphi_{k-j}\right)+ \sum_{j>i} (j-i)\varphi_{k-j}\\
&\quad-(k-i)\left(1 - \frac{1}{k-1} + \gamma - d -2\varepsilon\right)\\
&= \frac{k-i}{k-1}s + \sum_{j>i} (j-i)\varphi_{k-j} + (d +2\varepsilon)(k-i).
\end{align*}

By the definition of $K$, $K$ is not over-connected to any $k$-clique, so every $k$-clique $K^k\in \Phi_k$ satisfying
$e(K,K^k)\ge (k-i)(k-1)$ is in fact well-connected to $K$. Therefore,
\begin{align*}
\frac{|\Lambda(K)|}{\ell}&\geq \varphi_{k}-m {\overset{\eqref{eq:varphik}}\geq}\sum_{j=2}^{k-i_0} (j-1)\varphi_{k-j} + (k-1)\gamma + s-\frac{k-i}{k-1}s - \sum_{j>i} (j-i)\varphi_{k-j}- (d +2\varepsilon)(k-i)\\
&= (k-1)\gamma + \frac{i-1}{k-1}s + \sum_{2\le j\le i} (j-1)\varphi_{k-j} + (i-1)\sum_{j>i} \varphi_{k-j} - (k-i)(d+2\varepsilon).
\end{align*}
Taking the minimum over all eligible cliques $K\in \Phi_{k-i}$, with possibly one exceptional clique described above, gives the desired lower bound on $\lambda_i$. We complete the proof of Proposition~\ref{prop:lambdai}.
\end{proof}

\section{Proof sketch of Theorem~\ref{thm:maintheorem}}\label{sec:proof}

In this section, we outline the proof of Theorem~\ref{thm:maintheorem}. Throughout the proof, we assume that $n$ is sufficiently large. Recall that $\sigma\leq \omega$.
If $\sigma<\omega$, then we assume that
\begin{align}\label{eq:parameter}
  0<  \varepsilon \ll d \ll \mu \ll \min\{\alpha, 1 - \alpha\}.
\end{align}
If $\sigma=\omega$, then $\alpha=1$ and we  instead assume that
\begin{align}\label{eq:parameter2}
  0<  \varepsilon \ll d \ll \mu \ll 1.
\end{align}

Now we sketch the proof of Theorem~\ref{thm:maintheorem}. The proof begins by applying  Lemma~\ref{lemma:Regularity} to $G$ with parameters $\varepsilon$ and $d$. This yields a partition of $V(G)$ into an exceptional set $V_0$ and clusters $V_1, \ldots, V_\ell$ of equal size $L$. For $i \ge 1$, we assume $L$ is divisible by specific integers to be determined later. If necessary, this can be achieved by moving only a constant number of vertices from each cluster $V_i$ into $V_0$.

Let $R$ be the reduced graph of $G$ and let $G''=G- V_{0}$. By Lemma~\ref{lemma:Rdegree},
\begin{align}\label{eq:oreconditionR}
d_R(V_i)+d_R(V_j ) \geq 2\left(1-\frac{1}{k-1}+\gamma- d-2\varepsilon\right)\ell\ \text{for\ all\ nonadjacent}\ V_i, V_j\in V (R).
\end{align}

We next describe the local structure arising from the clique-cover of the reduced graph. An \emph{$\varepsilon$-regular $r$-clique}, denoted by $\mathcal{R}_{\varepsilon}(n_1,\ldots,n_r)$,
is an $r$-partite graph with parts of sizes $n_1,\ldots,n_r$, such that every pair of parts forms an $\varepsilon$-regular pair. We write $\mathcal{R}_{\varepsilon,r}^{a}(t)$  for an $\varepsilon$-regular $r$-clique with  $n_i = t$ for all $i\in [1, r-1]$ and $n_r=at$,
where $0<a\leq 1$. We call $\mathcal{R}_{\varepsilon,r}^{1}(t)$ \emph{balanced}, and $\mathcal{R}_{\varepsilon,r}^{a}(t)$ \emph{unbalanced} when $a<1$.

Let $\Phi = \{\Phi_k, \Phi_{k-1}, \ldots, \Phi_1\}$ be a maximal clique-cover of the reduced graph $R$. This clique-cover naturally induces a \emph{cluster-clique-cover} $\Psi = \{\Psi_k, \Psi_{k-1}, \ldots, \Psi_1\}$ of $G''$, where each $\Psi_i$ consists of  pairwise disjoint $\varepsilon$-regular $i$-cliques for $i\in [1,k]$.
We denote an $\varepsilon$-regular $i$-clique by $\EuScript{K}^i$. All terminology introduced in Subsection~\ref{subsec:cover} applies to this cluster-clique-cover as well.

Let  $i_0 = \min\left\{i\in [1,k]: |\Phi_i| \ge k\right\}$.
We may assume that $i_0 < k$. Indeed, if $i_0=k$, then there are fewer than $k$ cliques in each of $\Phi_1,\ldots,\Phi_{k-1}$. Hence, by moving all vertices of $G$ contained in the clusters corresponding to these smaller cliques into $V_0$, the remaining graph $G''$ admits an $\mathcal{R}_{\varepsilon,k}^{1}(L)$-covering, while still satisfying $|V_0|=O(\varepsilon n)$. Similarly, we may assume that $\varphi_i = 0$ for all $i < i_0$, since the vertices belonging in clusters corresponding to such smaller cliques may also be absorbed into the
exceptional set $V_0$. Thus, we can apply Proposition~\ref{prop:varphik} to $R$ and derive equation~\eqref{eq:varphik}. For notational simplicity, we assume $i_0 = 1$.

The proof of Theorem~\ref{thm:maintheorem} is divided into two main cases: $\sigma<\omega$ and $\sigma=\omega$. In the first case, $\sigma<\omega$, we further distinguish two subcases: the \emph{general subcase}, where $s\geq \mu$, and the \emph{extremal subcase}, where $s<\mu$.

In the general subcase (Section~\ref{sec:general}), we show that $G''$ can be tiled by copies of $\mathcal{R}_{\varepsilon,k}^{\alpha'}(L_1)$, where $\alpha < \alpha' \le 1$ and $L_1 = CL$ for some constant $0 < C \le 1$.
Given one such $H$-tiling, the vertices of $V_0$ will be inserted into some clusters
of appropriate $\varepsilon$-regular $k$-cliques such that after we remove copies of $H$ containing new
vertices, the remaining parts of the cliques still contain almost perfect $H$-tilings. We further use the connections among different $\varepsilon$-regular $k$-cliques to reduce the total number
of uncovered vertices to a constant depending only on $H$.

In the extremal subcase (Section~\ref{sec:excase}), we prove that $V(G)$ can be partitioned into two sets $A$ and $B$. Here, $A$ is the union of several almost-independent sets $U_1, \ldots, U_t$ with $|U_i| \approx \frac{n}{k-1+\alpha} $. There are two cases for the set $B$:
\begin{enumerate}[{\rm(i)}]
    \item $B$ can be partitioned into almost-independent sets $U_{t+1}, \ldots, U_k$, where $|U_i| \approx \frac{n}{k-1+\alpha}$ for $i < k$ and $|U_k| \approx\frac{\alpha n}{k-1+\alpha}$;
    \item $B$ can be almost tiled by copies of $K(\omega, \ldots,\omega,\sigma)$.
\end{enumerate}
In either case, we show that all but a constant number of vertices can be covered by
an $H$-tiling.

Finally, suppose that $\sigma = \omega$. Then $H$ is the balanced complete $k$-partite graph $K_k(\omega)$. In this case, the vertex-shifting mechanism developed for the case $\sigma<\omega$ is no longer applicable, since that mechanism relies on exchanging a $\sigma$-class for an $\omega$-class and thereby shifting $\omega-\sigma$ unused vertices between clusters. When $\sigma=\omega$, this difference is zero. Nevertheless, the balanced case has a more flexible structure: the chromatic bottleneck present in the unbalanced case disappears. We therefore adapt the method used for $\sigma<\omega$ and give the details in Section~\ref{sec:balancedcase}.

\section{The general subcase when $s\ge \mu$}\label{sec:general}

As explained in Section~\ref{sec:proof}, we assume throughout this section that \(i_0=1\). Thus, by~\eqref{eq:varphik}, $s = \varphi_k - \sum_{i=2}^{k-1} (i-1)\varphi_{k-i} - (k - 1)\gamma$. In this section, we consider the general subcase $s\ge \mu$. In Subsection~\ref{subsec:dlemma}, we prove that $G''$ has an $\mathcal{R}_k^{\alpha'}(L_1)$-factor for some $\alpha<\alpha'\le 1$ and $L_1= CL$ for some constant $0<C\le 1$. Based on this statement, we prove that $G$ has an $H$-tiling that covers all but at most a constant number of vertices in Subsection~\ref{subsec:exvtx}.

\subsection{The decomposition lemma}\label{subsec:dlemma}

Define
\begin{align}\label{eq:alpha'}
    \alpha' = \alpha + \frac{\alpha(1-\alpha)}{k^2}\mu.
\end{align}
Without loss of generality, we assume $\mu =1/N$ for some integer $N$. Recall that $\alpha = \sigma/\omega$, so \[\alpha'=\frac{\sigma}{\omega}+\frac{\frac{\sigma}{\omega}(1-\frac{\sigma}{\omega})}{k^2}\cdot \frac{1}{N}=\frac{\omega \sigma k^2 N+\sigma\omega-\sigma^2}{\omega^2k^2N}\] is a rational number. Set $\alpha' = p/q$, where $p$ and $q$ are two integers with no
common factors. As $\omega \sigma k^2 N+\sigma\omega-\sigma^2-\omega^2 k^2 N=(\omega-\sigma)(\sigma-\omega k^2 N)< 0$, we can assume that $p< q \le \omega^2k^2N =\omega^2k^2/\mu$. The main result in this subsection is the following decomposition lemma.

\begin{lemma}[{\bf Decomposition Lemma}]\label{lem:DS}
If the reduced graph $R$ satisfies $s\ge \mu$, that is,
\[
\varphi_k \ge \sum_{i=2}^{k-1} (i-1)\varphi_{k-i} + (k-1)\gamma + \mu,
\]
then $G''$ can be decomposed into vertex-disjoint copies of $\mathcal{R}_k^{\alpha'}(L_1)$ for some $\alpha' = \alpha + \frac{\alpha(1-\alpha)}{k^2}\mu$ and $L_1 \ge c\mu L$.
\end{lemma}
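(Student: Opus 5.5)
The plan is to follow the decomposition strategy of Shokoufandeh and Zhao~\cite{zhao2003}, working clique by clique in the cluster-clique-cover $\Psi$ of $G''$. Every cluster has size $L$, divisible by $q$ and by a suitable multiple of $N$. Each $\varepsilon$-regular $k$-clique $\EuScript{K}^k\in\Psi_k$ is already a balanced $\mathcal{R}_{\varepsilon,k}^{1}(L)$. Removing a small unbalanced piece from it leaves pieces of the form $\mathcal{R}_k^{\alpha'}(L_1)$ once the part sizes are arranged correctly, and the arrangement is done by a linear-algebra count. The difficulty lies entirely with the smaller cliques $\EuScript{K}^{k-i}\in\Psi_{k-i}$, $i\ge1$. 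These cliques must be absorbed into $k$-partite structures by borrowing clusters from $k$-cliques.

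\textbf{Step 1: pairing smaller cliques with $k$-cliques.} Take $K\in\Phi_{k-i}$ and any $K^k\in\Lambda(K)$, so $K\hookrightarrow K^k$. Every vertex of $K$ is adjacent to exactly $k-1$ vertices of $K^k$. Hence, for each cluster $V\in K$ there is a unique cluster $V'\in K^k$ with $VV'\notin E(R)$, and $V$ together with $K^k-V'$ forms a $k$-clique in $R$. Carve a slice of proportion $\theta$ (to be chosen) out of each cluster of $K^k$. We then combine $K$ with slices of $k$-cliques in $\Lambda(K)$ so that each cluster $V$ of $K$ plays the role of the missing vertex $V'$. This produces $\varepsilon$-regular $k$-cliques in which the parts coming from $K$ are the small ($\alpha'$-sized) parts or the full parts, as needed.

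The relevant quantity for a $(k-i)$-clique is its ``deficit''. Each $k$-clique $\mathcal{R}_k^{\alpha'}(L_1)$ has one small part, so absorbing the $(k-i)$ clusters of $K$ consumes about $(i-1)$ units of capacity per unit of $\varphi_{k-i}$ from $\Phi_k$. The amount of $k$-clique mass needed is therefore roughly
\[
\sum_{i\ge2}(i-1)\varphi_{k-i}+(k-1)\gamma .
\]
The $(k-1)\gamma$ term pays for turning the balanced $k$-cliques into $\alpha'$-unbalanced ones: a balanced $k$-clique split into $\alpha'$-pieces leaves leftover mass proportional to $(1-\alpha')$, and this is exactly where $\gamma=\alpha/((k-1)(k-1+\alpha))$ enters. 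By~\eqref{eq:varphik} with $s\ge\mu$, the available mass $\varphi_k$ exceeds this requirement by $\mu$. That slack is what allows $\alpha'$ to be slightly larger than $\alpha$, namely by $\alpha(1-\alpha)\mu/k^2$.

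\textbf{Step 2: distributing the demand.} We must distribute the demands of all cliques in $\Phi_{k-i}$ among the $k$-cliques without overloading any single $k$-clique. Set this up as a fractional assignment (a bipartite flow or Hall-type argument). Each $K\in\Phi_{k-i}$ demands mass about $(i-1)/|\Lambda(K)|$ from each member of $\Lambda(K)$. Proposition~\ref{prop:lambdai} gives
\[
\lambda_i\ge(k-1)\gamma+\tfrac{i-1}{k-1}s+\cdots ,
\]
which ensures each small clique sees enough $k$-cliques. Proposition~\ref{prop:connectivity}(iii) controls the case where many small cliques attach to the same $k$-clique. In that case they all use the same part $A_i(K^k)$, and the multipartite structure of $A_i^j$ prevents double counting. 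Cliques that are over-connected to some $k$-clique are handled directly, since over-connection lets us re-form a larger clique and contradict maximality up to a bounded loss. The single exceptional clique in Proposition~\ref{prop:lambdai} is moved to $V_0$.

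\textbf{Step 3: finishing the decomposition.} After the assignment, every $k$-clique $\EuScript{K}^k$ is split into at most a bounded number of sub-cliques. The cluster parts are then cut into pieces of common size $L_1\ge c\mu L$ with ratio exactly $\alpha'=p/q$. Since $q\le\omega^2k^2/\mu$, all fractions involved are multiples of $1/(qN)$, and divisibility of $L$ makes every piece integral. Sub-pairs of $\varepsilon$-regular pairs of linear size remain $\varepsilon/(c\mu)$-regular, which is fine given $\varepsilon\ll\mu$.

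The main obstacle is Step 2. We must verify that the flow is feasible simultaneously for all $i$ while keeping each $k$-clique's leftover in the ratio $\alpha'$. I would first try to check Hall's condition using Proposition~\ref{prop:lowerbound} summed over any collection of small cliques, with the $\mu$ slack absorbing the $d+2\varepsilon$ errors.
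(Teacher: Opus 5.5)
Your outline points in the right direction: absorb the smaller cliques into well-connected $k$-cliques, with Proposition~\ref{prop:lambdai} controlling availability. But the step you defer as ``the main obstacle'' is the entire content of the lemma, and the accounting you offer for it is wrong.

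\textbf{Missing gadget and wrong accounting.} Your swap $V\leftrightarrow V'$ is never made into a gadget with fixed part sizes, so the cost of absorbing a clique is never determined. Your claim that $(k-1)\gamma$ pays for unbalancing the $k$-cliques is also false. A balanced $k$-clique splits into $k$ copies of $\mathcal R^{\alpha'}_{\varepsilon,k}$ with the small part rotated, with no loss.

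In the paper, take $\mathcal K\in\Psi_{k-i}$ and a well-connected sliced $\mathcal K^k$. Each cluster of $B(\mathcal K^k)$ is split into two parts. A part of size $\frac{i-1+\alpha'}{i}L'$, together with $A(\mathcal K^k)$, gives $i$ copies of $\mathcal R^{\alpha'}_{\varepsilon,k}(L'/i)$. A part of size $\frac{1-\alpha'}{i}L'$, together with slices of size $\frac{(1-\alpha')L'}{i-1+\alpha'}$ from the clusters of $\mathcal K$, gives $i$ more copies. So a $(k-l)$-clique costs $\frac{l-1+\alpha'}{1-\alpha'}$ whole $k$-cliques' worth, not $l-1$. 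Even a $(k-1)$-clique costs $\frac{\alpha'}{1-\alpha'}$, and that demand is what the $(k-1)\gamma$ term actually funds.

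\textbf{The inequality you need.} Processing the levels $i=1,\dots,k-1$ greedily after a $(q-p)$-partition of $\Psi_k$, one needs
\[
\lambda_i>\sum_{l=1}^{i}\frac{l-1+\alpha'}{1-\alpha'}\,\varphi_{k-l}\qquad(1\le i\le k-1).
\]
The paper proves this in two steps.
\begin{enumerate}
\item It shows $I_i=\lambda_i-\sum_{l\le i}\frac{l-1+\alpha}{1-\alpha}\varphi_{k-l}\ge\frac{k\alpha}{(k-1)(1-\alpha)}s$, using Proposition~\ref{prop:lambdai}, \eqref{eq:sum1} and \eqref{eq:varphik}. The key cancellation is $\frac{\alpha}{1-\alpha}\bigl(\frac{1}{k-1}-k\gamma\bigr)=(k-1)\gamma$.
\item It shows that replacing $\alpha$ by $\alpha'=\alpha+\alpha(1-\alpha)\mu/k^2$ changes the sum by strictly less than $\frac{k\alpha}{(k-1)(1-\alpha)}\mu$.
\end{enumerate}
This is exactly where $s\ge\mu$ and the specific value of $\alpha'$ enter, and your proposal never connects them. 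Your suggested substitute, checking Hall's condition via Proposition~\ref{prop:lowerbound}, does not work as stated. That proposition only lower-bounds degree sums. It gives no control of $\bigl|\bigcup_K\Lambda(K)\bigr|$ unless you redo the computation behind Proposition~\ref{prop:lambdai}.

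\textbf{Over-connected cliques.} Your treatment of these is incorrect. Proposition~\ref{prop:connectivity}(i) bounds $e(K^i,K^j)$ only for $j<k$. A clique in $\Phi_{k-i}$ can genuinely be over-connected to a $k$-clique without contradicting maximality, because a $k$-clique-cover has no clique of order $k+1$ to gain; for example, take a singleton complete to some $K^k$. Proposition~\ref{prop:lambdai} only counts well-connected $k$-cliques for cliques \emph{not} over-connected to any $k$-clique, so these cliques need their own mechanism.

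The paper splits them into two kinds:
\begin{itemize}
\item Typical cliques, over-connected to fewer than $c_t$ $k$-cliques. These few $k$-cliques are simply ignored in the count, since $c_t\ll\ell$.
\item Atypical cliques. These are absorbed using their own over-connected $k$-cliques: if $e(K,K^k)\ge(k-i)(k-1)$, then at least $i$ clusters of $K^k$ are complete to $K$ and can serve as $B(\mathcal K^k)$. The paper argues via Proposition~\ref{prop:connectivity}(ii) that these $k$-cliques are not needed by any other small clique.
\end{itemize}
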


To prove Lemma~\ref{lem:DS}, we introduce some necessary terminology and an algorithm.
For a given $i\in [1, k-1]$, consider an $\varepsilon$-regular $(k-i)$-clique $\mathcal{K} \in \Psi_{k-i}$, with corresponding clique $K \in \Phi_{k-i}$. An $\varepsilon$-regular $k$-clique $\mathcal{K}^k$ is \emph{good} for $\mathcal{K}$ if its corresponding clique $K^k \in \Phi_k$ is either well-connected or over-connected to $K$; otherwise, $\mathcal{K}^k$ is \emph{bad} for $\mathcal{K}$. We call $\mathcal{K}$ \emph{typical} if $|\{K^k \in \Phi_k : K \overconnect K^k\}| < c_t$ for a constant $c_t = \left\lceil \frac{k-1+\alpha'}{1-\alpha'} \right\rceil$.  Otherwise, $\mathcal{K}$ is \emph{atypical}. To formulate the decomposition algorithm for $G''$, we rely on the following lemma, which states that sufficiently large restrictions of regular pairs remain regular.

\begin{lemma}[{\bf Slicing Lemma}]\label{lem:SL}
    Let $V_i$ and $V_j$, $1 \le i < j \le \ell$, be two clusters in $G''$ corresponding to the two endvertices of an edge $e$ in the reduced graph $R$. Partition $V_i$ and $V_j$ into $s$ and $t$ subclusters $\{V_i^1, \ldots, V_i^s\}$ and $\{V_j^1, \ldots, V_j^t\}$, respectively, such that the size of the smallest subclusters is $cL$ for some $\varepsilon \ll c \le 1$. Then the pairs $(V_i^p, V_j^q)$ for $ p\in [1, s]$ and $q\in [1, t]$ are $\varepsilon'$-regular pairs with $\varepsilon' =\min\{\varepsilon/2, \varepsilon/c\}$. In particular, if $s = t$, then $(V_i^p, V_j^p)$ are $s$ disjoint $\varepsilon'$-regular pairs for $p\in [1, s]$.
\end{lemma}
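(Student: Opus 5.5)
The plan is to follow the standard "subpairs inherit regularity" argument directly from the definition of $\varepsilon$-regularity, with the stated parameter $\varepsilon'=\min\{\varepsilon/2,\varepsilon/c\}$ as the target. Since $c\le 1$, we have $\varepsilon/c\ge\varepsilon$, so $\varepsilon'=\varepsilon/2$. Fix $p\in[1,s]$ and $q\in[1,t]$, and write $A=V_i^p$ and $B=V_j^q$. By hypothesis $|A|,|B|\ge cL$, and $|V_i|=|V_j|=L$. Since $e$ is an edge of $R$, Lemma~\ref{lemma:Regularity} gives that $(V_i,V_j)$ is $\varepsilon$-regular in $G'$ with density greater than $d$.

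The first step is to compare densities. I take $X=A$ and $Y=B$ in the definition of regularity of $(V_i,V_j)$; this is allowed once $|A|>\varepsilon L$ and $|B|>\varepsilon L$, which follows from $|A|,|B|\ge cL$ and $\varepsilon\ll c$. The conclusion is $|d(A,B)-d(V_i,V_j)|<\varepsilon$. The second step is to take arbitrary $X\subseteq A$ and $Y\subseteq B$ with $|X|>\varepsilon'|A|$ and $|Y|>\varepsilon'|B|$, and to apply regularity of $(V_i,V_j)$ to the pair $(X,Y)$. This gives $|d(X,Y)-d(V_i,V_j)|<\varepsilon$, and the triangle inequality then bounds $|d(X,Y)-d(A,B)|$. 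The "in particular" clause for $s=t$ is immediate: the pairs $(V_i^p,V_j^p)$ are pairwise vertex-disjoint because the subclusters partition $V_i$ and $V_j$, and each of them is an instance of the general case.

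The main obstacle is bookkeeping the constants so that the conclusion really holds with $\varepsilon'$ and not a weaker parameter. Two places need care. First, for the second step to be admissible we need $\varepsilon'|A|\ge\varepsilon L$; with $|A|\ge cL$ this requires $\varepsilon'\ge\varepsilon/c$. Second, the triangle inequality gives a deviation bound of $2\varepsilon$, and this must be brought under $\varepsilon'$. As worded, with $\varepsilon'=\varepsilon/2$, the naive argument fails on both counts: it only yields regularity with parameter $\max\{2\varepsilon,\varepsilon/c\}$.

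I would try first to recover the stated constant by exploiting the freedom in the setup. The value of $\varepsilon$ fed into Lemma~\ref{lemma:Regularity} can be chosen smaller than the $\varepsilon$ used downstream. If the original pairs are $\varepsilon_0$-regular with $\varepsilon_0\le c\varepsilon/4$, then the argument above gives $\varepsilon/2$-regularity of every subpair. This is harmless because only $\varepsilon\ll c$ is ever used later, for example in Lemma~\ref{lemma:blowup}. If instead the statement is meant with the same $\varepsilon$ on both sides, the proof will establish regularity with parameter $\max\{2\varepsilon,\varepsilon/c\}$. That parameter is still $\ll d$ and suffices for the subsequent applications. I would record it explicitly as the parameter actually used in the decomposition algorithm.
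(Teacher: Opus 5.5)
The paper gives no proof of this lemma; it is quoted as the standard Slicing Lemma of Koml\'{o}s and Simonovits. Your argument is exactly the standard proof: apply $\varepsilon$-regularity of $(V_i,V_j)$ once to $(A,B)$ and once to $(X,Y)$, then use the triangle inequality. It shows that every $(V_i^p,V_j^q)$ is $\max\{2\varepsilon,\varepsilon/c\}$-regular and that $|d(V_i^p,V_j^q)-d(V_i,V_j)|<\varepsilon$.

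Your diagnosis of the constant is correct, and you should state it more firmly than ``the naive argument fails'': the lemma as printed is false. Take $s=t=1$ and $c=1$. The lemma would then assert that every $\varepsilon$-regular pair is $\varepsilon/2$-regular. This fails, for example, for a typical random bipartite graph of density $1/2$ on two $L$-sets in which a complete bipartite graph on two sets of size $\varepsilon^{3/2}L$ has been planted. Subsets of size above $\varepsilon L$ see a density deviation of at most about $\varepsilon/2$. But two subsets of size just above $\varepsilon L/2$ that contain the planted block see a deviation of about $2\varepsilon$.

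The intended constant is $\max\{2\varepsilon,\varepsilon/c\}$, which is the standard form of the lemma. The paper's own remark right after the statement confirms this: it says $\varepsilon$ ``has changed to $\varepsilon'$'' but will still be written $\varepsilon$, so only a degraded parameter is used downstream.

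So do not try to recover $\varepsilon/2$; prove the $\max$ version, as you did. Your first workaround, running Lemma~\ref{lemma:Regularity} with $\varepsilon_0\le c\varepsilon/4$, is the same statement with the parameter renamed. It is not a proof of the printed constant.
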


When each cluster in an $\varepsilon$-regular clique $\mathcal{K}$ is evenly partitioned into $s$ parts, we obtain
$s$ new $\varepsilon'$-regular cliques consisting of these smaller clusters by Lemma~\ref{lem:SL}. We call this operation an $s$-\emph{partition} of $\mathcal{K}$. Although $\varepsilon$ has changed to $\varepsilon'$ in the new regular pairs, for notational simplicity, we always use $\varepsilon$ as the parameter. Naturally, we call a new
clique \emph{good} (or \emph{bad}) if it is derived from a good (or bad) larger clique $\mathcal{K}$.

Since $L$ has been chosen in a way that it can be divided by particular values, we assume
that all divisions used in the following algorithm are integral.

\textbf{Decomposition Algorithm for $G''$:}
\begin{enumerate}[{\bf Step 1}]
\item \label{step1} Perform a $(q-p)$-partition of every clique in $\Psi_k$. Denote the resulting family of $k$-cliques by $\Psi_k'$.

        We use $L'$ to denote the size of each new subcluster, that is, $L' = \frac{L}{q-p}>\frac{L}{q}\geq\frac{\mu L}{\omega^2k^2}$.

\item \label{step2} For each $i\in [1,k-1]$, convert every typical clique in $\Psi_{k-i}$ into copies of unbalanced $\varepsilon$-regular $k$-cliques.

Let $\mathcal K\in \Psi_{k-i}$ be a typical $\varepsilon$-regular $(k-i)$-clique. Each cluster of $\mathcal K$ has size $L$. We eliminate $\mathcal K$ using suitable well-connected $k$-cliques from $\Psi_k'$, whose clusters have size $L'=\frac{L}{q-p}$.

Let $\mathcal K^k\in \Psi_k'$ be a well-connected $k$-clique for $\mathcal K$. By Proposition~\ref{prop:connectivity}(iii), the clusters of $\mathcal K^k$ can be partitioned into two parts
$A(\mathcal K^k)\cup B(\mathcal K^k)$, where $|A(\mathcal K^k)|=k-i$ and $|B(\mathcal K^k)|=i$.
The clusters in $A(\mathcal K^k)$ correspond to the clusters of $\mathcal K$, while the clusters in $B(\mathcal K^k)$ will be split.
For each cluster $V\in B(\mathcal K^k)$, partition $V$ into two parts $V=V^{\rm large}\cup V^{\rm small}$,
where $|V^{\rm large}|=\frac{i-1+\alpha'}{i}L'$ and $|V^{\rm small}|=\frac{1-\alpha'}{i}L'$.
We now deal with the large parts and the small parts separately.

\textbf{Large parts.}
Let $t_1=\frac{L'}{i}$. Each cluster in $A(\mathcal K^k)$ has size $L'=it_1$,
and each large part $V^{\rm large}$, where $V\in B(\mathcal K^k)$, has size $\frac{i-1+\alpha'}{i}L'=(i-1+\alpha')t_1$.
Hence the clusters in $A(\mathcal K^k)$, together with the large parts of the clusters in $B(\mathcal K^k)$, form an $\varepsilon$-regular $k$-partite graph with part sizes
\[\underbrace{it_1,\ldots,it_1}_{k-i},
\underbrace{(i-1+\alpha')t_1,\ldots,(i-1+\alpha')t_1}_{i}.\]
This graph can be decomposed into $i$ copies of $\mathcal R_{\varepsilon,k}^{\alpha'}(t_1)=\mathcal R_{\varepsilon,k}^{\alpha'}\left(\frac{L'}{i}\right)$.

\textbf{Small parts.}
Let $t_2=\frac{(1-\alpha')L'}{i(i-1+\alpha')}$.
Then each small part $V^{\rm small}$, where $V\in B(\mathcal K^k)$, has size $\frac{1-\alpha'}{i}L'=(i-1+\alpha')t_2$.
From each cluster of $\mathcal K$, take a subset of size $it_2=\frac{(1-\alpha')L'}{i-1+\alpha'}$.
Together with the small parts of the clusters in $B(\mathcal K^k)$, these chosen subsets form an $\varepsilon$-regular $k$-partite graph with part sizes
\[\underbrace{it_2,\ldots,it_2}_{k-i},
\underbrace{(i-1+\alpha')t_2,\ldots,(i-1+\alpha')t_2}_{i}.\]
This graph can be decomposed into $i$ copies of $\mathcal R_{\varepsilon,k}^{\alpha'}(t_2)=\mathcal R_{\varepsilon,k}^{\alpha'}\left(\frac{(1-\alpha')L'}{i(i-1+\alpha')}\right)$.

Thus, using one well-connected $k$-clique $\mathcal K^k\in\Psi_k'$, we obtain copies of
$\mathcal R_{\varepsilon,k}^{\alpha'}\left(\frac{L'}{i}\right)$ and $\mathcal R_{\varepsilon,k}^{\alpha'}\left(
\frac{(1-\alpha')L'}{i(i-1+\alpha')}\right)$, and we remove $\frac{(1-\alpha')L'}{i-1+\alpha'}$ vertices from each cluster of $\mathcal K$.
Since each cluster of $\mathcal K$ has size $L$, the number of well-connected $k$-cliques needed to eliminate $\mathcal K$ is $\frac{L}{\frac{(1-\alpha')L'}{i-1+\alpha'}}=\frac{(i-1+\alpha')L}{(1-\alpha')L'}$.
Repeating the above procedure with this many well-connected $k$-cliques from $\Psi_k'$, we eliminate $\mathcal K$ completely.

\item \label{step3}  For each $i\in[1,k-1]$, convert every atypical clique in $\Psi_{k-i}$ into copies of unbalanced $\varepsilon$-regular $k$-cliques.

The procedure is the same as in Step~\ref{step2}, except that we now use $k$-cliques which are over-connected to the atypical clique $\mathcal K$. If $K^k$ is over-connected to $K\in\Phi_{k-i}$, then at least $i$ vertices of $K^k$ are adjacent to every vertex of $K$. Indeed, if $a$ vertices of $K^k$ are adjacent to all vertices of $K$, then  \[e(K,K^k)\le a(k-i)+(k-a)(k-i-1)=k(k-i-1)+a. \]
    Since $e(K,K^k)\ge (k-i)(k-1)$, it follows that $a\ge i$. We choose $i$ such clusters to form $B(\mathcal K^k)$, and let the remaining clusters form $A(\mathcal K^k)$.

    \item \label{step4} Partition the remaining $k$-cliques in $\Psi_k'$ and all (different-size) unbalanced $\varepsilon$-regular $k$-cliques into copies of $\mathcal{R}_k^{\alpha'}(L_1)$.

    Here $L_1 = c_l L$, where the constant $c_l>0$ is  a multiple of $\mu$.
\end{enumerate}

\begin{proof}[Proof of Lemma~\ref{lem:DS}]
Note that Lemma~~\ref{lem:DS} holds if the Decomposition Algorithm given above is correct. So we prove the correctness of the algorithm. For Step~\ref{step3}, by Proposition~\ref{prop:connectivity}(ii), any $k$-clique that is over-connected to $\mathcal{K}$ does not participate in the conversion of any other (typical or atypical) clique. Since each $\mathcal{K} \in \Psi_{k-i}$ requires $\frac{(i-1+\alpha')L}{(1-\alpha')L'}$ good $k$-cliques in $\Psi_k'$, the total number of over-connected $k$-cliques is at least
\[
c_t \frac{L}{L'} = \left\lceil \frac{k-1+\alpha'}{1-\alpha'} \right\rceil \frac{L}{L'} \ge \frac{(i-1+\alpha')L}{(1-\alpha')L'}, \quad \text{for } i < k.
\]
Thus Step~\ref{step3} can be performed for every atypical clique.

During Step~\ref{step2}, we considered all typical cliques in $\Psi_{k-1}, \ldots, \Psi_1$ sequentially. For a typical clique $\mathcal{K} \in \Psi_{k-i}$, we can ignore the existence of its over-connected $k$-cliques in
our computation, since $c_t \ll \ell$. Thus, the term $\lambda_i \ell$ in Proposition~\ref{prop:lambdai} gives a lower bound on the number of well-connected $k$-cliques in $\Psi_k$. It follows that the new family $\Psi_k'$ contains at least $\lambda_i \ell L / L'$ well-connected $k$-cliques for $\mathcal{K}$. We need to show
\[\frac{\lambda_i \ell L}{L'}>\sum_{l=1}^i \frac{(l-1+\alpha')L}{(1-\alpha')L'} \varphi_{k-l} \ell.\]
Equivalently, we need to prove
\begin{align}\label{eq:Ii}
  \lambda_i - \sum_{l=1}^i \frac{l-1+\alpha'}{1-\alpha'} \varphi_{k-l} > 0.
\end{align}
Firstly, we compute
\begin{align*}
 I_i =\lambda_i - \sum_{l=1}^i \frac{l-1+\alpha}{1-\alpha} \varphi_{k-l}.
\end{align*}
By Proposition~\ref{prop:lambdai}, $\lambda_1\geq (k-1)\gamma$.
By~\eqref{eq:sum1}, $\varphi_{k-1} = \frac{1}{k-1} \left( 1 - k\varphi_k - \sum_{j=2}^{k-1}(k-j)\varphi_{k-j} \right).$
Then
\begin{align}
I_1 &=\lambda_1- \frac{\alpha}{1-\alpha} \varphi_{k-1}
\geq (k-1)\gamma - \frac{\alpha}{1-\alpha} \frac{1}{k-1} \left(1 - k\varphi_k - \sum_{j\ge2}(k-j)\varphi_{k-j}\right)\nonumber\\
&{\overset{\eqref{eq:varphik}}=} (k-1)\gamma - \frac{\alpha}{1-\alpha} \frac{1}{k-1}  \left(1 - k\bigl((k-1)\gamma + s + \sum_{j\ge2} (j-1)\varphi_{k-j}\bigr) - \sum_{j\ge2} (k-j)\varphi_{k-j}\right)\nonumber \\
&=(k-1)\gamma - \frac{\alpha}{1-\alpha} \frac{1}{k-1}-\frac{\alpha}{1-\alpha} \left( - k\gamma - \frac{k}{k-1} s - \sum_{j\ge2} j\varphi_{k-j} \right) \nonumber\\
& =\frac{k}{k-1} \frac{\alpha}{1-\alpha} s + \frac{\alpha}{1-\alpha} \sum_{j\ge2} j\varphi_{k-j},
\label{eq:I1}
\end{align}
where the last equality holds as $\gamma = \frac{\alpha}{(k-1)(k-1+\alpha)}$.
Similarly,
\begin{align*}
I_2 &=\lambda_2 - \frac{\alpha}{1-\alpha} \varphi_{k-1}-\frac{1+\alpha}{1-\alpha} \varphi_{k-2}\\
&\ge \frac{s}{k-1} + \frac{k}{k-1} \frac{\alpha}{1-\alpha} s + \sum_{j\ge3} \varphi_{k-j} + \frac{\alpha}{1-\alpha} \sum_{j\ge3} j\varphi_{k-j}.
\end{align*}

In general,
\begin{align}\label{eq:Ii2}
    I_i \ge \left( \frac{i-1}{k-1} + \frac{k\alpha}{(k-1)(1-\alpha)} \right) s + \sum_{j>i} \left( i-1 + \frac{\alpha}{1-\alpha} j \right) \varphi_{k-j}\ge \frac{k\alpha}{(1-\alpha)(k-1)} s  > \mu_1,
\end{align}
where $\mu_1 = \frac{k\alpha}{(1-\alpha)(k-1)} \mu$ and the last inequality holds as $s\ge \mu$.

For all $l\in[1,k-1]$, we have $\frac{l-1+\alpha'}{1-\alpha'}-\frac{l-1+\alpha}{1-\alpha}=\frac{l(\alpha'-\alpha)}{(1-\alpha')(1-\alpha)}$.
Hence, using~\eqref{eq:alpha'} and the fact that $\sum_{l=1}^{i}\varphi_{k-l}\leq 1$, we have
\begin{align}\label{eq:mu1}
  \sum_{l=1}^i \varphi_{k-l} \left( \frac{l-1+\alpha'}{1-\alpha'} - \frac{l-1+\alpha}{1-\alpha} \right)\le \mu_1< I_i.
\end{align}
So \[\lambda_i - \sum_{l=1}^i \frac{l-1+\alpha'}{1-\alpha'} \varphi_{k-l}=I_i - \sum_{l=1}^i \varphi_{k-l} \left( \frac{l-1+\alpha'}{1-\alpha'} - \frac{l-1+\alpha}{1-\alpha} \right) > 0.\]
Thus~\eqref{eq:Ii} holds, and Step~\ref{step2} can be carried out.
\end{proof}

\subsection{Handling exceptional vertices}\label{subsec:exvtx}

Let $h=|V(H)|$. The general subcase $s\geq \mu$ follows from the following lemma.

\begin{lemma}\label{lem:generalcase}
Assume $\varepsilon \le \theta \ll \rho \ll 1$, and let $G$ be an $n$-vertex graph satisfying the Ore-type condition~\eqref{eq:degree}, containing an exceptional vertex set $V_0$ with $|V_0| \le \theta n$. If the
subgraph $G'' = G \setminus V_0$ has an $\mathcal{R}_{\varepsilon,k}^{\alpha'}(L_1)$-factor with $\alpha' = \alpha +\rho$ and $L_1$ is sufficiently large, then $G$ contains an $H$-tiling that covers all but at most $\left(\dfrac{5k^2}{(k-1)^2\gamma} \omega+h\right)$ vertices.
\end{lemma}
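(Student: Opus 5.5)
The plan follows the exceptional-vertex argument of Shokoufandeh and Zhao~\cite{zhao2003}, with the minimum-degree condition replaced by the Ore-type condition~\eqref{eq:degree}.

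\textbf{Step 1: low-degree vertices.} Let $D=\{v\in V(G): d(v)<(1-\frac1{k-1}+\gamma)n\}$. By~\eqref{eq:degree}, any two vertices of $D$ are adjacent, so $D$ is a clique. Every copy of $K_h$ contains $H$. Hence I would greedily tile $D\cap V_0$ by vertex-disjoint copies of $H$ lying inside $D$, leaving at most $h-1$ vertices of $D\cap V_0$ uncovered; these are the source of the additive $h$ in the bound. Such a copy of $H$ may also use cluster vertices of $D$. If so, I would delete those vertices from their clusters, and also throw them away. This removes at most $O(\theta n)$ vertices from the $\mathcal{R}_{\varepsilon,k}^{\alpha'}(L_1)$-factor. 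Together with the standard cleaning step, in which we move into $V_0$ the $O(\varepsilon L_1)$ vertices of each cluster with low degree into some neighbouring cluster, every clique of the factor becomes $(2\varepsilon,d/2)$-super-regular. It stays unbalanced with ratio close to $\alpha'$, and the new exceptional set $V_0'$ still has size $O(\theta n)$.

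\textbf{Step 2: inserting high-degree exceptional vertices.} Every remaining $x\in V_0'$ has $d(x)\ge(1-\frac1{k-1}+\gamma)n=\frac{k-2+\alpha}{k-1+\alpha}n$. Fix a clique $Q$ of the factor with parts $W_1,\dots,W_{k-1}$ of size $L_1$ and neck $W_k$ of size $\alpha' L_1$. Say $x$ is \emph{attachable} to $Q$ if there is an index $j$ such that $x$ has at least $\rho' |W_{j'}|$ neighbours in $W_{j'}$ for every $j'\ne j$, where $\rho\gg\rho'\gg\theta$.

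I would show by averaging that $x$ is attachable to at least $c(\rho)\cdot(\#\text{cliques})$ cliques. If $x$ is not attachable to $Q$, then $x$ misses almost all of two parts of $Q$. Its degree into $Q$ is then at most about $(k-2)L_1+\rho' k L_1$, which as a fraction of $|Q|=(k-1+\alpha')L_1$ is at most roughly $\frac{k-2}{k-1+\alpha'}$. Since $\alpha'=\alpha+\rho$, this fraction is strictly below $\frac{k-2+\alpha}{k-1+\alpha}$ by a margin of order $\rho$. Summing over all cliques then contradicts the lower bound on $d(x)$ unless many cliques are attachable.

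Because $|V_0'|\le O(\theta n)$ and $\theta\ll\rho$, we can assign the vertices of $V_0'$ to attachable cliques so that each clique receives $o(\varepsilon L_1)$ of them. For each assigned $x$, I would use the Blow-up Lemma (Lemma~\ref{lemma:blowup}) on the super-regular clique to find a copy of $H$ through $x$. Vertex $x$ plays the role of a vertex in class $j$, and the rest of the copy lies in the sets $N(x,W_{j'})$ for $j'\ne j$ together with $W_j$. We then delete these copies.

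\textbf{Step 3: rebalancing the leftover cliques.} Each clique now has part sizes perturbed by a small amount, so it is no longer exactly divisible as a blow-up of $H$. Using $\alpha'>\alpha$, the leftover can be tiled almost perfectly by copies of $H$ with different orientations, meaning a choice of which part of $Q$ hosts the neck of $H$. Each choice of orientation shifts the part sizes by multiples of $\omega-\sigma$ relative to one another, so the remaining discrepancy in each clique can be reduced below $O(\omega)$ vertices per part.

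I expect this last step to be the \emph{main obstacle}: reducing the total number of uncovered vertices across all cliques to a constant independent of $\ell$. The plan is to use connections between cliques in $R$, following Proposition~\ref{prop:connectivity} and the Ore-type bound of Proposition~\ref{prop:lowerbound}. Consider an auxiliary graph or digraph on the cliques, in which an arc means that a copy of $H$ can straddle two cliques and thereby move $\omega-\sigma$ surplus vertices from one clique to the other. Residues are then pushed along paths in this structure and cancelled.

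The degree surplus $\gamma$ controls how many cliques must remain ``unfixable''. An averaging argument should show that at most about $\frac{5k^2}{(k-1)^2\gamma}$ cliques retain a residue, each of size at most $\omega$. This gives the claimed bound $\frac{5k^2}{(k-1)^2\gamma}\omega+h$. Verifying that these transfers exist with the stated constant, using only the Ore-type condition rather than a minimum-degree condition, is the delicate part of the argument.
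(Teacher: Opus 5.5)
\textbf{There is a genuine gap in the final step, which is the only step that makes the bound independent of $\varepsilon$.} Your Steps 1--2 and the per-element part of Step 3 essentially match the paper:
\begin{itemize}
\item low-degree exceptional vertices form a clique;
\item high-degree exceptional vertices are inserted into factor elements in which they see $k-1$ clusters, found by averaging against the part sizes $(1,\dots,1,\alpha')$;
\item switching orientations inside an element leaves at most $(k-1)(2\omega-\sigma)+\omega$ vertices per element.
\end{itemize}
For the final step you only assert that an averaging argument ``should show'' that about $\frac{5k^2}{(k-1)^2\gamma}$ cliques keep a residue. That number is read off the statement rather than derived. The mechanism you sketch is also not the one that works. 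Shifting $\omega-\sigma$ vertices is the within-element orientation switch; between elements one moves single vertices. The clique-cover propositions you cite play no role here.

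What is needed is the following.
\begin{enumerate}
\item A digraph $\mathcal{D}$ on clusters, with an arc $(U,U_1)$ whenever $U_1$ lies in a factor element $\mathcal{R}$ and $U$ is adjacent in $R$ to every other cluster of $\mathcal{R}$. Then a surplus vertex of $U$ can take the place of a $U_1$-vertex in a copy of $H$ inside $\mathcal{R}$.
\item The bound $d^+_{\mathcal{D}}(U)\ge\frac{(k-1)^2\gamma}{k}|V(\mathcal{D})|$ for every cluster with $d_R(U)\ge(1-\frac{1}{k-1}+\gamma-d-2\varepsilon)\ell$. This follows from your Step 2 averaging applied to $U$ in $R$, and it is where $\alpha'>\alpha$ and $\gamma$ enter.
\item The fact that a digraph of minimum out-degree $\delta$ has a sink set of size at most $|V|/(\delta+1)$. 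Hence all surplus drains into at most $\frac{k}{(k-1)^2\gamma}$ clusters, and each of their elements then leaves at most $(k-1)(2\omega-\sigma)+\omega$ vertices.
\end{enumerate}

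\textbf{The Ore-specific obstacle is left open.} You flag it but do not resolve it: the out-degree bound fails for clusters of low $R$-degree, and unlike in the minimum-degree setting such clusters may exist. The paper's fix uses the Ore condition on $R$, under which the low-degree clusters $J_L$ form a clique in $R$. Surplus in any cluster that can reach $J_L$ is pushed into $J_L$ and absorbed there at a cost of at most $k$ vertices. The sink-set lemma is then applied only to $\mathcal{D}-\mathcal{W}(J_L)$, which consists of high-degree clusters and has no arcs leaving it. Without such a device your plan yields no constant bound.

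\textbf{Smaller issues.}
\begin{itemize}
\item With $|V_0'|$ of order $\theta n\ge\varepsilon n$, you cannot give each clique only $o(\varepsilon L_1)$ exceptional vertices. What you need is a per-element loss $\ll\rho L_1$, which you get by capping each element at $\sqrt{\theta}L_1$ uses. Each element then keeps a balanced sub-clique that supplies the orientation flexibility.
\item The super-regularity cleaning should be done, or redone, after the insertions.
\item The margin in your Step 2 averaging is of order $\alpha$, not $\rho$.
\end{itemize}
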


Before proving Lemma~\ref{lem:generalcase}, we introduce some notation. Let $\mathbf{R}$ be the $\mathcal{R}_{\varepsilon,k}^{\alpha'}(L_1)$-factor of $G''$, and let $\mathcal{R}\in \mathbf{R}$ be an unbalanced $\varepsilon$-regular $k$-clique $\mathcal{R}_{\varepsilon,k}^{\alpha'}(L_1)$. We write $\mathcal{R}=\{U_1,\ldots,U_k\}$, where $U_k$ denotes the smallest cluster, of size $\alpha' L_1$, and the other clusters have size $L_1$. When no confusion arises, we simply call $\mathcal{R}$ a \emph{clique}.

\begin{proof}[Proof of Lemma~\ref{lem:generalcase}]
The proof consists of two phases. In Phase I, we insert the exceptional vertices into suitable regular $k$-cliques and cover them by copies of $H$. In Phase II, we use the connections between different regular $k$-cliques to reduce the number of uncovered vertices to a constant depending only on $H$.

First, partition $V_0$ into two subsets. Let
$V_0^1 = \left\{x \in V_0 : d_G(x) < \left(1 - \frac{1}{k-1} + \gamma\right)n\right\}$ and $V_0^2 = V_0 \setminus V_0^1$. By the Ore-type condition~\eqref{eq:degree}, $V_0^1$ induces a clique in $G$. Then $G[V_0^1]$ can be tiled by copies of $H$, leaving at most $h-1$ vertices uncovered. It remains to cover the vertices of $V_0^2$. We will remove pairwise disjoint copies of $H$, each containing exactly one vertex of $V_0^2$, while ensuring that no cluster in the factor $\mathbf{R}$ loses too many vertices.

\medskip
\noindent\textbf{Phase I: Covering the exceptional vertices.}

We shall use the following corollary of the Key Lemma~\cite{komlos1995}.

\begin{lemma}[Koml\'{o}s and Simonovits~\cite{komlos1995}]\label{lem:komlos1995}
    Let $\mathcal{R} = \{V_1, \ldots, V_k\}$ be an $\varepsilon$-regular $k$-clique in $G''$. Assume $W_i \subset V_i$ with $|W_i| = d' L$ for all  $i \in[1, k]$, where $d' \gg \varepsilon$. Let $\mathcal{R}_0 = \{W_1, \ldots, W_k\}$. Then $K_k(h) \subset \mathcal{R}_0$. In particular, $H \subset \mathcal{R}_0$, and we can put its small color-class in any $W_i$.
\end{lemma}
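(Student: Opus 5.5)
The statement to prove is Lemma~\ref{lem:komlos1995}. The plan is to derive it from the Key Lemma of Koml\'{o}s and Simonovits, with the Slicing Lemma (Lemma~\ref{lem:SL}) as the bridge.

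\textbf{Step 1: restrict to the $W_i$.} Each $W_i$ has size $d'L$ with $d'\gg\varepsilon$. Since $(V_i,V_j)$ is $\varepsilon$-regular with density greater than $d$ (it corresponds to an edge of $R$), regularity gives
\[
|d(W_i,W_j)-d(V_i,V_j)|<\varepsilon .
\]
Hence $d(W_i,W_j)>d-\varepsilon\ge d/2$. Lemma~\ref{lem:SL}, applied with $c=d'$, shows that $(W_i,W_j)$ is $\varepsilon/d'$-regular. Because $\varepsilon\ll d'$, we have $\varepsilon/d'\ll d$. So $\mathcal{R}_0$ is an $\varepsilon''$-regular $k$-clique with all pair densities at least $d/2$, where $\varepsilon''=\varepsilon/d'$ is small compared with $d$.

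\textbf{Step 2: embed $K_k(h)$ via the Key Lemma.} Apply the Key Lemma with reduced graph $K_k$ and target graph $K_k(h)$, whose maximum degree is $(k-1)h$. Its hypothesis is $\varepsilon''<\varepsilon_0(d/2,(k-1)h)$ together with cluster sizes $d'L$ that are large compared with $h$. This holds because $\varepsilon\ll d$ and $L$ is large, since $n$ is large. The conclusion is that the blow-up of $K_k$ with parts of size $h$, namely $K_k(h)$, embeds in $\mathcal{R}_0$ with each part lying in its own $W_i$.

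If one prefers to avoid the Key Lemma, the same embedding follows from the standard greedy counting argument. Pick vertices one part at a time, and keep each candidate set of common neighbourhoods of size at least $(d/2-\varepsilon'')^{h(k-1)}d'L\gg\varepsilon''d'L$. Regularity then lets us continue, discarding at most $\varepsilon''$-fractions of atypical vertices at each step. Checking that these candidate sets stay above the regularity threshold throughout is the only real obstacle, and it is handled by the hierarchy $\varepsilon\ll d,d'$.

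\textbf{Step 3: embed $H$ with its small class where we want it.} $H$ is the complete $k$-partite graph with part sizes $(\sigma,\omega,\ldots,\omega)$, and $\sigma\le\omega\le h$. So $H\subseteq K_k(h)$. Because $K_k(h)$ is symmetric under permuting its parts, we may send the $\sigma$-class to the part lying in any prescribed $W_i$ and the $\omega$-classes to the remaining parts. Since $K_k(h)$ itself embeds with parts in the $W_i$ respectively (Step~2), this places the small colour class of $H$ in $W_i$ for any chosen $i$, completing the proof.
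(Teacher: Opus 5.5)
Your proposal is correct and takes the same route as the paper, which gives no separate argument and simply cites this lemma as a corollary of the Koml\'{o}s--Simonovits Key Lemma. Your slicing step is the bookkeeping needed to apply the Key Lemma to the $W_i$: it uses the density $>d$ hypothesis (implicit from $\mathcal{R}$ coming from edges of $R$) and requires $\varepsilon$ to be small relative to both $d$ and $d'$.
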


For a vertex $v \in V_0^2$ and a cluster $U \in V(R)$, we write $v \sim U$ if $d_G(v, U) \ge d|U|$.

The procedure of Phase I is as follows. We process the vertices of $V_0^2$ in any order. For
each $v \in V_0^2$, we pick an element $\mathcal{R} = \{U_1, U_2, \ldots, U_k\}\in \mathbf{R}$ such that
\begin{align}\label{eq:vUi}
    v \sim U_i \ \text{for at least}\ k-1\ \text{clusters}\ U_i \in \{U_1, \ldots, U_k\}.
\end{align}

Then we construct a copy of $H$ containing $v$ and vertices from $\mathcal{R}$ as follows.
\begin{itemize}
    \item If $v \sim U_i$ for all $i\in[1, k-1]$, then we place $v$ in the small color-class of $H$. Thus we remove a copy of $H$ consisting of $v$, $\sigma-1$ vertices from $U_k$, and $\omega$ vertices from each $U_i$ for $i \in[1, k-1]$.
    \item Otherwise, there exists some $j \neq k$ such that $v \sim U_i$ for all $i \neq j$. Then we place $v$ in the color-class corresponding to $U_j$. Thus we remove $\omega-1$ vertices from $U_j$, $\sigma$ vertices from $U_k$, and $\omega$ vertices from each $U_i$ with $i\notin \{j,k\}$.
\end{itemize}

In both cases, Lemma~\ref{lem:komlos1995} guarantees the required copy of $H$, since $v$ has many neighbors in each cluster used for the other color-classes. To prevent a cluster from losing too many vertices, we stop considering an element of $\mathbf{R}$ if it has been selected $\theta_1 L_1$ times, where $\theta_1=\sqrt{\theta}$. The following proposition guarantees that an available element satisfying~\eqref{eq:vUi} always exists.

\begin{prop}\label{prop:propotion}
 For any vertex $v \in V_0^2$, regardless of the processing order, at least $\alpha/2$ proportion of the elements in $\mathbf{R}$ satisfy~\eqref{eq:vUi}.
\end{prop}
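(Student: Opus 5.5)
We argue by counting degrees of $v$ into the factor $\mathbf{R}$. Let $N=|\mathbf{R}|$. Each element of $\mathbf{R}$ has $(k-1)L_1+\alpha' L_1=(k-1+\alpha')L_1$ vertices, so $|V(G'')|=N(k-1+\alpha')L_1\ge(1-\theta)n$. Since $v\in V_0^2$, we have $d_G(v)\ge(1-\frac{1}{k-1}+\gamma)n$. Hence
\[
d_G(v,V(G''))\ge\left(1-\tfrac{1}{k-1}+\gamma-\theta\right)n .
\]
For each $\mathcal{R}=\{U_1,\ldots,U_k\}\in\mathbf{R}$ we bound $d_G(v,V(\mathcal{R}))$ from above according to how many clusters satisfy $v\sim U_i$. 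A cluster with $v\not\sim U_i$ contributes less than $d|U_i|\le dL_1$. So if $v\sim U_i$ for at most $k-2$ clusters, then
\[
d_G(v,V(\mathcal{R}))\le (k-2)L_1+2dL_1 .
\]
This uses the fact that the largest possible contribution comes from $k-2$ full clusters of size $L_1$. If $\mathcal{R}$ satisfies~\eqref{eq:vUi}, we use the trivial bound $(k-1+\alpha')L_1$.

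Let $x$ be the number of elements of $\mathbf{R}$ satisfying~\eqref{eq:vUi}. Then
\[
\left(1-\tfrac{1}{k-1}+\gamma-\theta\right)n\le x(k-1+\alpha')L_1+(N-x)(k-2+2d)L_1 .
\]
Dividing by $NL_1\approx n/(k-1+\alpha')$ gives
\[
\frac{x}{N}(1+\alpha')\ge (k-1+\alpha')\left(1-\tfrac{1}{k-1}+\gamma\right)-(k-2)-O(d+\theta).
\]
Write $c=k-1+\alpha$. Then $1-\frac{1}{k-1}+\gamma=\frac{k-2}{k-1}+\frac{\alpha}{(k-1)c}=\frac{c-1}{c}$. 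Therefore $(k-1+\alpha')\frac{c-1}{c}-(k-2)=\alpha+\rho\frac{c-1}{c}\ge\alpha$. It follows that
\[
\frac{x}{N}\ge\frac{\alpha-O(d+\theta)}{1+\alpha'}\ge\frac{\alpha}{2}\cdot\frac{2}{1+\alpha'}-O(d+\theta).
\]
Since $\alpha'<1$, the factor $\frac{2}{1+\alpha'}>1$ exceeds $1$ by a constant, and this margin absorbs the error terms because $d,\theta\ll\rho,\alpha$. In the balanced setting the bound still gives about $\alpha/2$ up to errors; this is handled by the slack since $\theta,d$ are small.

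The statement requires the bound regardless of processing order, that is, after some elements of $\mathbf{R}$ have been saturated. I would count only against the original sizes and the original degrees. Removed copies of $H$ only delete at most $\theta_1L_1$ vertices per cluster, so the sizes change by a negligible amount. The claim concerns the proportion of $\mathbf{R}$ satisfying~\eqref{eq:vUi}, and the relation $v\sim U_i$ is defined with respect to the original clusters, so removals affect nothing. Availability is a separate matter: the number of saturated elements is at most $|V_0|/(\theta_1L_1)\le\theta n/(\sqrt{\theta}L_1)=O(\sqrt{\theta})N$, which is tiny compared with $\alpha N/2$.

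The main obstacle is the arithmetic step: checking that the Ore-type degree of $v\in V_0^2$ yields exactly a surplus of $\alpha$ over $k-2$ "full" clusters. The threshold $\gamma$ is designed precisely for this, and one must verify that the $O(d+\theta)$ losses stay below the gap between $\frac{\alpha}{1+\alpha'}$ and $\frac{\alpha}{2}$.
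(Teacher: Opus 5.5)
Your proof is correct and is essentially the paper's argument. You lower-bound $v$'s degree into the clusters using $v\in V_0^2$ and $|V_0|\le\theta n$, bound each element of $\mathbf{R}$ failing~\eqref{eq:vUi} by $(k-2+O(d))L_1$ and every other element by its full size, and solve for the fraction using $1-\frac{1}{k-1}+\gamma=\frac{c-1}{c}$. The only difference is that you count against the original $G''$, where $\sim$ is a static property so order-independence is automatic; the paper instead counts in the shrunken graph $G^*$ and subtracts $t(h-1)$, which changes nothing essential.

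One minor caveat concerns your aside about the balanced setting, which is inaccurate. For $\alpha=\alpha'=1$ one has $\frac{\alpha}{1+\alpha'}=\frac{\alpha}{2}$ exactly, so there is no slack to absorb the $O(d+\theta)$ loss. This does not affect your proof, because the proposition is only needed when $\sigma<\omega$, and the paper treats $\sigma=\omega$ separately with a $1/3$ bound.
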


\begin{proof}
Let $v\in V_0^2$ be the $(t+1)$-st vertex to be processed, where $t$ denotes the number of vertices of $V_0^2$ that have already been processed. For these first $t$ vertices, we have removed $t$ pairwise disjoint copies of $H$, each containing exactly one processed vertex of $V_0^2$ and $h-1$ vertices from elements of $\mathbf{R}$.
Let $G^*$ be the subgraph induced by the vertices that remain after these $t$ copies of $H$ have been removed. Since $v\in V_0^2$, we have $d_G(v)\geq \left(1-\frac{1}{k-1}+\gamma\right)n$ and
\begin{align}\label{eq:propotion-lower}
d_G(v,V(G^*))\geq\left(1-\frac{1}{k-1}+\gamma\right)n-\theta n-t(h-1).
\end{align}

Let $m$ be the fraction of elements in $\mathbf{R}$ satisfying~\eqref{eq:vUi}. We derive a lower bound on $m$. Recall that every element $\mathcal{R}\in \mathbf{R}$ has size $(k-1+\alpha')L_1$. If $\mathcal{R}$ does not satisfy~\eqref{eq:vUi}, then at least two clusters $U$ fail $v\sim U$, and hence each such cluster contributes fewer than $d|U|$ neighbors of $v$. Thus such a bad element contributes at most $(k-2)L_1+dL_1+d\alpha'L_1
=(k-2+d+\alpha'd)L_1$ neighbors of $v$. We have the upper bound on  $d_G(v, V(G^*))$.
\begin{align}\label{eq:propotion-upper}
d_G(v, V(G^*))\le \left( (1 - m) \frac{k-2 + \alpha' d + d}{(k-1) + \alpha'} + m \right) \bigl(n - \theta n - (h-1)t\bigr).
\end{align}
Combining~\eqref{eq:propotion-lower} and~\eqref{eq:propotion-upper}, and using $\theta n+t(h-1)\leq \theta hn$, we obtain
\begin{align*}
1 - \frac{1}{k-1} + \gamma - \theta h \le 1 - \frac{1 + \alpha' - \alpha' d - d}{(k-1) + \alpha'} + \frac{1 + \alpha' - \alpha' d - d}{(k-1) + \alpha'} m,
\end{align*}
which implies
\[\frac{1 + \alpha' - \alpha' d - d}{(k-1) + \alpha'} - \frac{1}{k-1 + \alpha} - \theta h \le \frac{1 + \alpha' - \alpha' d - d}{(k-1) + \alpha'} m.
\]
Therefore,
\[ m > \frac{\alpha-2d-kh\theta}{1+\alpha'-d-\alpha'd}> \frac{\alpha}{2}\]
holds since we may assume that $2d+kh\theta<\frac{\alpha}{2}$.
\end{proof}

Since no element of $\mathbf{R}$ is selected more than $\theta_1L_1$ times, the proportion of unavailable elements is at most
\[ \frac{\dfrac{\theta n}{\sqrt{\theta} L_1}}{\dfrac{n}{(k-1+\alpha')L_1}} < \sqrt{\theta} k. \]
By choosing $\theta$ sufficiently small, we have $k\sqrt{\theta}<\frac{\alpha}{2}$.
Hence Proposition~\ref{prop:propotion} ensures that Phase I can be carried out.

After all vertices of $V_0^2$ have been processed, and after tiling $G[V_0^1]$, at most $h-1$ original exceptional vertices remain uncovered. We now make the regular pairs super-regular. Let $\mathcal{R}=\{U_1, \ldots, U_k\}\in\mathbf{R}$. For any vertex $v \in U_i$, if there exists $j \neq i$ with $d_G(v, U_j) < (d - \varepsilon)|U_j|$, then we move $v$ to $V_0$. By $\varepsilon$-regularity, each cluster loses at most $(k-1)\varepsilon |U_i|$ vertices in this way. Since $\varepsilon\ll\theta$, these newly exceptional vertices can be handled by the same procedure as above, after again separating them into low-degree and high-degree vertices. This does not affect the preceding estimates. After this cleaning step, every pair inside each remaining element of $\mathbf{R}$ is $(\varepsilon,d/2)$-super-regular.

\medskip
\noindent\textbf{Phase II: Reducing the number of uncovered vertices.}

Let $\mathbf{R}'$ denote the resulting clique-cover at the end of Phase I. Since some vertices have been removed from the clusters during Phase I, the sizes of resulting clusters vary.
Recall that $\mathcal{R}^{\alpha'}_k(L_1) \supseteq \mathcal{R}^\alpha_k(L_2) \cup \mathcal{R}_k^1(L_0)$, where $L_2 = \frac{1-\alpha'}{1-\alpha} L_1$ and $L_0 = \frac{\alpha'-\alpha}{1-\alpha} L_1$. Since $\alpha'-\alpha=\rho \gg \theta_1$, the number of vertices removed from each cluster in Phase I is much smaller than $L_0$. Hence each element $\mathcal{R}' \in \mathbf{R}'$ contains at least one balanced $\varepsilon$-regular $k$-clique $\mathcal{R}_k^1(L_0/2)$. Thus, $\mathcal{R}'$ contains an $H$-tiling, where the $\sigma$-vertex classes can be taken from any cluster of element $\mathcal{R}'$.

This flexibility allows us to transfer unused vertices among the clusters of a fixed element $\mathcal{R}'$. Indeed, suppose we want to move $\omega-\sigma$ unused vertices from a cluster $U_1$ to another cluster $U_2$ in $\mathcal{R}'$. We simply switch the color-classes for one
copy of $H$ such that the $\sigma$-vertex class which was supposed to come from $U_1$ will come
from $U_2$. This results in a loss of $\omega - \sigma$ vertices from $U_1$ while  $U_2$ will gain $\omega - \sigma$ vertices. This observation will help us establish the following result.

\begin{prop}\label{prop:R'}
Each element of $\mathbf{R}'$ has an $H$-tiling that covers all but at most $(k-1)(2\omega - \sigma) +\omega$ vertices.
\end{prop}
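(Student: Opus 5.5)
Fix an element $\mathcal{R}'=\{U_1,\ldots,U_k\}\in\mathbf{R}'$ with current cluster sizes $u_1,\ldots,u_k$. After the cleaning step each pair is $(\varepsilon,d/2)$-super-regular. Since Phase I removed at most $\theta_1L_1\ll\rho L_1$ vertices from each cluster, we have $L_1-\theta_1L_1\le u_i\le L_1$ for $i<k$ and $\alpha'L_1-\theta_1L_1\le u_k\le\alpha'L_1$. The plan is to choose, for each $j\in[1,k]$, a number $x_j$ of copies of $H$ whose $\sigma$-class lies in $U_j$. This uses
\[
\sigma x_i+\omega\sum_{j\neq i}x_j=\omega X-(\omega-\sigma)x_i
\]
vertices of $U_i$, where $X=\sum_j x_j$. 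We need this to be at most $u_i$ for every $i$, with small slack. The resulting complete $k$-partite arrangement $\bigcup_j x_j$ copies of $H$ embeds into the complete blow-up, so Lemma~\ref{lemma:blowup} (applied to suitable subsets of the clusters, possibly after deleting the few leftover vertices, which keeps super-regularity since the deleted sets are tiny) yields the $H$-tiling of $\mathcal{R}'$.

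To choose the $x_j$, I would solve the real system $\omega X-(\omega-\sigma)x_i=u_i$ for all $i$. Summing gives $(k\omega-\omega+\sigma)X=\sum u_i$, hence $x_i=(\omega X-u_i)/(\omega-\sigma)$. The key check is nonnegativity: $x_i\ge0$ iff $u_i\le\omega X$. Since $X\approx(k-1+\alpha')L_1/((k-1)\omega+\sigma)$ and $\alpha'>\alpha$, we get $\omega X$ strictly larger than $L_1\ge u_i$ by order $\rho L_1$. Thus the real solution is strictly positive, and this is exactly where the balanced piece $\mathcal{R}^1_k(L_0/2)$ noted above is used. The same computation shows $x_i\approx\omega X-u_i$ is of order $L_1$, not tiny.

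Next I would round. Replace each $x_i$ by $\lfloor x_i\rfloor$. Because the coefficients of $x_i$ in the usage of $U_i$ are $\sigma$ and $\omega$, rounding down each $x_j$ frees at most $\omega$ per $j$ in each cluster. So the integer solution leaves, in each cluster, a nonnegative leftover of at most about $(k-1)\omega+\sigma$. This is slightly too crude, so I would combine it with the shifting observation. While two clusters $U_a,U_b$ have leftovers differing by more than $\omega-\sigma$, I would move one $\sigma$-class from $U_b$ to $U_a$ (decrease $x_b$, increase $x_a$), which shifts $\omega-\sigma$ between their leftovers. When this terminates, all leftovers lie in a window of width $\omega-\sigma$.

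Finally, if every leftover is at least $\omega$ (or at least $\sigma$ in the cluster that would host the neck), one more copy of $H$ fits. Repeating this until some leftover drops below $\omega$ and then rebalancing, the terminal state has all leftovers in a window of width $\omega-\sigma$ whose minimum is below $\omega$. So each leftover is below $2\omega-\sigma$. One cluster can be handled more sharply by choosing it to take the neck, giving at most $\omega$ there, for a total of at most $(k-1)(2\omega-\sigma)+\omega$. The main obstacle is making this termination and bookkeeping precise: I must ensure the swaps never make some $x_j$ negative, which holds because every $x_j$ is of order $L_1$ while only $O(k)$ swaps are needed. I also need Lemma~\ref{lemma:blowup} to apply with the very uneven but linear part sizes, which holds since all clusters have size $\Theta(L_1)$ and super-regularity survives removal of $O(1)$ vertices.
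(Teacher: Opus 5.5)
Your argument is correct and uses the same two exchange moves as the paper. The first is adding one more copy of $H$ when every cluster has at least $\omega$ leftover vertices; the second is moving a neck from one cluster to another, which shifts $\omega-\sigma$ leftover vertices. Together these force the minimum leftover below $\omega$ and every other leftover below $2\omega-\sigma$. The paper phrases this as choosing a best, then most balanced, tiling and arguing by contradiction. You instead reach the same terminal configuration constructively from the fractional solution; termination follows because $\sum_i a_i^2$ strictly decreases under both moves. Your route also makes explicit that every $x_j$ is at least of order $\rho L_1$, so the required neck swaps are always available and a single application of Lemma~\ref{lemma:blowup} realizes the tiling.
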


\begin{proof}
Consider an element $\mathcal{R}'=\{U_1,\ldots,U_k\}\in \mathbf{R}'$. Since all cluster-pairs in $\mathcal{R}'$ are super-regular, we can apply the Blow-up Lemma (Lemma~\ref{lemma:blowup}) to $\mathcal{R}'$. We pick an $H$-tiling of $\mathcal{R}'$ (which leaves $a_1, a_2, \ldots, a_k$ vertices uncovered from clusters $U_1, U_2, \ldots, U_k$, respectively), satisfying the following two conditions:
\begin{itemize}
    \item It is one of the best $H$-tilings, i.e., $\sum_{i=1}^{k} a_i$ is the minimum over all $H$-tilings of $\mathcal{R}'$.
    \item It is the most balanced, i.e., $\sum_{i=1}^{k} |a_i -\omega|$ is the smallest among the best tilings.
\end{itemize}

Assume that $a_{i_0} = \min\{a_i:i\in[1,k]\}$. We claim that $a_{i_0} < \omega$. Indeed, if $a_i\geq \omega$ for all $i\in[1,k]$, then, since $\sigma\leq \omega$, the uncovered vertices in the $k$ clusters would contain enough vertices to form one additional copy of $H$. This contradicts the maximality of the chosen tiling. Moreover, we claim that for all $i \neq i_0$, we have $a_i < 2\omega- \sigma$. Suppose otherwise that  $a_i\ge 2\omega- \sigma$  for some  $i\neq i_0$, then we can move $(\omega- \sigma)$ unused vertices from $U_i$ to $U_{i_0}$. This preserves $\sum_{i=1}^{k} a_i$, but decreases $\sum_{i=1}^{k} |a_i - \omega|$. A contradiction to the choice of the tiling. Therefore,
\begin{equation*}
\sum_{i=1}^{k} a_i = a_{i_0} + \sum_{i \neq i_0} a_i < \omega + (k - 1)(2\omega - \sigma).
\end{equation*}
Thus $\mathbf{R}'$ has an $H$-tiling that covers all but at most $(k-1)(2\omega - \sigma) +\omega$ vertices.
\end{proof}

If we apply Proposition~\ref{prop:R'} directly to all elements of $\mathbf{R}'$, we obtain an $H$-tiling of $G''$ that leaves at most $\bigl((k-1)(2\omega - \sigma) + \omega\bigr)|\mathbf{R}'|$ vertices uncovered. Recall that $|\mathbf{R}'| = O(\ell)$ and $\ell\le M(\varepsilon)$ (from Lemma~\ref{lemma:Regularity}). However, we need a bound depending only on $H$, and hence independent of $\varepsilon$. To achieve this, we use the connections between different elements of $\mathbf{R}'$.

Define a directed graph $\mathcal{D}$ as follows. The vertex set $V(\mathcal{D})$ consists of the clusters of $G''$. A directed edge $(U,U_1)\in E(\mathcal{D})$ if and only if $U_1\in\mathcal{R}' = \{U_1, U_2, \ldots, U_k\}\in \mathbf{R}$ and $(U, U_i) \in E(R)$ for all $i \neq 1$. Thus, a directed edge $(U,U_1)$ means that one vertex from $U$ can be used together with vertices from the other $k-1$ clusters of $\mathcal{R}$ to form a copy of $H$. In this way, an extra vertex in $U$ can be moved to $U_1$.

Since Phase I removes only a small number of vertices from each cluster, the adjacency in the reduced graph $R$, and hence the directed graph $\mathcal{D}$, is unchanged.

In a directed graph $D$, the \emph{source set} of a vertex $v$ is defined as $\mathcal{W}(v) = \{u \in V(D) : \text{there exists a directed path from } u \text{ to } v\}$. It is easy to see that $|\mathcal{W}(u)| \ge |\mathcal{W}(v)|$ if $u \in N^+(v)$. A set $\mathcal{S}(D) \subseteq V(D)$ is called a \emph{sink set} if $V(D) = \bigcup_{v \in \mathcal{S}(D)} \mathcal{W}(v)$. We will show that $\mathcal{D}$ has a sink set $\mathcal{S}(\mathcal{D})$ of size at most $\dfrac{5k^2}{(k-1)^2\gamma} \omega$.

We now describe the transfer procedure.
\begin{enumerate}[{\bf Step 1}]
\item For each element $\mathcal{R}'\in\mathbf{R}'$, choose a largest $H$-tiling as in Proposition~\ref{prop:R'}, but do not yet remove the corresponding copies. For each cluster $C$, let $\operatorname{extra}(C)$ denote the number of vertices of $C$ left uncovered by this tiling.
\item Let $\mathcal{S}(\mathcal{D})$ be a sink set. For every cluster $C\notin \mathcal{S}(\mathcal{D})$ with
    $\operatorname{extra}(C)>0$, choose a directed path $C=C^0,C^1,\ldots,C^t$ in $\mathcal{D}$, where $C^t\in\mathcal{S}(\mathcal{D})$.

    We transfer the extra vertices of $C$ along this path. For each $i\in[1,t]$, let $\mathcal{R}_i\in\mathbf{R}'$ be the element containing $C^i$. Since $(C^{i-1},C^i)\in E(\mathcal{D})$, the cluster $C^{i-1}$ is adjacent in $R$ to every cluster of $\mathcal{R}_i$ except possibly $C^i$. Therefore, using the Blow-up Lemma, we can remove copies of $H$, each containing one vertex from $C^{i-1}$ and the remaining $h-1$ vertices from the other clusters of $\mathcal{R}_i$.

    This operation decreases $\operatorname{extra}(C^{i-1})$ and increases $\operatorname{extra}(C^i)$ by the same amount. Repeating this along the directed path moves all extra vertices originally in $C$ into the sink set.

    Since the total number of extra vertices being transferred is bounded by a constant and $L_1$ is sufficiently large, these additional removals do not affect the super-regularity of the relevant pairs.

    \item After all extra vertices have been transferred into the sink set, apply the Blow-up Lemma to the remaining vertices in each element of $\mathbf{R}'$. Combining these copies of $H$ with the copies removed in Phase I and during the transfer process gives an $H$-tiling of $G$.
\end{enumerate}

It remains to prove that $\mathcal{D}$ has a small sink set. Let \[J_L=\left\{U\in V(\mathcal{D}):d_R(U)<\left(1-\frac{1}{k-1}+\gamma-d-2\varepsilon\right)|V(R)|\right\}\] and let $J_H=V(\mathcal{D})\setminus J_L$. By the Ore-type condition~\eqref{eq:oreconditionR}, the set $J_L$ induces a clique in $R$. Hence the extra vertices transferred to clusters in $J_L$ can be handled in this clique, leaving at most $k$ additional vertices uncovered. Thus it remains to find a small sink set for $J_H$.
We use the following elementary lemma.

\begin{lemma}\label{lem:sinkset}
Let $D$ be a directed graph with minimum out-degree at least $\delta$. Then $D$ has a sink set of size at most
$\frac{|V(D)|}{\delta+1}$.
\end{lemma}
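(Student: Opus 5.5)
We will prove the statement by a greedy construction, with induction on $|V(D)|$.

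Let $N=|V(D)|$. We build a set $\mathcal{S}$ iteratively. Maintain the set $X$ of vertices not yet covered by $\bigcup_{v\in\mathcal{S}}\mathcal{W}(v)$. While $X\neq\emptyset$, pick a vertex $v\in X$ whose source set $\mathcal{W}(v)$ is as large as possible, and add $v$ to $\mathcal{S}$. The process terminates with a sink set. It therefore suffices to show that each chosen $v$ covers at least $\delta+1$ previously uncovered vertices, i.e. $|\mathcal{W}(v)\cap X|\ge\delta+1$, since then $|\mathcal{S}|\le N/(\delta+1)$.

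The key observation is that uncovered vertices are closed under taking out-neighbours in the reverse sense. If $u\in X$ and $w\in N^+(u)$, then $u\in\mathcal{W}(w)$. So if $w$ were covered, say $w\in\mathcal{W}(x)$ with $x\in\mathcal{S}$, then $u\in\mathcal{W}(x)$ as well, which is a contradiction. Hence all out-neighbours of a vertex of $X$ lie in $X$, and $D[X]$ has minimum out-degree at least $\delta$. Moreover, $\mathcal{W}$ computed inside $D[X]$ agrees with $\mathcal{W}$ in $D$ intersected with $X$. The reason is that a directed path ending in $X$ has all of its vertices in $X$: if some vertex on the path were covered, every later vertex on the path would be covered too.

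Now consider $D[X]$. Every vertex of $D[X]$ lies in $\mathcal{W}(v)$ for itself. I use the fact, which I regard as the only real content, that in a digraph of minimum out-degree $\delta$ there is a vertex $v$ with $|\mathcal{W}(v)|\ge\delta+1$. To see this, start at any vertex and follow out-edges to get a walk. Among the vertices of $X$, take $v$ maximizing $|\mathcal{W}(v)|$. The remark in the paper that $|\mathcal{W}(u)|\ge|\mathcal{W}(v)|$ for $u\in N^+(v)$ shows that $\mathcal{W}(v)\subseteq\mathcal{W}(u)$. By maximality, $\mathcal{W}(u)=\mathcal{W}(v)$ for every out-neighbour $u$, and iterating this gives $\mathcal{W}(u)=\mathcal{W}(v)$ for every $u$ reachable from $v$. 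In particular each such $u$ lies in $\mathcal{W}(v)$, so the set of vertices reachable from $v$ is contained in $\mathcal{W}(v)$. That set contains $v$ together with its at least $\delta$ out-neighbours, so $|\mathcal{W}(v)|\ge\delta+1$.

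Applying this at each iteration within $D[X]$ (choosing $v$ maximizing $|\mathcal{W}(v)\cap X|$) guarantees at least $\delta+1$ new vertices are covered, which gives the bound. The only delicate point is the closure argument showing that $D[X]$ retains minimum out-degree $\delta$ and that source sets restrict correctly. I would verify this carefully, as done above, but I do not expect any genuine obstacle.
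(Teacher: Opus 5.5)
Your proof is correct and is essentially the paper's argument: choose a vertex with a largest source set, use maximality to show its out-neighbours lie in that source set (so at least $\delta+1$ new vertices are covered), and use the fact that no uncovered vertex has a covered out-neighbour to keep minimum out-degree at least $\delta$ in the remaining digraph $D[X]$. One small slip: your stated reason that source sets restrict correctly to $D[X]$ runs the wrong way, since coverage propagates to predecessors rather than successors, so a path ending in $X$ may start outside $X$. The correct reason is that a path \emph{starting} in $X$ stays in $X$, by the out-neighbour closure you already proved; in fact the restriction claim is not needed at all if you maximize $|\mathcal{W}(v)\cap X|$ over $v\in X$ directly.
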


\begin{proof}Let $x_1\in V(D)$ such that  $|\mathcal{W}(x_1)| = \max_{v \in V(D)} |\mathcal{W}(v)|$. We claim that
$N_D^+(x_1)\subseteq \mathcal{W}(x_1)$. For any $u\in N_D^+(x_1)$, any directed path to $x_1$ can be extended to $u$ via the edge $(x_1, u)$, which implies $\mathcal{W}(x_1) \subseteq \mathcal{W}(u)$.
By the maximality of $|\mathcal{W}(x_1)|$, we have $\mathcal{W}(x_1) = \mathcal{W}(u)$, and therefore $u\in\mathcal{W}(x_1)$. It follows that $|\mathcal{W}(x_1)|\geq d_D^+(x_1)+1\geq \delta+1$.

Delete $\mathcal{W}(x_1)$ and repeat the same argument in the remaining directed graph. If a vertex outside $\mathcal{W}(x_1)$  had an out-neighbour in $\mathcal{W}(x_1)$, then it could reach $x_1$, a contradiction. Thus the minimum out-degree in the remaining digraph is still at least $\delta$. Hence at least $\delta+1$ vertices are deleted at each step. The selected vertices form a sink set $S(D)$, and 
$|S(D)|\le\frac{|V(D)|}{\delta+1}$.
\end{proof}

\begin{prop}\label{prop:degreeU}
For any $U \in V(\mathcal{D})$ with $d_{R}(U)\ge(1-\frac{1}{k-1}+\gamma-d-2\varepsilon)|V(R)|$, we have
 $d_{\mathcal{D}}^{+}(U) \geq \frac{(k-1)^2 \gamma}{k} |V(\mathcal{D})|$.
\end{prop}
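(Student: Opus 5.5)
The plan is to count out-neighbours of $U$ element by element in the factor $\mathbf{R}'$ and to set this up as a small linear optimisation. First I will convert the degree hypothesis on $U$ in $R$ into a statement about vertex weight. Each cluster of $R$ is refined into subclusters that lie in the elements of $\mathbf{R}'$, and by Lemma~\ref{lem:SL} an edge of $R$ between two clusters persists between all of their subclusters. Hence the hypothesis $d_R(U)\ge(1-\frac{1}{k-1}+\gamma-d-2\varepsilon)|V(R)|$ says that the clusters of $\mathcal{D}$ adjacent to $U$ cover at least a $(1-\frac{1}{k-1+\alpha}-d-2\varepsilon)$-fraction of the vertices of $G''$, up to an $O(\theta_1)$ loss from Phase I. Here I use $1-\frac{1}{k-1}+\gamma=\frac{k-2+\alpha}{k-1+\alpha}$.

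Let $N=|\mathbf{R}'|$, so that $|V(\mathcal{D})|=kN$. Every element $\mathcal{R}'=\{U_1,\ldots,U_k\}$ has $k-1$ clusters of size about $L_1$ and one cluster $U_k$ of size about $\alpha' L_1$. For each element I record two quantities: its \emph{weight} $w$, the total size (in units of $L_1$) of its clusters adjacent to $U$, and its \emph{contribution} $o$, the number of its clusters that are out-neighbours of $U$ in $\mathcal{D}$. By the definition of $\mathcal{D}$, a cluster $U_j$ is an out-neighbour of $U$ exactly when $U$ is adjacent to all $U_i$ with $i\ne j$. This gives four types of element:
\begin{itemize}
\item $U$ adjacent to all $k$ clusters: $w=k-1+\alpha'$ and $o=k$;
\item $U$ missing only a large cluster: $w=k-2+\alpha'$ and $o=1$;
\item $U$ missing only $U_k$: $w=k-1$ and $o=1$;
\item $U$ missing at least two clusters: $w\le k-2$ and $o=0$.
\end{itemize}
The adjacency hypothesis says that the average of $w$ over all elements is at least $\frac{k-2+\alpha}{k-1+\alpha}(k-1+\alpha')-\eta$, where $\eta=O(d+\varepsilon+\theta_1)$. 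This lower bound is at least $k-2+\alpha-\eta'$ with $\eta'=O(d+\varepsilon+\theta_1+\rho)$, using $\alpha'-\alpha=\rho$.

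Next I minimise the average of $o$ subject to this constraint. Measure the excess $w-(k-2)$, which is positive only for the first three types. The cost in $o$ per unit of excess is $k/(1+\alpha')$ for the first type, $1/\alpha'$ for the second and $1$ for the third. Since $k\ge 2$ and $\alpha'<1$, the smallest of these ratios is $1$. Every element therefore satisfies $o\ge w-(k-2)$, because the inequality also holds trivially for the fourth type. Averaging gives $\frac{1}{N}\sum o\ge \alpha-\eta'$, and so
\[
d^+_{\mathcal{D}}(U)\ge(\alpha-\eta')N=\frac{\alpha-\eta'}{k}|V(\mathcal{D})|.
\]

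Finally I compare this with the target. We have
\[
\frac{(k-1)^2\gamma}{k}=\frac{(k-1)\alpha}{k(k-1+\alpha)}=\frac{\alpha}{k}-\frac{\alpha^2}{k(k-1+\alpha)}.
\]
The slack $\frac{\alpha^2}{k(k-1+\alpha)}$ depends only on $H$, while $\eta'$ can be made arbitrarily small by the hierarchy~\eqref{eq:parameter} together with $\rho,\theta\ll\alpha$. Hence $\alpha-\eta'\ge\frac{(k-1)^2\gamma}{k}\cdot k$, which is the claimed bound.

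The main obstacle is the bookkeeping in the first step. I must make sure that cluster sizes are uniform within each element up to negligible error, that adjacency in $R$ is inherited by all subclusters, and that the Phase I removals, of order $\theta_1 L_1$ per cluster, do not change the weights beyond $\eta$. The inequality $o\ge w-(k-2)$ itself is elementary once these points are in place.
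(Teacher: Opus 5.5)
Your proposal is correct and is essentially the paper's argument. The paper likewise classifies the elements of $\mathbf{R}$ according to which of their clusters are adjacent to $U$: its $m_3$, $m_2$, $m_1$ are the fractions of your first, second and third types. It derives the same weighted degree inequality and then uses $m_1+\alpha' m_2+(1+\alpha')m_3\le m_1+m_2+km_3$, which is exactly your per-element bound $o\ge w-(k-2)$.

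Your version is slightly more careful. You absorb the $d+2\varepsilon$ error explicitly into the constant slack $\frac{\alpha^2}{k(k-1+\alpha)}$, whereas the paper drops that term silently. The $\theta_1$ and $\rho$ losses you budget for are not actually needed, since the out-degree depends only on adjacencies in $R$, and $\alpha'>\alpha$ works in your favour.
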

\begin{proof}
Since the connections in $R$ are not influenced by the insertion of exceptional vertices in Phase I, we may assume that the sizes of clusters in each clique $\mathcal{R}\in \mathbf{R}$ are still $L_1$ and $\alpha' L_1$.

For a given $U \in V(\mathcal{D})$, let $m_1$, $m_2$, and $m_3$ denote the fraction of cliques $\mathcal{R}=\{U_1, U_2, \ldots, U_k\} \in \mathbf{R}$ for which $(U, U_i) \in E(R)$ for all $i\in [1,k-1]$ but $(U, U_k) \notin E(R)$, $(U, U_i) \in E(R)$ but $(U, U_j) \notin E(R)$ for all $i \neq j$ with $j \neq k$, and $(U, U_i) \in E(R)$ for all $i\in [1,k]$, respectively. By the degree condition of $U$ in the assumption, we have
\begin{align*}
1 - \frac{1}{k-1} + \gamma - d -2\varepsilon &\leq \frac{k-2}{k-1+\alpha'} + \frac{1}{k-1+\alpha'} m_1  + \frac{\alpha'}{k-1+\alpha'} m_2 + \frac{1+\alpha'}{k-1+\alpha'} m_3 \nonumber\\
&< \frac{k-2}{k-1+\alpha'} + \frac{1}{k-1+\alpha'} m_1 + \frac{1}{k-1+\alpha'} m_2 + \frac{2}{k-1+\alpha'} m_3 \nonumber \\
 &< \frac{k-2}{k-1+\alpha} + \frac{1}{k-1+\alpha} (m_1 + m_2 + 2m_3)\quad (\text{as}\ \alpha<\alpha'<1)\nonumber\\
&= \left(1 - \frac{1}{k-1} - (k-2)\gamma\right) + \left(\frac{1}{k-1} - \gamma\right)(m_1 + m_2 + 2m_3),
\end{align*}
that is,
\[
(k-1)\gamma\leq \left(\frac{1}{k-1} - \gamma\right)(m_1 + m_2 + 2m_3).
\]
By the definition of $\mathcal{D}$, $U$ has exactly one out-neighbor into $\mathcal{R}$ for the cases corresponding to $m_1$ and $m_2$, and $k$ out-neighbors for the case corresponding to $m_3$. Since $k \ge 2$, we have $m_1 + m_2 + k m_3 \ge m_1 + m_2 + 2m_3$. Note that $|\mathbf{R}| = \frac{|V(\mathcal{D})|}{k}$, we have
\begin{equation*}
d_{\mathcal{D}}^{+}(U) = (m_1 + m_2 + k m_3)|\mathbf{R}|
\geq \frac{(k-1)\gamma}{\frac{1}{k-1} - \gamma} \frac{|V(\mathcal{D})|}{k}
> \frac{(k-1)^2 \gamma}{k} |V(\mathcal{D})|.\qedhere
\end{equation*}
\end{proof}

Let $\mathcal{W}(J_L)=\bigcup_{U\in J_L}\mathcal{W}(U)$. Every cluster in $\mathcal{W}(J_L)$ can reach a low-degree cluster, and hence its extra vertices can be transferred into $J_L$. As noted above, the extra vertices in $J_L$ can be handled with at most $k$ additional uncovered vertices.
Now consider the induced subdigraph $\mathcal{D}_H=\mathcal{D}-\mathcal{W}(J_L)$. 
If $\mathcal{D}_H$ is non-empty, then every vertex of $\mathcal{D}_H$ belongs to $J_H$. Moreover, no vertex of $\mathcal{D}_H$ has an out-neighbour outside $\mathcal{D}_H$; otherwise it could reach $J_L$. Therefore Proposition~\ref{prop:degreeU} gives $\delta^+(\mathcal{D}_H)\geq\frac{(k-1)^2\gamma}{k}|V(\mathcal{D})|$.
By Lemma~\ref{lem:sinkset}, $\mathcal{D}_H$ has a sink set $S(\mathcal{D})$ with \[|S(\mathcal{D})|\le\frac{|V(\mathcal{D})|}{\frac{(k-1)^2\gamma}{k}|V(\mathcal{D})|+1}\leq\frac{k}{(k-1)^2\gamma}.\] 
Thus, after all extra vertices have been transferred, the only uncovered vertices lie in at most
$\frac{k}{(k-1)^2\gamma}$ high-degree sink clusters, together with at most $k$ additional vertices arising from the low-degree clique structure. By Proposition~\ref{prop:R'}, the total number of uncovered vertices is therefore at most \[((k-1)(2\omega-\sigma)+\omega)\left(\frac{k}{(k-1)^2\gamma}\right)+k+h\le \frac{5k^2}{(k-1)^2\gamma}\omega+h\]
since $\sigma\leq\omega$.
This completes the proof of Lemma~\ref{lem:generalcase}.
\end{proof}

\section{The extremal subcase when $s<\mu$}\label{sec:excase}

By Lemma~\ref{lem:generalcase}, the general subcase where $s \ge \mu$ was resolved in Section~\ref{sec:general}. In this section, we assume that $s < \mu$. Recall that $\Phi = \{\Phi_k, \ldots, \Phi_1\}$ is a maximal clique-cover of $R$. Substituting~\eqref{eq:varphik} into~\eqref{eq:sum1}, we have
\begin{align*}
(k-1)\varphi_k + (k-1)\gamma + \sum_{i=2}^{k-1} \varphi_{k-i}(i-1) + s + (k-1)\varphi_{k-1} + \sum_{i=2}^{k-1} \varphi_{k-i}(k-i) = 1,
\end{align*}
which implies
\begin{align}\label{eq:sum}
\sum_{i=1}^{k}\varphi_i= \frac{1}{k-1} - \gamma - \frac{s}{k-1} \geq \frac{1}{k-1} - \gamma - \frac{\mu}{k-1}.
\end{align}
Also from~\eqref{eq:sum1}, we get
\begin{align*}
(k-1)k\gamma + ks + (k-1)\sum_{j \geq 1} j\varphi_{k-j} = 1.
\end{align*}
For $i < k$, define $\sigma_i = \sum_{j \leq i} (k-j)\varphi_j$. We have
\[
\sigma_{k-1} = \frac{1}{k-1} - k\gamma - \frac{k}{k-1}s = \frac{1-\alpha}{k-1+\alpha} - \frac{k}{k-1}s \geq \alpha_0,
\]
where $\alpha_0 = \frac{1-\alpha}{k-1+\alpha} - \frac{k}{k-1}\mu$.

Let $C = \sqrt[k]{\alpha_0/\varepsilon}$. Since $\sigma_{k-1} \geq \alpha_0 = C^k \varepsilon$, there exists an integer $t\in[1, k-1]$ and an integer $j_0 \leq k$ such that $\sigma_t \geq C^{j_0}\varepsilon$ and $\sigma_{t-1} \leq C^{j_0-1}\varepsilon$. Set $\varepsilon' = k(C^{j_0-1}\varepsilon + 2\mu)$ and $\mu' = C^{j_0}\varepsilon$.

We first move all vertices of $G$ in $\Phi_j$ for $j < t$ into $V_0$. The size of the resulting exceptional set (still denoted by $|V_0|$) is bounded by
\[
|V_0|<\varepsilon n + \sum_{j < t} j\varphi_j n < \varepsilon n + k\sum_{j < t} \varphi_j n < \varepsilon n + k\sigma_{t-1}n \leq  \varepsilon n +kC^{j_0-1}\varepsilon n < \varepsilon' n.
\]

Following the proof of Proposition~\ref{prop:varphik}, it is not hard to show that the number of $k$-cliques that are over-connected to some smaller clique is at most $2t\mu\ell$. Since the size of $\Phi_t$ is not small, most $t$-cliques are not over-connected to any $k$-clique. Fix two such $t$-cliques $K_1,K_2\in \Phi_t$. For $j\in [0, k-t]$, let $\Phi_{k-j}^0 = \{K \in \Phi_{k-j} : e(\{K_1, K_2\}, K) < 2t(k-j-1)\}$ and $m_j = |\Phi_{k-j}^0|/\ell$. Using similar computations as in Proposition~\ref{prop:lambdai}, we obtain
\[
m_0 + m_1 + \ldots + m_{k-t} < \frac{2t}{k-1}s + 4dt < \frac{\varepsilon'}{k}.
\]

Recall that in Proposition~\ref{prop:connectivity}(iii), we divide each clique $K^j$ ($j \geq t$) which is well-connected to $K_1, K_2$ into sets $A_t(K^j)$ and $B_t(K^j)$, where $A_t(K^j)$ consists of the clusters which are adjacent to all but one of $K_1$ (or $K_2$), and $B_t(K^j)$ are those which are adjacent to all of $K_1$ (or $K_2$). When combining $A_t(K)$ for all clusters $K \notin \Phi_t^0 \cup \ldots \cup \Phi_k^0$, we obtain a cluster set $A$ such that
\begin{align*}|A|=t\left(\sum_{j=t}^{k}\varphi_j-\sum_{j=0}^{k-t}m_j\right)\ell
{\overset{\eqref{eq:sum}}\geq} t\left(\frac{1}{k-1} - \gamma - \frac{2k\mu}{k-1}\right)\ell.
\end{align*}
On one hand, $A$ is covered by a family of $t$-cliques. On the other hand, by Proposition~\ref{prop:connectivity}(iii), $A$ is the disjoint union of $t \leq k-1$ independent sets $U_1, \ldots, U_t$, with $|U_j| \geq\left(\frac{1}{k-1} - \gamma - \frac{2k\mu}{k-1}\right)\ell\ge \left(\frac{1}{k-1} - \gamma - \frac{\varepsilon'}{k-1}\right)\ell$.
Because each $U_i$ forms an independent set in $R$, any two vertices in $U_i$ are nonadjacent. By~\eqref{eq:oreconditionR}, the sum of their degrees must be at least $2\left(1-\frac{1}{k-1}+\gamma-d-2\varepsilon\right)\ell$. So at most one vertex in each $U_i$ can have degree less than $\left(1-\frac{1}{k-1}+\gamma-d-2\varepsilon\right)\ell$.
Thus, after moving at most one vertex from each $U_i$ into $V_0$, we may assume that every $c\in U_i$ satisfies
$d_R(c)\geq\left(1-\frac{1}{k-1}+\gamma-d-2\varepsilon\right)\ell$.
Since $U_i$ is independent, all neighbors of $c\in U_i$ lie in $R\setminus U_i$. Moreover,
$|R\setminus U_i|\leq\ell-\left(\frac{1}{k-1}-\gamma-\frac{\varepsilon'}{k-1}\right)\ell$.
Hence, for all $c\in U_i$, we have
\begin{align}\label{eq:degreec}
d_R(c,R\setminus U_i)=d_R(c)\geq\left(1-\frac{1}{k-1}+\gamma-d-2\varepsilon\right)\ell\geq|R\setminus U_i|-\frac{\varepsilon'}{k-1}\ell.
\end{align}

In other words, all but at most one element of $U_i$ are almost adjacent to all the clusters outside $U_i$.
Depending on the value of $t$, we will consider two separate cases.

\subsection{Extremal subcase (I): $t=k-1$}

Let $U_k = V(R) \setminus \bigcup_{i=1}^{k-1}U_i$, and we also write $U_i$ (for $1 \le i \le k$) for the underlying vertex sets in $G$. We first move all exceptional vertices from $V_0$ to $U_k$. Then, we move a vertex $v \in U_k$ to $U_i$ (for some $i < k$) if $d(v, U_i) < \varepsilon_1 |U_i|$, where $\varepsilon' \ll \varepsilon_1 \ll 1$. The resulting sets are still denoted by $U_1, \ldots, U_k$.

In the ideal case, $|U_i| = \left(\frac{1}{k-1} - \gamma\right)n$ for all $i < k$ and $|U_k| = (k-1)\gamma n$. Due to the superregularity between every pair in $\{U_1, \ldots, U_k\}$, we can apply Lemma~\ref{lemma:blowup} to find the desired $H$-factor in $G$.

If $|U_i| < \left(\frac{1}{k-1} - \gamma\right)n$ for all $i < k$, we can construct copies of $H$ by placing their $\omega$-classes in $U_k$. An argument analogous to Proposition~\ref{prop:R'} then shows that all but at most $2k\omega$ vertices of $G$ are covered by an $H$-tiling. Thus, we may assume $|U_1| > \left(\frac{1}{k-1} - \gamma\right)n$, while $|U_i| < \left(\frac{1}{k-1} - \gamma\right)n$ for $1 < i < k$, and $|U_k| < (k-1)\gamma n$. The other cases are symmetric.

To balance the parts, we move vertices from $U_1$ to the deficient sets. More precisely, whenever there exists a vertex $v\in U_1$ satisfying $d_G(v,U_1)>\varepsilon_1|U_1|$, we move $v$ into some set $U_i$, $i>1$, whose size is below the ideal value. This operation preserves the super-regularity of the pairs between distinct parts, since $v$ has many neighbors outside $U_1$. We continue this process until either the ideal sizes are reached, or every vertex $v\in U_1$ satisfies $d_G(v,U_1)\leq \varepsilon_1|U_1|$.
In the first case, the Blow-up Lemma completes the proof. Thus we may assume the latter condition holds.
Let $x=|U_1|-\left(\frac{1}{k-1}-\gamma\right)n$.
We partition $U_1$ into $U_H=\left\{v\in U_1:d_G(v)\geq\left(1-\frac{1}{k-1}+\gamma\right)n\right\}$ and $U_L=U_1\setminus U_H$.
For any $v\in U_H$, we have
\[d_G(v,U_1)=d_G(v)-d_G(v,V(G)\setminus U_1)\geq\left(1-\frac{1}{k-1}+\gamma\right)n
-\bigl(n-|U_1|\bigr)=x.\]
Since we are assuming $d_G(v,U_1)\leq \varepsilon_1|U_1|$ for all $v\in U_1$, it follows that $x\leq \varepsilon_1|U_1|$.
By the Ore-type condition~\eqref{eq:degree}, $U_L$ induces a clique in $G$. Since $\Delta(G[U_1])\leq \varepsilon_1|U_1|$, we have $|U_L|\leq \Delta(G[U_1])+1\leq \varepsilon_1|U_1|+1$.
Thus, for sufficiently large $n$, $|U_H|\geq (1-2\varepsilon_1)|U_1|$.
We will use the following proposition.

\begin{prop}\label{prop:nui}Let $i$ be a positive integer and let $\delta_{U_1}(U_H)=\min_{v\in U_H} d_G(v,U_1)$.
Then \[\nu_i(G[U_1])\geq\frac{(\delta_{U_1}(U_H)-i+1)(1-2\varepsilon_1)|U_1|}{2(i+1)\Delta(G[U_1])}.\]
\end{prop}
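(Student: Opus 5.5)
We prove the bound with a greedy construction plus a charging argument. Write $\Delta=\Delta(G[U_1])$ and $\delta^*=\delta_{U_1}(U_H)$. We may assume $\delta^*\ge i$, since otherwise the right-hand side is non-positive and there is nothing to prove.

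\textbf{Construction.} Take a maximal collection $\mathcal{S}$ of vertex-disjoint $i$-stars in $G[U_1]$, and let $W$ be the set of vertices covered by $\mathcal{S}$. Then $|W|=(i+1)|\mathcal{S}|$, and by maximality every vertex of $U_1\setminus W$ has fewer than $i$ neighbours in $U_1\setminus W$. (This is the only place $i\ge 1$ enters.)

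\textbf{Counting edges from $U_H\setminus W$ to $W$.} Consider a vertex $v\in U_H\setminus W$. It has at least $\delta^*$ neighbours in $U_1$, and at most $i-1$ of them lie outside $W$, so
\[
d_G(v,W)\ge \delta^*-i+1 .
\]
Summing over $U_H\setminus W$ gives
\[
e_G(U_H\setminus W,\,W)\ \ge\ |U_H\setminus W|\,(\delta^*-i+1).
\]
For the upper bound, each vertex of $W$ has at most $\Delta$ neighbours in $U_1$, so
\[
e_G(U_H\setminus W,\,W)\le |W|\,\Delta=(i+1)|\mathcal{S}|\,\Delta .
\]

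\textbf{Finishing.} There are two cases. If $|W|\ge |U_H|/2$, then $|\mathcal{S}|\ge |U_H|/(2(i+1))$. This exceeds the claimed bound, because $\delta^*-i+1\le\Delta$ and $|U_H|\ge(1-2\varepsilon_1)|U_1|$. Otherwise $|U_H\setminus W|\ge |U_H|/2\ge (1-2\varepsilon_1)|U_1|/2$, and combining the two edge bounds yields
\[
|\mathcal{S}|\ \ge\ \frac{(\delta^*-i+1)(1-2\varepsilon_1)|U_1|}{2(i+1)\Delta}.
\]
In either case $\nu_i(G[U_1])\ge|\mathcal{S}|$ satisfies the stated inequality.

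The only delicate point is the case split that guarantees $|U_H\setminus W|$ is large. The factor $1/2$ in the statement absorbs exactly this split, and the estimate $|U_H|\ge(1-2\varepsilon_1)|U_1|$ was established just before the proposition.
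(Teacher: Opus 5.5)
Your proof is correct and is essentially the paper's argument. You use an extremal family of $i$-stars, the same double count of edges between the uncovered vertices of $U_H$ and the covered set $W$, and the same two inputs $\delta^*-i+1\le\Delta$ and $|U_H|\ge(1-2\varepsilon_1)|U_1|$. The only difference is cosmetic: instead of your case split on $|W|$ versus $|U_H|/2$, the paper rearranges $\bigl((1-2\varepsilon_1)|U_1|-|W|\bigr)(\delta^*-i+1)\le |W|\Delta$ directly to get $|\mathcal{S}|\ge \frac{(1-2\varepsilon_1)|U_1|(\delta^*-i+1)}{(i+1)(\Delta+\delta^*-i+1)}$, and then bounds the denominator using $\delta^*-i+1\le\Delta$.
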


\begin{proof}Let $\mathcal{S}$ be a maximum collection of $i$-stars in $G[U_1]$ and let $t=|\mathcal{S}|$. Note that $|V(\mathcal{S})|=t(i+1)$. Denote by $U=U_1\setminus V(\mathcal{S})$. We will bound $e_G(U, V(\mathcal{S}))$ as follows. By the maximality of $\mathcal{S}$, $G[U]$ contains no $i$-star and so $\Delta(G[U])\le i-1$. For any vertex $v\in U\cap U_H$, we have
\[d(v, V(\mathcal{S}))= d(v, U_1) - d(v, U) \ge \delta_{U_1}(U_H)- (i-1). \]
It implies that
\begin{align*}e_G(U, V(\mathcal{S}))&\ge e_G(U\cap U_H, V(\mathcal{S})) \ge |U \cap U_H| (\delta_{U_1}(U_H) - i + 1)\\
&\ge (|U_H| - |V(\mathcal{S})|)(\delta_{U_1}(U_H)- i + 1) \ge ((1-2\varepsilon_1)|U_1| - t(i+1))(\delta_{U_1}(U_H)- i + 1).
\end{align*}

On the other hand,
\begin{align*}
   e_G(U, V(\mathcal{S})) \le \sum_{v \in V(\mathcal{S})} d(v, U_1) \le |V(\mathcal{S})| \Delta(G[U_1]) = t(i+1)\Delta(G[U_1]).
\end{align*}
Combining the lower bound and upper bound on $e_G(U, V(\mathcal{S}))$, we obtain
\begin{align*} ((1-2\varepsilon_1)|U_1| - t(i+1))(\delta_{U_1}(U_H) - i + 1) \le t(i+1)\Delta(G[U_1]). \end{align*}
It follows that
\begin{align*}
 t\ge \frac{(1-2\varepsilon_1)|U_1|(\delta_{U_1}(U_H)- i + 1)}{(i+1)(\Delta(G[U_1])+\delta_{U_1}(U_H)-i+1)}\ge \frac{(1-2\varepsilon_1)|U_1|(\delta_{U_1}(U_H)- i + 1)}{2(i+1)\Delta(G[U_1])},
 \end{align*}
where the last inequality holds as $\delta_{U_1}(U_H)- i + 1 \le \Delta(G[U_1])$.
\end{proof}

We now apply Proposition~\ref{prop:nui} with $i=\omega$. Since $\delta_{U_1}(U_H)\geq x$ and $\Delta(G[U_1])\leq \varepsilon_1|U_1|$, we have
\begin{align*}
    \nu_\omega(G[U_1]) \ge (x - \omega + 1) \frac{(1-2\varepsilon_1)|U_1|}{2(\omega+1)\varepsilon_1|U_1|}=(x - \omega + 1) \frac{(1-2\varepsilon_1)}{2(\omega+1)\varepsilon_1}
\end{align*}

If $x\leq 2\omega$, then the imbalance is bounded by a constant depending only on $H$, and we may leave these vertices uncovered. Thus we may assume that $x>2\omega$. Since $\varepsilon_1$ is chosen sufficiently small, so $\nu_\omega(G[U_1])\geq x$ and $G[U_1]$ contains $x$ vertex-disjoint $\omega$-stars. For each star, move its center from $U_1$ to one of the deficient sets $U_i,i>1$. Since the center has $\omega$ neighbors in $U_1$, we can immediately construct a copy of $H$ using this center in the appropriate color-class, its $\omega$ neighbors in $U_1$, and suitable vertices from the remaining parts. Removing these $x$ copies of $H$ decreases $|U_1|$ by exactly $x$ more vertices than in the deficient parts, thereby balancing the sets.
After this operation, the remaining sets $U_1,\ldots,U_k$ have size ratio $(1,\ldots, 1, \alpha)$
up to a bounded error depending only on $H$. Moreover, the pairs between distinct parts remain super-regular. Therefore, by applying Lemma~\ref{lemma:blowup}, to the remaining graph, we obtain an $H$-tiling covering all but a constant number of vertices. This completes the proof in the case $t=k-1$.

\subsection{Extremal subcase (II): $t<k-1$}\label{subsec:II}

Let $B = V(R)\setminus A$. We write $V_A$ for the union of the clusters of $G$ corresponding to vertices of $A$, and $V_B$ for the union of the clusters corresponding to vertices of $B$, together with the current exceptional set $V_0$. Let $H_0$ be the $(k-t)$-partite bottle graph with width $\omega$ and neck $\sigma$. Our goal is to find an almost perfect $H_0$-tiling of $V_B$ such that each copy of $H_0$ has a large common neighborhood (defined as common neighbors of the vertices
in $H_0$) in $V_A$. Thus we can apply Hall's theorem to match each copy of $H_0$ with a copy of $K_t(\omega)$ from $V_A$. This yields the desired $H$-tiling in $G$, which leaves only a constant number of vertices uncovered.

The construction of the $H_0$-tiling in $V_B$ is analogous to the procedure used in the general subcase. We first tile $V_B$ with $\mathcal{R}_{\varepsilon,k-t}^{\alpha'}(L_1)$, which are unbalanced $\varepsilon$-regular $(k-t)$-cliques. Then we move vertices of $V_0$ to $V_B$. The only difference is that we have to take special
care of the vertices in $V_B$ whose degrees in $V_A$ are small.

Let us get into some details about the tiling of $V_B$ with $\mathcal{R}_{\varepsilon,k-t}^{\alpha'}(L_1)$. We can take advantage of the existing clique-cover. By definition, $B = \{B_0, B_1, \ldots, B_{k-t}\}$, where $B_{j-t} = \bigcup_{K^j \in \Phi_j \setminus \Phi_j^0} B_t(K^j)$ for $j\in[t,k]$, and $B_0 = \Phi_k^0 \cup \ldots \cup \Phi_t^0$ denotes all $j$-cliques ($j \ge t$) that did not participate in constructing $A$. We repeat the algorithm from Section~\ref{sec:general}. As before, $\mathcal{K}$ denotes the cluster clique, i.e., the subgraph of $G$ corresponding to a clique $K$ in the reduced graph. Similarly, $\mathcal{B}_i$ denotes the family of cluster cliques corresponding to $B_i$ for $i\in[ 0, k-t]$. Our goal is to convert
every cluster clique $\mathcal{K} \in \mathcal{B}_1, \mathcal{B}_2, \ldots, \mathcal{B}_{k-t-1}$ to a copy of $\mathcal{R}_{\varepsilon,k-t}^{\alpha'}(L_1)$. The remaining graphs in $\mathcal{B}_{k-t}$ and $\mathcal{B}_0$ are naturally partitioned into copies of $\mathcal{R}_{\varepsilon,k-t}^{\alpha'}(L_1)$.

To explain why
this algorithm is feasible, we will reuse the calculations in Section~\ref{sec:general}. It is easy to see that if $K' \in B_{j-t}$, $K'' \in B_{k-t}$ (for $t \le j < k$), came from $K^j \in \Phi_j$ and $K^k \in \Phi_k$, respectively, and $K^j \hookrightarrow K^k$, then $K' \hookrightarrow K''$. Thus, for any $K \in B_{j-t}$, the number of well-connected $(k-t)$-cliques is at least $\Lambda(K^j) \ge \lambda_i$, if $K$ was generated from $K^j$. In the key expressions~\eqref{eq:I1} and~\eqref{eq:Ii2}, even though $s< \mu$, we use $\sigma_i = \sum_{j \ge i} j\varphi_{k-j} \ge C^{j_0}\varepsilon = \mu'$ for all $i \le t$; then $\mu'$ plays the same role as $\mu$. Specifically, the term $\mu_1$ in~\eqref{eq:mu1} is replaced by $\frac{\alpha}{1-\alpha}\mu'$. Hence, $\alpha' - \alpha = c_1\mu'$, and eventually we find an $\mathcal{R}_{\varepsilon,k-t}^{\alpha'}(L_1)$-factor of $V_B$.

Before inserting $V_0$ into $V_B$, we will move at most $2\varepsilon'|V_B|$ vertices from $V_B$ to $U_i$ if
\begin{align}\label{eq:degreevUi}
d(v, U_i) < \beta_1|U_i|,
\end{align}
where $\varepsilon' \ll \beta_1 \ll \mu'$. We may move additional vertices from $\mathcal{R}_{k-t}^{\alpha'}(L_1)$ to $V_0$ in order to maintain the ratio of cluster sizes as $(1, \ldots, 1, \alpha')$. If any cluster loses more than half of its vertices due to~\eqref{eq:degreevUi}, all vertices in that clique are removed to $V_0$. It is easy to verify that the resulting exceptional set $V_0$ satisfies $|V_0| < 2\varepsilon'n/\alpha'$.

To complete the proof of the extremal cases, we proceed by cases based on the sizes of $U_i$.

\noindent\textbf{(a) The ideal case:} $|U_i| = \left(\frac{1}{k-1} - \gamma\right)n$ for all $i\in[ 1, t]$.

Since the density between all pairs of $U_i$ is almost $1$, we first apply the Blow-up Lemma (Lemma~\ref{lemma:blowup}) to $U_1 \cup \ldots \cup U_t$ to obtain a $K_t(\omega)$-factor. Note that by~\eqref{eq:degreec}, the neighborhood of any copy of $K_t(\omega)$ in $V_B$ is almost the entire $V_B$. Let $V_{bad}=\{v\in V_B:d_G(v,A)\leq (1-\beta_2)|A|\}$,
where $\varepsilon' \ll \beta_2 \ll \beta_1$. It is easy to see that $|V_{bad}| < \frac{\varepsilon'}{\beta_2} n < \beta_2 n$.

We assume that after inserting $V_0$ into $V_B$, we obtain an $H_0$-tiling such that each copy of $H_0$ contains at most one vertex of $V_{bad}$. This assumption leads to an easy matching
between $H_0$ copies from $V_B$ and $K_t(\omega)$ copies from $V_A$. We can first use the greedy algorithm to
match those $H_0$'s that contain vertices from $V_{bad}$, then apply Hall's Theorem to handle the remaining vertices. The number of uncovered vertices in the $H$-tiling of $G$ will be proportional to the number of leftover vertices from the $H_0$-tiling in $V_B$.

To ensure each copy of $H_{k-t}$ contains at most one $V_{bad}$ vertex, we move all $V_{bad}$ vertices (and some additional vertices to maintain cluster size ratios in those cliques) to $V_0$. Since $\beta_2 \ll \mu$, we can still insert vertices in the new $V_0$ back into $V_B$.

\noindent\textbf{(b) The defective cases:} Suppose first that for some $i_1\in[1,t]$,
$|U_{i_1}|<\left(\frac{1}{k-1}-\gamma\right)n$. In this case, instead of using only copies of $K_t(\omega)$ in $V_A$, we use copies of the $t$-partite graph obtained by placing the $\sigma$-vertex class in $U_{i_1}$ and the $\omega$-vertex classes in the remaining $U_i$'s. To complete each copy of $H$, we match such a copy in $V_A$ with a copy of $K_{k-t}(\omega)$ in $V_B$. This is feasible because $\alpha'-\alpha=c_1\mu'\gg \varepsilon'$, so the tiling of $V_B$ contains enough flexibility to supply the required number of $K_{k-t}(\omega)$-copies.

If for some $i_1\in[1,t]$, $|U_{i_1}|>\left(\frac{1}{k-1}-\gamma\right)n$. As in Subcase~(I), we find $|U_{i_1}|-\left(\frac{1}{k-1}-\gamma\right)n$ vertex-disjoint $\omega$-stars inside $G[U_{i_1}]$. We then move their centers to parts whose sizes are below the ideal value, either to another $U_i$ or to $V_B$, and remove the corresponding copies of $H$. This balances the sizes of the parts up to a bounded error. After this balancing step, the argument reduces to the ideal case above.

This completes the proof of the extremal case $t<k-1$, and hence finishes the proof of Theorem~\ref{thm:maintheorem} when $\sigma<\omega$.

\section{The case when $\sigma=\omega$}\label{sec:balancedcase}

In this section, we prove Theorem~\ref{thm:main} in the case $\sigma=\omega$. In this case $H=K_k(\omega)$, $h=|V(H)|=k\omega$, and $\alpha=\sigma/\omega=1$. The Ore-type condition~\eqref{eq:degree} is equivalent to the following: for all nonadjacent vertices $x,y\in V(G)$,
\begin{align}\label{eq:degreebalanced}
d_G(x)+d_G(y)\geq 2\left(1-\frac{1}{k}\right)n.
\end{align}

The proof follows the same general strategy as in Section~\ref{sec:general}, but is simpler because the target graph is balanced. Recall that
\begin{align*}
0<\varepsilon \ll d \ll \mu \ll 1.
\end{align*}
By Lemma \ref{lemma:Rdegree}, for all nonadjacent clusters $V_i,V_j\in V(R)$,
	\[
	d_R(V_i)+d_R(V_j) \geq 2\left(1-\frac{1}{k} - d - 2\varepsilon\right)\ell.
	\]

In the balanced case, it is enough to find a $K_k$-tiling in the reduced graph $R$. The following Ore-type lemma allows us to handle the error term $2(d+2\varepsilon)\ell$.

\begin{lemma}\label{lemma:Kktiling}
	Let $G$ be a graph on $n$ vertices, and let $k \geq 2$ be an integer and $s$ be a natural number. If for all nonadjacent vertices $x,y\in V(G)$,
\[
d_G(x)+d_G(y)\geq 2 \left(1-\frac{1}{k}\right)n -2s,
\]
then $G$ has a $K_k$-tiling covering all but at most $k(k-1)s + (k-1)^2$ vertices.
\end{lemma}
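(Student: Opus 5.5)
Plan: reduce to an exact Ore-type result by adding universal vertices. Let $G^+$ be obtained from $G$ by adding a set $W$ of $w$ new vertices, each adjacent to every other vertex (in $W$ and in $V(G)$). Put $N=n+w$. Nonadjacent pairs of $G^+$ lie in $V(G)$, and for such $x,y$ we have
\[
d_{G^+}(x)+d_{G^+}(y)\ge 2\left(1-\tfrac1k\right)n-2s+2w .
\]
The goal is the Kierstead--Kostochka bound of Theorem~\ref{thm:K2008Kr-factor}, namely $2(1-\frac1k)N-1$. It suffices that $2w-2s\ge \frac{2w}{k}-1$, i.e.\ $w\ge \frac{k(2s-1)}{2(k-1)}$. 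So take $w$ to be the least integer with $w\ge \frac{ks}{k-1}$ and with $k\mid n+w$. The divisibility adds at most $k-1$, so $w\le \frac{ks}{k-1}+k$. In fact it is cleaner to take $w$ a suitable integer in $[ks/(k-1),\,ks/(k-1)+k]$ satisfying $k\mid n+w$.

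By Theorem~\ref{thm:K2008Kr-factor}, $G^+$ has a $K_k$-factor. Each vertex of $W$ lies in exactly one clique of this factor. Discard every clique that meets $W$: at most $w$ cliques are removed, and the rest form a $K_k$-tiling of $G$. The uncovered vertices of $G$ are the non-$W$ vertices of discarded cliques, so there are at most $w(k-1)$ of them. A discarded clique with $j$ vertices of $W$ contributes only $k-j$ uncovered vertices, so the count is really at most $(k-1)w$.

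Bounding: $(k-1)w\le (k-1)\bigl(\frac{ks}{k-1}+k-1\bigr)=ks+(k-1)^2$, which is even better than the claimed $k(k-1)s+(k-1)^2$. One has to check the rounding, since $\lceil ks/(k-1)\rceil$ plus the divisibility adjustment is at most $\frac{ks}{k-1}+k-1$ when chosen carefully, or else absorb the slack into the factor $k(k-1)s$. This is generous for $s\ge1$, and for $s=0$ we need $w\le k-1$, which holds since we can take $w\in[0,k-1]$ with $k\mid n+w$; the required inequality $0\ge \frac{2w}{k}-1$ fails only if $w>k/2$. That small case is the only delicate point. I would fix it by using the stated Ore slack $-1$ more carefully, or by allowing $w$ up to $2k$ and absorbing the extra $k(k-1)$ into the term $k(k-1)s$ when $s\ge1$. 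For $s=0$, I would instead pass to $n'\le n$ by deleting at most $k-1$ vertices of $G$ of minimum degree; this costs at most $k-1$ in each degree sum, which is compensated by adding $w'$ universal vertices as above. The main obstacle is therefore only this bookkeeping of the divisibility and rounding constants, which I expect to fit within $(k-1)^2$.
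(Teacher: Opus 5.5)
Your plan (add universal vertices, apply Theorem~\ref{thm:K2008Kr-factor}, discard the cliques meeting $W$) is the paper's plan. However, the number of new vertices is wrong, and this breaks the argument.

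Adding $w$ universal vertices raises each nonadjacent degree sum by $2w$. But it also raises the threshold from $2(1-\frac1k)n$ to $2(1-\frac1k)(n+w)$, i.e.\ by $2(1-\frac1k)w$. So the net gain is only $2w/k$. The correct requirement is $2w-2s\ge 2(1-\frac1k)w-1$, i.e.\ $w\ge ks-\frac k2$. You instead wrote $2w-2s\ge \frac{2w}{k}-1$ and concluded $w\approx\frac{ks}{k-1}$.

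With your choice the Kierstead--Kostochka hypothesis genuinely fails. Take $k=3$, $s=2$, $n=30$, $G=K_{12,12,6}$, which meets the hypothesis with equality on the parts of size $12$. Your rule gives $w=3$. Two vertices in a part of size $12$ then have degree sum $42<43=2\cdot\frac23\cdot 33-1$. Indeed $G^+$ has no $K_3$-factor, since $11$ triangles cannot cover an independent set of size $12$.

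Your claimed stronger bound $ks+(k-1)^2$ is in fact false. Consider the complete $k$-partite graph with $k-1$ parts of size $\frac nk+s$ and one part of size $\frac nk-(k-1)s$. It satisfies the hypothesis, yet every $K_k$-tiling misses at least $k(k-1)s$ vertices. This exceeds $ks+(k-1)^2$ whenever $k\ge3$ and $s\ge2$. So no rounding or bookkeeping can rescue $w\approx ks/(k-1)$; the factor $k(k-1)s$ in the statement is necessary.

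The fix is to take $w=ks+r$ with $0\le r\le k-1$ and $k\mid n+w$, which is exactly the paper's choice. Then for nonadjacent $x,y\in V(G)$,
\[
d_{G^+}(x)+d_{G^+}(y)\ge 2\left(1-\tfrac1k\right)n-2s+2(ks+r)=2\left(1-\tfrac1k\right)(n+w)+\tfrac{2r}{k}.
\]
So Theorem~\ref{thm:K2008Kr-factor} applies, without even using its $-1$ slack. Discarding the cliques that meet $W$ leaves at most $(k-1)w\le(k-1)(ks+k-1)=k(k-1)s+(k-1)^2$ uncovered vertices.

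The ``delicate'' case $s=0$ you worry about is an artifact of the same sign error. With the correct net gain $2w/k\ge0$, every $w\ge0$ works when $s=0$, so no deletion of low-degree vertices is needed.
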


\begin{proof}
Let $r$ be an integer with $0\leq r\leq k-1$ such that $\widetilde n:=n+ks+r$ is divisible by $k$. Add $ks+r$ new vertices to $G$, and join every new vertex to every other vertex. Denote the resulting graph by $\widetilde G$. For any two nonadjacent vertices $x,y\in V(\widetilde G)$, we must have $x,y\in V(G)$. Hence
\[d_{\widetilde G}(x)+d_{\widetilde G}(y)=d_G(x)+d_G(y)+2(ks+r)\geq 2\left(1-\frac{1}{k}\right)n-2s+2(ks+r)\geq 2\left(1-\frac{1}{k}\right)\widetilde n.\]
Therefore, by Theorem~\ref{thm:K2008Kr-factor}, the graph $\widetilde G$ has a $K_k$-factor.

Removing all copies of $K_k$ that contain at least one new vertex leaves a $K_k$-tiling of $G$. The number of new vertices is at most $ks+k-1$, and each copy of $K_k$ containing a new vertex contains at most $k-1$ original vertices of $G$. Therefore, the number of uncovered vertices of $G$ is at most $(k-1)(ks+k-1)=k(k-1)s+(k-1)^2$.
\end{proof}

\subsection{Decomposition lemma}

To prepare for the application of the Blow-up Lemma, we prove a balanced analogue of Lemma~\ref{lem:DS}.

\begin{lemma}\label{lemma:BDS}
For sufficiently large $\ell$, after moving at most $2k^2dn$ vertices of $G$ into the exceptional set $V_0$, the graph $G''=G\setminus V_0$ can be decomposed into vertex-disjoint balanced $\varepsilon$-regular $k$-cliques
$\mathcal{R}_{\varepsilon,k}^{1}(L)$.
\end{lemma}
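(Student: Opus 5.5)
We apply Lemma~\ref{lemma:Kktiling} to the reduced graph $R$ and then pull the resulting clique tiling back to $G$. By Lemma~\ref{lemma:Rdegree}, every pair of nonadjacent clusters $V_i,V_j$ satisfies
\[
d_R(V_i)+d_R(V_j)\geq 2\left(1-\frac1k\right)\ell-2(d+2\varepsilon)\ell .
\]
Set $s=\lceil (d+2\varepsilon)\ell\rceil$, so that $s\le 2d\ell$ because $\varepsilon\ll d$. Lemma~\ref{lemma:Kktiling} then gives a $K_k$-tiling $\mathcal{T}$ of $R$ that misses at most
\[
k(k-1)s+(k-1)^2\le 2k(k-1)d\ell+(k-1)^2
\]
clusters. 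For $\ell$ sufficiently large, and since $d\gg\varepsilon$ while $\ell\geq m(\varepsilon)$ can be taken large, this is at most $2k^2d\ell-(k-1)^2$ plus a lower-order term. In any case it is at most $2k^2 d\ell$ for large $\ell$, because $(k-1)^2\le d\ell$ once $\ell\ge (k-1)^2/d$.

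Next we move every vertex of $G$ lying in an uncovered cluster into $V_0$. Each cluster has size $L\le n/\ell$, so this adds at most $2k^2d\ell\cdot L\le 2k^2dn$ vertices to $V_0$, which is the bound claimed. What remains of $G''$ is the union, over the cliques $K\in\mathcal{T}$, of the clusters of $K$. Each such clique consists of $k$ clusters of equal size $L$, and every pair among them is an edge of $R$, hence an $\varepsilon$-regular pair of density greater than $d$ in $G'\subseteq G$. So each clique of $\mathcal{T}$ gives a balanced $\varepsilon$-regular $k$-clique $\mathcal{R}^1_{\varepsilon,k}(L)$, and these are pairwise vertex-disjoint. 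This is exactly the decomposition required.

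The main obstacle is the arithmetic of the error term. The additive $(k-1)^2$ in Lemma~\ref{lemma:Kktiling} has to be absorbed into $2k^2d\ell$, which is why the statement asks for $\ell$ sufficiently large. The conversion $s=\lceil (d+2\varepsilon)\ell\rceil\le 2d\ell$ also uses $\varepsilon\ll d$. Beyond that, no slicing (Lemma~\ref{lem:SL}) or conversion algorithm is needed, unlike in Lemma~\ref{lem:DS}, since the target cliques are balanced and already have cluster size $L$.
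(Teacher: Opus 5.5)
Your proposal is correct and matches the paper's proof: apply Lemma~\ref{lemma:Kktiling} to $R$ with $s\approx(d+2\varepsilon)\ell$, move the clusters of the uncovered vertices of $R$ into $V_0$, and read off each $K_k$ of the tiling as a balanced $\varepsilon$-regular $k$-clique $\mathcal{R}^1_{\varepsilon,k}(L)$. Your choice $s=\lceil (d+2\varepsilon)\ell\rceil$ is a small gain in rigor, since Lemma~\ref{lemma:Kktiling} is stated for integer $s$, and absorbing the additive $(k-1)^2$ term via $\ell\ge (k-1)^2/d$ is valid.
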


\begin{proof} By Lemma~\ref{lemma:Rdegree}, for all nonadjacent $X,Y\in V(R)$, $d_R(X)+d_R(Y)\geq 2\left(1-\frac{1}{k}-d-2\varepsilon\right)\ell$. Applying Lemma~\ref{lemma:Kktiling} to $R$ with $s=(d+2\varepsilon)\ell$, we obtain a $K_k$-tiling of $R$ covering all but at most $k(k-1)(d+2\varepsilon)\ell+(k-1)^2$ vertices of $R$.

We move all clusters corresponding to these uncovered vertices of $R$ into the exceptional set $V_0$. Since $\varepsilon\leq d\ll 1$ and $\ell$ is sufficiently large, the number of vertices moved is at most
\[
\left(k(k-1)(d+2\varepsilon)\ell+(k-1)^2\right)L \leq 2k(k-1)dn \leq 2k^2dn.
\]
Thus, after updating $V_0$, we still have $|V_0|\leq 2k^2dn$.
Each $K_k$ in the $K_k$-tiling of $R$ corresponds to $k$ clusters $V_{i_1},V_{i_2},\ldots,V_{i_k}$ in $G''$, such that every pair of clusters forms an $\varepsilon$-regular pair of density greater than $d$ by the definition of the reduced graph. Since all clusters have the same size $L$, these clusters form a balanced $\varepsilon$-regular $k$-clique $\mathcal{R}_{\varepsilon,k}^{1}(L)$.
The collection of all such regular $k$-cliques gives the desired decomposition of $G''$.
\end{proof}

\subsection{Handling exceptional vertices and bounding the leftover vertices}

The following lemma is the balanced-case analogue of Lemma~\ref{lem:generalcase}.

\begin{lemma}\label{lemma:balanced-exceptional}
Assume that $\varepsilon\leq \theta\ll 1$. Let $G$ be an $n$-vertex graph satisfying~\eqref{eq:degreebalanced}, and let $V_0\subseteq V(G)$ be an exceptional set with $|V_0|\leq \theta n$. If $G''=G\setminus V_0$ has an $\mathcal{R}_{\varepsilon,k}^{1}(L_1)$-factor for sufficiently large $L_1$. Then $G$ contains an $H$-tiling covering all but at most $3k^2\omega+h$ vertices.
\end{lemma}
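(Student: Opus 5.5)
We follow the two-phase scheme of the proof of Lemma~\ref{lem:generalcase}, specialized to $\alpha=1$, $\gamma=\frac{1}{k(k-1)}$ and $H=K_k(\omega)$. Let $\mathbf{R}$ be the $\mathcal{R}^{1}_{\varepsilon,k}(L_1)$-factor of $G''$. Split $V_0$ into $V_0^1=\{x\in V_0: d_G(x)<(1-\frac1k)n\}$ and $V_0^2=V_0\setminus V_0^1$. By~\eqref{eq:degreebalanced}, $V_0^1$ is a clique, so $G[V_0^1]$ can be tiled by copies of $K_k(\omega)$ with fewer than $h$ vertices left over.

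\textbf{Phase I.} Process the vertices $v\in V_0^2$ one at a time. For each $v$, choose an element $\mathcal{R}=\{U_1,\dots,U_k\}\in\mathbf{R}$ with $v\sim U_i$ for at least $k-1$ clusters, say all $i\ne j$. Then $v$ goes into the colour class corresponding to $U_j$. By Lemma~\ref{lem:komlos1995} we obtain a copy of $H$ consisting of $v$, $\omega-1$ vertices of $U_j$, and $\omega$ vertices from each other $U_i$. Each element may be used at most $\sqrt{\theta}L_1$ times.

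Availability of such elements comes from the counting of Proposition~\ref{prop:propotion}. An element failing the condition contributes at most $(k-2+2d)L_1$ neighbours of $v$, while $d(v)\ge(1-\frac1k)n-\theta hn$. Hence the proportion $m$ of suitable elements satisfies
\[
1-\tfrac1k-\theta h\le (1-m)\tfrac{k-2+2d}{k}+m,
\]
which gives $m\ge \frac{1-2d-k\theta h}{2}>\frac14$. Only a $k\sqrt\theta$ fraction of elements is ever exhausted, so a suitable available element always exists. Afterwards we clean each element to make its pairs $(\varepsilon,d/2)$-super-regular, and we reinsert the few newly exceptional vertices by the same procedure.

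Since $\sigma=\omega$, no vertex-shifting inside an element is possible, so the per-element bound is obtained differently. In each element $\mathcal{R}'$ the cluster sizes now differ by at most $\sqrt{\theta}L_1$. We discard vertices so that each cluster has size equal to the minimum cluster size rounded down to a multiple of $\omega$, and apply Lemma~\ref{lemma:blowup} to obtain a $K_k(\omega)$-factor of the rest. Doing this directly would leave up to $\sqrt\theta L_1$ vertices per cluster, which is too many. So we first \emph{rebalance} inside Phase I: when choosing which colour class to assign to $v$, or which element to use, we prefer to take vertices from the currently largest clusters.

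The rebalancing step is the main obstacle, and I would handle it as follows. If $v\sim U_i$ for all $i$, we are free to choose $j$, and we pick $j$ to be the cluster with the most remaining vertices. Otherwise $j$ is forced, and imbalance accumulates. To repair it I would use the transfer digraph $\mathcal{D}$ exactly as in Phase II. An edge $(U,U_1)$ lets one copy of $H$ with a single vertex from $U$ move one unit of surplus from $U$ to $U_1$. Surplus vertices are then pushed along directed paths into a sink set.

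For the per-element leftover, choose a best and then most balanced tiling as in Proposition~\ref{prop:R'}. With $\sigma=\omega$, every copy takes exactly $\omega$ vertices from each cluster. If the leftover counts are $a_i$, then $\min_i a_i<\omega$, and the surplus $a_i-\min_i a_i$ is what must be transferred. Pushing surplus one vertex at a time through $\mathcal{D}$ reduces every non-sink element to at most $k\omega$ leftover vertices in total, and then to $0$ once those remainders are also pushed into sinks.

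For the sink-set size, Proposition~\ref{prop:degreeU} with $\alpha'=\alpha=1$ gives out-degree at least $\frac{(k-1)^2\gamma}{k}|V(\mathcal{D})|=\frac{k-1}{k^2}|V(\mathcal{D})|$ for high-degree clusters. The low-degree clusters $J_L$ form a clique in $R$ and cost at most $k$ extra vertices. Lemma~\ref{lem:sinkset} then gives a sink set of at most $\frac{k^2}{k-1}\le 2k$ clusters. Each sink element leaves fewer than $\omega+(k-1)\omega\le k\omega$ vertices uncovered, where the $(k-1)\omega$ term bounds the surplus that cannot be balanced. The total is therefore at most
\[
2k\cdot k\omega+k+h\le 3k^2\omega+h,
\]
where the final inequality uses $k\le k^2\omega$. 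The delicate point to check is that the transfers never destroy super-regularity. This holds because only $O_H(1)$ vertices per cluster are moved in Phase II, while the Phase I imbalance is absorbed by discarding vertices only to equalize sizes, which happens before the blow-up.
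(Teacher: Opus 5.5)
There is a genuine gap, and it sits exactly where you flag the main obstacle. Two of your observations are right. After Phase I the clusters of an element can differ in size by up to about $\sqrt{\theta}L_1$: each forced placement of $v$ in class $j$ leaves $U_j$ one vertex heavier than the others, and the cleaning step removes up to $(k-1)\varepsilon L_1$ vertices per cluster non-uniformly. Also, with $\sigma=\omega$, every copy of $K_k(\omega)$ embedded in an element uses exactly $\omega$ vertices of each cluster. So any such tiling of an element with cluster sizes $s_1,\ldots,s_k$ misses at least $\max_i s_i-\min_i s_i$ vertices.

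This is also why the paper's route through Proposition~\ref{prop:single-clique-leftover} cannot be borrowed. Its contradiction only excludes $a_i\ge\omega$ for all $i$ simultaneously, so it proves $\min_i a_i<\omega$, not $a_i<\omega$ for every $i$.

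Your repair, however, does not remove the imbalance.
\begin{enumerate}[(1)]
\item The free choice goes in the wrong direction. Putting $v$ in class $j$ takes only $\omega-1$ vertices from $U_j$, so $U_j$ should be the smallest cluster, not the largest.
\item More seriously, forced placements create up to $|V_0^2|$ units of surplus, which can be linear in $n$. A transfer along an edge $(C^{i-1},C^i)$ of $\mathcal{D}$ merely moves one unit from $C^{i-1}$ to $C^i$. Routing everything into a sink set of at most $2k$ clusters therefore concentrates up to $\Theta(\theta n)$ surplus there, and nothing forces the clusters of a sink element to receive equal amounts.
\item Your claim that each sink element leaves fewer than $\omega+(k-1)\omega$ vertices, with ``$(k-1)\omega$ bounding the surplus that cannot be balanced,'' is exactly the in-element rebalancing of Proposition~\ref{prop:R'}. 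You yourself noted that this is unavailable when $\sigma=\omega$.
\item Your closing justifications contradict the plan. Phase II would then move far more than $O_H(1)$ vertices per cluster. And ``discarding vertices to equalize sizes'' is the step you earlier rejected as losing up to $\sqrt{\theta}L_1$ vertices per cluster.
\end{enumerate}

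What is missing is a mechanism that \emph{cancels} imbalance rather than relocating it. Two possible directions:
\begin{itemize}
\item A transfer along $(U,W)\in E(\mathcal{D})$ in which $W$ is currently the smallest cluster of its element does reduce that element's imbalance. You would then need to show that over-full clusters can reach under-full ones in $\mathcal{D}$; reaching a sink set is not enough.
\item You could replace the greedy, arbitrary-order processing of $V_0^2$ by a global assignment, for instance a flow or Hall-type argument over admissible vertex--element pairs. It would have to guarantee that each element receives, up to $O(\omega)$, equally many forced insertions in each of its clusters. The order-$\varepsilon L_1$ imbalance from cleaning can be removed by discarding vertices into $V_0$, since $\varepsilon\ll\theta$, but the reinserted vertices then face the same problem.
\end{itemize}
Without an ingredient of this kind, your argument bounds the leftover only by $O(\sqrt{\theta}L_1)$ per element, not by a constant depending on $H$.
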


We use notation consistent with Subsection~\ref{subsec:exvtx}.
\begin{itemize}
	\item Let $\mathbf{R}$ be the $\mathcal{R}_{\varepsilon,k}^1(L_1)$-factor of $G''$, with each clique $\mathcal{R}\in \mathbf{R}$ having color-classes $U_1,U_2,\ldots,U_k$ and $|U_i|=L_1$ for all $i\in[1,k]$.
	\item For a vertex $v\in V_0$ and a cluster $C\in V(R)$, we write $v\sim C$ if $d_G(v,C)\geq d|C|$.
\end{itemize}

\begin{proof}[Proof of Lemma~\ref{lemma:balanced-exceptional}] We partition $V_0$ into two subsets. Let $V_0^1=\left\{x\in V_0:d_G(x)<\left(1-\frac{1}{k}\right)n\right\}$ and let
$V_0^2=V_0\setminus V_0^1$. By the Ore-type condition~\eqref{eq:degreebalanced}, the set $V_0^1$ induces a clique in $G$. Hence $G[V_0^1]$ can be tiled by copies of $H=K_k(\omega)$, leaving at most $h-1$ vertices uncovered. It remains to cover the vertices of $V_0^2$.

\noindent\textbf{Phase I: Inserting exceptional vertices.}

We process the vertices of $V_0^2$ in an arbitrary order. For each $v\in V_0^2$, we choose an available clique $\mathcal{R}=\{U_1,\ldots,U_k\}\in\mathbf{R}$ such that
\begin{equation}\label{eq:adj-condition}
v\sim U_i
\quad\text{for all but at most one cluster }U_i\in\{U_1,\ldots,U_k\}.
\end{equation}

Lemma~\ref{lem:komlos1995} guarantees that we can construct a copy of $H$ containing $v$ as follows.
\begin{itemize}
	\item If $v\sim U_i$ for all $i\in[1,k]$, choose any cluster, say $U_1$, to contain the color-class of $H$ containing $v$. We take $\omega-1$ additional vertices from $U_1$, and $\omega$ vertices from each of the other $k-1$ clusters.
	\item If $v\nsim U_j$ for exactly one $j\in[1,k]$, then we place $v$ in the color-class corresponding to $U_j$. We take $\omega-1$ additional vertices from $U_j$, and $\omega$ vertices from each of the other $k-1$ clusters.
\end{itemize}

To prevent a cluster from losing too many vertices, we stop considering an element of $\mathbf{R}$ if it has been selected $\theta_1 L_1$ times, where $\theta_1=\sqrt{\theta}$.
The following proposition guarantees that sufficiently many valid cliques are always available.

\begin{prop}\label{prop:valid-clique-bound}
For every vertex $v\in V_0^2$, regardless of the order in which the vertices of $V_0^2$ are processed, at least a $1/3$-proportion of the elements of $\mathbf{R}$ satisfy~\eqref{eq:adj-condition}.
\end{prop}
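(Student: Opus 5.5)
I will follow the double-counting argument of Proposition~\ref{prop:propotion}, adapted to the balanced setting where every element of $\mathbf{R}$ has $k$ clusters of size $L_1$, so $|V(\mathcal{R})|=kL_1$ and $|\mathbf{R}|\approx (1-\theta)n/(kL_1)$.

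Fix $v\in V_0^2$ and suppose $t$ vertices of $V_0^2$ have already been processed. Let $G^*$ be the graph left after deleting the $t$ copies of $H$ already removed, each using $h-1$ vertices from elements of $\mathbf{R}$, and ignore $V_0$. Since $v\in V_0^2$ we have $d_G(v)\ge (1-\frac1k)n$. Hence
\[
d_G(v,V(G^*)\setminus V_0)\ge \Bigl(1-\frac1k\Bigr)n-\theta n-t(h-1)\ge \Bigl(1-\frac1k-h\theta\Bigr)n,
\]
using $t\le |V_0|\le\theta n$.

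Now let $m$ be the fraction of elements of $\mathbf{R}$ satisfying~\eqref{eq:adj-condition}. An element failing~\eqref{eq:adj-condition} has at least two clusters $U$ with $d_G(v,U)<dL_1$, so it contributes at most $(k-2)L_1+2dL_1$ neighbours of $v$. A good element contributes at most $kL_1$. Summing over $\mathbf{R}$ and dividing by $n-\theta n\le |\mathbf{R}|kL_1$ (up to the negligible removed vertices) gives
\[
1-\frac1k-h\theta\le (1-m)\frac{k-2+2d}{k}+m .
\]
Rearranging yields
\[
m\Bigl(\frac{2-2d}{k}\Bigr)\ge \frac1k-2d/k-h\theta,
\]
that is, $m\ge \frac{1-2d-kh\theta}{2-2d}$. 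Since $d,\theta\ll 1$, this is larger than $1/3$, indeed close to $1/2$.

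The only delicate point is the bookkeeping: the denominator must be $|V(G'')|$ rather than $n$, and the $t(h-1)$ removed vertices must be accounted for. Both are absorbed because $\theta\ll 1$, and the upper bound may be taken over the original cluster sizes, since removals only decrease neighbourhoods. Unlike Proposition~\ref{prop:propotion}, there is no distinguished small cluster here, so no $\alpha'$ terms appear, and the margin of about $1/2$ versus the target $1/3$ is comfortable.
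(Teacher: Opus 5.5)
Your proposal is correct and is essentially the paper's argument: the same lower bound $d_G(v)\ge(1-\frac{1}{k})n$ reduced by $\theta n+t(h-1)$, the same per-element upper bounds $(k-2+2d)L_1$ versus $kL_1$, and the same conclusion $m\ge\frac{1-2d-kh\theta}{2-2d}\ge\frac{1}{3}$. One bookkeeping point is stated in the wrong direction: the normalization step needs $|\mathbf{R}|kL_1=|V(G'')|\le n$, not $n-\theta n\le|\mathbf{R}|kL_1$. It is this inequality that lets you bound the summed contributions by $\bigl((1-m)\frac{k-2+2d}{k}+m\bigr)n$ and then divide by $n$.
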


\begin{proof}
Let $v\in V_0^2$ be the $(t+1)$-st vertex to be processed, where $t$ denotes the number of vertices of $V_0^2$ that have already been processed. For these first $t$ vertices, we have removed $t$ pairwise disjoint copies of $H$, each containing exactly one processed vertex of $V_0^2$ and $h-1$ vertices from elements of $\mathbf{R}$.

Let $G^*$ be the subgraph induced by the vertices remaining after these $t$ copies have been removed. Since $v\in V_0^2$, we have $d_G(v)\geq \left(1-\frac{1}{k}\right)n$.
Moreover, at most $|V_0|\leq \theta n$ vertices lie in the exceptional set, and at most $t(h-1)$ vertices have been removed from elements of $\mathbf{R}$. Hence
\begin{equation}\label{eq:balanced-lower}
d_G(v,V(G^*))\geq\left(1-\frac{1}{k}\right)n-\theta n-t(h-1).
\end{equation}

Let $m$ be the fraction of cliques in $\mathbf{R}$ satisfying~\eqref{eq:adj-condition}. If a clique does not satisfy~\eqref{eq:adj-condition}, then $v$ fails to be $d$-adjacent to at least two of its clusters. Hence such a clique contributes at most $(k-2+2d)L_1$ neighbors of $v$. A clique satisfying~\eqref{eq:adj-condition} contributes at most $kL_1$ neighbors. Therefore,
\begin{equation}\label{eq:balanced-upper}
d_G(v,V(G^*))\leq\left((1-m)\frac{k-2+2d}{k}+m\right)(n-\theta n-t(h-1)).
\end{equation}

Combining~\eqref{eq:balanced-lower} and~\eqref{eq:balanced-upper}, we obtain $1-\frac{1}{k}-\theta h\leq(1-m)\frac{k-2+2d}{k}+m$.
Since $d,\theta\ll 1$, so $m\geq\frac{1-2d-k\theta h}{2-2d}\ge 1/3$.
\end{proof}

The proportion of unavailable cliques is at most
\[
\frac{|V_0|/(\theta_1L_1)}{|\mathbf{R}|}\leq\frac{\theta n/(\sqrt{\theta}L_1)}{n/(kL_1)}=k\sqrt{\theta}.
\]
Since $k\sqrt{\theta}<1/3$, Proposition~\ref{prop:valid-clique-bound} ensures that an available clique satisfying~\eqref{eq:adj-condition} exists for every vertex of $V_0^2$. Thus all vertices of $V_0^2$ can be covered.
After processing all vertices of $V_0^2$, at most $h-1$ vertices from $V_0^1$ remain uncovered.

We now make all regular pairs super-regular. For each clique $\mathcal{R}=\{U_1,\ldots,U_k\}\in\mathbf{R}$ and each vertex $u\in U_i$, if there exists $j\neq i$ such that $d_G(u,U_j)<(d-\varepsilon)|U_j|$, then we move $u$ into the exceptional set. By $\varepsilon$-regularity, each cluster loses at most $(k-1)\varepsilon |U_i|$ vertices in this way. Hence the number of newly exceptional vertices is at most $k(k-1)\varepsilon n$. Since $\varepsilon\ll\theta$, these new exceptional vertices can be processed by the same argument as above. After doing this, every pair inside each remaining clique is $(\varepsilon,d/2)$-super-regular.

Let $\mathbf{R}'$ denote the resulting family of super-regular balanced $k$-cliques.

\noindent\textbf{Phase II: Bounding the leftover vertices.}

We first bound the number of uncovered vertices in a single super-regular balanced clique.

\begin{prop}\label{prop:single-clique-leftover}
	Each element of $\mathbf{R}'$ has an $H$-tiling that covers all but at most $k(\omega-1)$ vertices.
\end{prop}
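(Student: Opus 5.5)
Fix $\mathcal{R}'=\{U_1,\ldots,U_k\}\in\mathbf{R}'$. All pairs are $(\varepsilon,d/2)$-super-regular, and each cluster has lost at most $\theta_1L_1+(k-1)\varepsilon L_1$ vertices. So the cluster sizes $n_i=|U_i|$ lie between $(1-2\theta_1)L_1$ and $L_1$. In particular, their pairwise differences are much smaller than the sizes themselves. The plan is to follow the proof of Proposition~\ref{prop:R'}: use the Blow-up Lemma (Lemma~\ref{lemma:blowup}) to embed any $H$-tiling of the complete $k$-partite graph $K_{n_1,\ldots,n_k}$ into $\mathcal{R}'$. Since $H=K_k(\omega)$ has bounded maximum degree and its copies are vertex-disjoint, this reduces the problem to a purely arithmetic question about the complete blow-up.

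In $K_{n_1,\ldots,n_k}$, each copy of $K_k(\omega)$ uses exactly $\omega$ vertices from each part, because $H$ must occupy $k$ distinct parts with one color-class in each. Hence a tiling by $q$ copies leaves $n_i-q\omega$ vertices of $U_i$ uncovered. Take $q=\lfloor \min_i n_i/\omega\rfloor$; the complete blow-up obviously has this tiling. Then the number of uncovered vertices is $\sum_i (n_i-q\omega)$. This is at most $k(\omega-1)$ only if the $n_i$ are nearly equal. In general they are not, so the crux is to equalize the cluster sizes first.

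To equalize, I would use the same device as in the paper: move extra vertices into the exceptional set before Phase I ends, or remove from the larger clusters the vertices left over after Phase I. Concretely, let $n^*=\min_i n_i$. In each $U_i$ with $n_i>n^*$, move $n_i-n^*$ vertices to $V_0$ and handle them with the Phase I procedure. By Proposition~\ref{prop:valid-clique-bound}, each such vertex is absorbed into a copy of $H$ through another available clique. Since the number of moved vertices is $O(\theta_1 n)$, the availability counting still works if $\theta$ is chosen small enough relative to $1/k$. Note that $\theta_1=\sqrt\theta$ and the selection cap must be adjusted to something like $\theta^{1/4}$; this is a routine reparametrization.

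This iterative rebalancing is the main obstacle. Absorbing moved vertices changes the sizes of other cliques again. The fix I would try first is to do the rebalancing with bounded error. Each absorption removes $\omega$ vertices from every cluster of the host clique except one, which loses $\omega-1$. So each host clique is unbalanced by at most $1$ per absorption. I would then choose the color-class assignment so that the deficit always falls on the cluster that is currently largest. The first bullet of Phase I allows free choice of the cluster when $v\sim U_i$ for all $i$; in the second bullet the cluster is forced. With this rule, the imbalance of each clique stays at most $1$, i.e.\ $\max_i n_i-\min_i n_i\le 1$ up to the forced cases. Alternatively, after all absorptions, I would simply discard the smallest residual mismatch. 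Once the sizes satisfy $n_i-n^*\le \omega-1$ for all $i$ (in fact after equalization, $n_i-q\omega\le\omega-1$ for every $i$ by the choice of $q$), the tiling above leaves at most $\sum_i(n_i-q\omega)\le k(\omega-1)$ vertices uncovered. It is then realized in $\mathcal{R}'$ by the Blow-up Lemma, proving the proposition.
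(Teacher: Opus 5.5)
Your diagnosis is right, and it is sharper than the paper's own argument. $K_k(\omega)$ has only one proper $k$-colouring up to permuting colours. So every copy of $H$ in the $k$-partite graph $\mathcal{R}'$ uses exactly $\omega$ vertices of each $U_i$, and every $H$-tiling leaves at least $\sum_i(n_i-\min_j n_j)$ vertices uncovered.

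The paper takes a maximum tiling and reaches a contradiction only under the assumption that $a_i\ge\omega$ for \emph{every} $i$. This proves $\min_i a_i\le\omega-1$, not $a_i\le\omega-1$ for all $i$, so the paper never deals with equalization. Phase I does not supply it either:
\begin{itemize}
\item each forced absorption (second bullet) leaves $U_j$ one vertex larger than the other clusters of its clique;
\item one clique can receive up to $\theta_1L_1$ forced absorptions at the same position;
\item the cleaning step deletes up to $(k-1)\varepsilon|U_i|$ vertices from each cluster in unrelated amounts.
\end{itemize}
So modifying the construction, as you propose, is genuinely necessary.

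The gap is that your modification does not produce equal cluster sizes. Your host-choice rule only governs unforced absorptions, and forced absorptions stay arbitrary. (Note also that the host cluster is the one losing only $\omega-1$ vertices, so it should be the currently \emph{smallest} cluster, not the largest.)

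Trimming every clique to $n^*$ and reinserting the trimmed vertices through Phase I creates up to one unit of fresh imbalance per reinserted vertex. Nothing makes the amount trimmed in the next round smaller than in the current one. For example, if $G$ is close to $K_k(n/k)$, essentially every absorption is forced to the position of the vertex's own part. The surplus is then merely shuffled from clique to clique, and the iteration need not terminate. ``Discarding the residual mismatch'' is precisely the unbounded loss the proposition forbids.

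Even an imbalance of at most $1$ would not give the stated constant. The condition $n_i-n^*\le\omega-1$ does not imply $n_i-q\omega\le\omega-1$ (take $n^*\equiv\omega-1\pmod{\omega}$ and $n_i=n^*+1$), so you would only get $k(\omega-1)+k-1$.

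What is missing is a global balancing argument, not a greedy per-vertex choice followed by re-trimming. For instance, you could distribute the exceptional and trimmed vertices among the cliques so that each clique hosts equally many of them at each of its $k$ positions.
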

\begin{proof}
Let $\mathcal{R}'=\{U_1,\ldots,U_k\}\in\mathbf{R}'$. Take a maximum $H$-tiling of $\mathcal{R}'$, and let $a_i$ be the number of uncovered vertices in $U_i$. We claim that $a_i<\omega$ for every $i\in[1,k]$. Otherwise, if $a_i\geq \omega$ for every $i$, then the uncovered vertices contain $\omega$ vertices from each cluster. Since the pairs inside $\mathcal{R}'$ are super-regular, the Blow-up Lemma gives one more copy of $H=K_k(\omega)$, contradicting the maximality of the tiling. Hence $a_i\leq \omega-1$ for all $i\in[1,k]$. Therefore, $\sum_{i=1}^{k}a_i\leq k(\omega-1)$.
This proves the proposition.
\end{proof}

Applying Proposition~\ref{prop:single-clique-leftover} independently to every element of $\mathbf{R}'$ gives an $H$-tiling with at most $k(\omega-1)|\mathbf{R}'|$ uncovered vertices. We now use the same directed-graph transfer argument as in Subsection~\ref{subsec:exvtx} to reduce this to a constant independent of $\varepsilon$.

Define a directed graph $\mathcal{D}$ on the clusters of $G''$. Let $U$ be a cluster, and let $\mathcal{R}=\{U_1,\ldots,U_k\}\in\mathbf{R}'$. There is an edge $(U,U_1)\in E(\mathcal{D})$ if $(U,U_i)\in E(R)$
for all $i\neq 1$. The adjacencies in $R$ are not affected by the vertex removals in Phase I, so $\mathcal{D}$ is well-defined.

We next estimate the out-degree of high-degree clusters.

\begin{prop}\label{prop:out-degree-bound}
	For any $U \in V(\mathcal{D})$ with $d_R(U) \geq \left(1 - \frac{1}{k} - d - 2\varepsilon\right)|V(R)|$, we have $d_{\mathcal{D}}^{+}(U)\geq \frac{1}{2k}|V(\mathcal{D})|$.
\end{prop}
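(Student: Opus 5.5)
We follow the double-counting argument of Proposition~\ref{prop:degreeU}, adapted to the balanced case $\alpha'=1$. Since the adjacencies in $R$ are unaffected by Phase I, every element $\mathcal{R}\in\mathbf{R}'$ still corresponds to a $k$-clique of $R$ on $k$ clusters. Fix a cluster $U$ with $d_R(U)\ge \left(1-\frac1k-d-2\varepsilon\right)|V(R)|$.

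For each $\mathcal{R}=\{U_1,\ldots,U_k\}$, let $a(\mathcal{R})$ be the number of clusters of $\mathcal{R}$ adjacent to $U$ in $R$. Let $m_j$ be the fraction of elements of $\mathbf{R}'$ with $a(\mathcal{R})=j$. By definition of $\mathcal{D}$:
\begin{itemize}
\item if $a(\mathcal{R})=k-1$, then $U$ has exactly one out-neighbour in $\mathcal{R}$ (the unique nonadjacent cluster);
\item if $a(\mathcal{R})=k$, then $U$ has $k$ out-neighbours in $\mathcal{R}$;
\item if $a(\mathcal{R})\le k-2$, then $U$ has none.
\end{itemize}
Hence
\[
d^+_{\mathcal{D}}(U)=(m_{k-1}+k\,m_k)|\mathbf{R}'|.
\]

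To bound this, count $d_R(U)$ clique by clique, and note that the clusters of $G''$ are exactly the clusters in the elements of $\mathbf{R}'$. (Clusters moved into $V_0$ only lower the available count, and this loss is absorbed into the error terms, since at most $2k^2d\ell$ clusters were discarded in Lemma~\ref{lemma:BDS}.) Each clique contributes at most $k-2$ if $a\le k-2$, at most $k-1$ if $a=k-1$, and at most $k$ if $a=k$. With $|V(R)|\approx k|\mathbf{R}'|$, this gives
\[
\left(1-\frac1k-d-2\varepsilon-2k^2d\right)k\le (k-2)+m_{k-1}+2m_k .
\]
Therefore
\[
m_{k-1}+2m_k\ge 1-k(d+2\varepsilon+2k^2d)\ge \tfrac12,
\]
using $d,\varepsilon\ll 1$. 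Since $k\ge2$, we have $m_{k-1}+k\,m_k\ge m_{k-1}+2m_k\ge \frac12$. Combined with $|\mathbf{R}'|=|V(\mathcal{D})|/k$, this yields
\[
d^+_{\mathcal{D}}(U)\ge \frac{1}{2k}|V(\mathcal{D})|.
\]

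The main obstacle is bookkeeping rather than the inequality itself. One must check that the identification $|V(R)|=k|\mathbf{R}'|$ (equivalently, $V(\mathcal{D})$ versus $V(R)$) is accurate up to the small fraction of clusters discarded in Lemma~\ref{lemma:BDS}, so that $d_R(U)$ restricted to clusters lying in elements of $\mathbf{R}'$ is still at least $(1-\frac1k-O(k^2d))k|\mathbf{R}'|$. I would handle this by first subtracting the at most $2k^2d\ell+(k-1)^2$ discarded clusters from $d_R(U)$, then running the count above. There is ample slack, since the target constant $\frac1{2k}$ is far from the roughly $\frac1k$ the computation actually gives.
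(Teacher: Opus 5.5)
Your proposal is correct and follows the paper's proof: classify the cliques of $\mathbf{R}'$ by how many of their clusters are adjacent to $U$, bound $m_{k-1}+2m_k$ from below by a quantity close to $1$ using the degree condition, and conclude via $m_{k-1}+k\,m_k\ge m_{k-1}+2m_k$ and $|\mathbf{R}'|=|V(\mathcal{D})|/k$. Your extra step of subtracting the clusters discarded in Lemma~\ref{lemma:BDS} before counting is a sound refinement, since the paper tacitly identifies $|V(R)|$ with $k|\mathbf{R}'|$. The resulting additional $O(k^3 d)$ loss is harmless because $d\ll 1$.
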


\begin{proof}
	For a given $U \in V(\mathcal{D})$, let $m_1$, $m_2$ denote the fraction of cliques $\mathcal{R}=\{U_1, U_2, \ldots, U_k\} \in \mathbf{R}$ for which $(U, U_i) \in E(R)$ but $(U, U_j) \notin E(R)$ for all $i \neq j$ with $j \neq k$, and $(U, U_i) \in E(R)$ for all $i\in [1,k]$, respectively. By the same degree counting as in Proposition~\ref{prop:degreeU}, the degree condition of the reduced graph $d_R(U)\geq \left(1-\frac{1}{k} -d-2\varepsilon\right)\ell$ gives
	\[
	1-\frac{1}{k} - d-2\varepsilon \leq \frac{k-2}{k} +\frac{m_1}{k} +\frac{2m_2}{k}.
	\]
Therefore, $m_1+2m_2\geq 1-k(d+2\varepsilon)\geq \frac{1}{2}$, provided $d$ and $\varepsilon$ are sufficiently small.

By the definition of $\mathcal{D}$, a clique counted by $m_1$ gives one out-neighbor of $U$, while a clique counted by $m_2$ gives $k$ out-neighbors of $U$. Since $|\mathbf{R}'|=|V(\mathcal{D})|/k$, we obtain \begin{equation*}d_{\mathcal{D}}^{+}(U)=(m_1+km_2)|\mathbf{R}'|\geq
(m_1+2m_2)|\mathbf{R}'|\geq\frac12|\mathbf{R}'|\geq\frac{1}{2k}|V(\mathcal{D})|.\qedhere
\end{equation*}
\end{proof}

Let $J_L=\left\{U\in V(\mathcal{D}):d_R(U)<\left(1-\frac{1}{k}-d-2\varepsilon\right)|V(R)|\right\}$, and let $
J_H=V(\mathcal{D})\setminus J_L$. By the Ore-type condition on $R$, the set $J_L$ induces a clique in $R$. Thus any leftover vertices transferred into clusters of $J_L$ can be handled inside this clique structure, leaving at most $k$ additional vertices uncovered.

For $J_H$, Proposition~\ref{prop:out-degree-bound} and Lemma~\ref{lem:sinkset} imply that the high-degree part of the sink set has size at most $\frac{|V(\mathcal{D})|}{\frac{1}{2k}|V(\mathcal{D})|+1}\leq2k$.
We transfer leftover vertices as in Subsection~\ref{subsec:exvtx}. For each cluster $C$, let $\operatorname{extra}(C)$ be the number of vertices left uncovered by the tiling from Proposition~\ref{prop:single-clique-leftover}. If $C$ is not in the sink set, choose a directed path from $C$ to the sink set and move the extra vertices along this path. Each directed edge allows us to remove copies of $H$ using one vertex from the preceding cluster and $\omega$ vertices from each of the other $k-1$ clusters of the next clique. Since the total number of transferred vertices is bounded by a constant and $L_1$ is sufficiently large, this does not affect super-regularity.

After all extra vertices have been transferred, all leftover vertices are concentrated in at most $2k$ high-degree sink clusters, together with at most $k$ additional vertices arising from the low-degree clique structure. Hence, by Proposition~\ref{prop:single-clique-leftover}, the total number of uncovered vertices is at most $k(\omega-1)\cdot 2k+k+h\leq3k^2\omega+h$.

This proves Lemma~\ref{lemma:balanced-exceptional}.
\end{proof}

Combining Lemma~\ref{lemma:BDS} and Lemma~\ref{lemma:balanced-exceptional}, we obtain an $H$-tiling of $G$ covering all but at most $3k^2\omega+h$ vertices. Since $H$ is fixed, this is a constant depending only on $H$. This completes the proof of Theorem~\ref{thm:maintheorem} in the case $\sigma=\omega$.

\section{Concluding remarks}\label{sec:Remarks}

In this paper, we strengthen Theorem~\ref{thm:SZ2003H-matching} of Shokoufandeh and Zhao by replacing the minimum-degree condition with an Ore-type condition. This also resolves a conjecture proposed by K\"uhn, Osthus, and Treglown~\cite{kuhn2009ore}.
In the same paper, K\"uhn, Osthus, and Treglown raised several problems concerning the existence of perfect tilings in graphs with respect to the critical chromatic number. Relatedly, K\"uhn and Osthus~\cite{kuhn2006} obtained a minimum-degree condition for perfect tilings and provided a polynomial-time algorithm. It would be interesting to establish Ore-type conditions for perfect tilings involving the critical chromatic number, from both theoretical and algorithmic perspectives.

\bibliographystyle{abbrv}
\bibliography{AlonYuster}

\end{document}